\newtheorem{theorem}{Theorem}[section]
\newtheorem{proposition}[theorem]{Proposition}
\newtheorem{lemma}[theorem]{Lemma}
\newtheorem{corollary}[theorem]{Corollary}
\def\cH{\mathcal H}
\def\cO{\mathcal O}
\def\cX{\mathcal X}
\def\cY{\mathcal Y}
\def\Aut{\mbox{\rm Aut}}
\def\K{\mathbb{K}}
\def\deg{\mbox{\rm deg}}
\def\Aut{\mbox{\rm Aut}}
\newcommand{\aut}{\mbox{\rm Aut}}
\newcommand{\KK}{K}
\def\neg1{\text{\boldmath$1$}}
\def\neg1{\text{\boldmath$1$}}
\def\cH{\mathcal H}
\def\cX{\mathcal X}
\def\cY{\mathcal Y}
\def\KK{\mathbb{K}}
\def\FF{\mathbb{F}}
\newtheorem{remark}[theorem]{Remark}
\begin{document}

\author[M. Montanucci]{Maria Montanucci}
\address{Department of Applied Mathematics and Computer Science, Technical University of Denmark, 2800 Kongens Lyngby, Denmark}
\email{marimo@dtu.dk}
\author[G. Tizziotti]{Guilherme Tizziotti}
\address{Faculdade de Matemática, Universidade Federal de Uberlândia, 2121 Uberlândia, Brazil}
\email{guilhermect@ufu.br}
\author[G. Zini]{Giovanni Zini}
\address{Department of Physics, Informatics, and Mathematics, University of Modena and Reggio Emilia, 41125 Modena, Italy}
\email{giovanni.zini@unimore.it}

\title[The automorphism group of a family of maximal curves]{On the automorphism group of a family of maximal curves not covered by the Hermitian curve}

\maketitle
\begin{abstract}
In this paper we compute the automorphism group of the curves $\mathcal{X}_{a,b,n,s}$ and $\mathcal{Y}_{n,s}$ introduced in Tafazolian et al. \cite{TTT} as new examples of maximal curves which cannot be covered by the Hermitian curve. They arise as subcovers of the first generalized GK curve (GGS curve).
As a result, a new characterization of the GK curve, as a member of this family, is obtained.
\end{abstract}

{\bf Keywords:} maximal curve, GK curve, automorphism group.

{\bf MSC 2020:} 14H37, 11G20

\section{Introduction}

Let $\cX$ be a nonsingular, projective, geometrically irreducible algebraic curve of positive
genus g defined over a finite field $\mathbb{F}_q$ with $q$ elements and let $\cX(\mathbb{F}_q)$ be the set of its
$\mathbb{F}_q$-rational points. The curve $\cX$ is called $\mathbb{F}_q$-maximal if its number of $\mathbb{F}_q$-rational point
attains the Hasse-Weil upper bound, namely equals $2g\sqrt{q} + q + 1$. 
Clearly, maximal curves can only exist over fields whose cardinality is a perfect square. Apart for being
of theoretical interest as extremal objects, maximal curves over finite fields have attracted
a lot of attention in recent decades due to their applications to coding theory and cryptography. Maximal curves are indeed special for the structure of the so-called Weiestrass semigroup at one point, which is the main ingredient used in the literature to construct AG codes with good parameters.

The most important and well-studied example of a maximal curve is the so-called Hermitian curve $\mathcal{H}_q$ defined over $\mathbb{F}_{q^2}$ by the affine equation 

$$ \mathcal{H}_q: X^q + X = Y^{q+1}.$$ 
A well-known reason is that for fixed $q$, the curve $\mathcal{H}_q$ has the largest possible genus $g(\mathcal{H}_q) = q(q - 1)/2$ that an
$\mathbb{F}_{q^2}$-maximal curve can have. A result commonly attributed to Serre, see \cite[Proposition 6]{Serre}, gives that any curve which is $\mathbb{F}_{q^2}$-covered by an $\mathbb{F}_{q^2}$-maximal curve is $\mathbb{F}_{q^2}$-maximal. 

Therefore many maximal curves can be obtained by constructing subcovers of already known maximal curves, in particular subcovers of the Hermitian curve. For
a while it was speculated in the research community that perhaps all maximal curves could be obtained as subcovers of the Hermitian curve, but it was shown by Giulietti and Korchmáros that this is not the case, see \cite{GK}. 

Giulietti and Korchmáros constructed indeed a maximal curve over $\mathbb{F}_{q^6}$, today referred to as GK curve, which cannot be
covered by the Hermitian curve whenever $q$ is larger than $2$. Garcia, G\"uneri, and Stichtenoth, in \cite{GGS},
presented a new family of maximal curves over $\mathbb{F}_{q^{2n}}$ (where $n$ is odd), known as GGS curves, which
generalizes the GK curve (when $n=3$ the GGS curve and the GK curve coincide) and that is not Galois-covered by the Hermitian curve \cite{GGS,GMZ}.
Many applications of these curves in coding theory have been made in recent years, see e.g. \cite{zini}, \cite{BMZ}, \cite{CT_GK}, \cite{GKcodes}, \cite{HY3} and \cite{CT mGK}.

Another generalization of the GK curve over $\mathbb{F}_{q^{2n}}$ (again $n$ odd) has been introduced by Beelen and Montanucci in \cite{BM}, which is now known as BM curves. These curves are not Galois-covered by the Hermitian curve as well, unless $q=2$. Applications
of the BM curves to coding theory can be found in \cite{LV} and \cite{MP}.

Tafazolian, Teherán-Herrera, and Torres \cite{TTT} presented two further examples of maximal
curves over $\mathbb{F}_{q^{2n}}$ ($n$ odd), denoted by $\cX_{a,b,n,s}$ and $\cY_{n,s}$, that cannot be covered by the Hermitian curve. These
examples are again closely related to the GK curve, as $\cY_{3,1}$ is exactly the GK constructed in \cite{GK}.
They can be further seen as generalizations of the GGS, as $\cY_{n,1}$ is the GGS curve corresponding to the same parameter $n$. 

The aim of this paper is to compute the full automorphism groups of the curves $\cX_{a,b,n,s}$ and $\cY_{n,s}$ over the algebraic closure of $\mathbb{F}_{q^{2n}}$.
More precisely, the following are the two central results obtained in this paper (the precise definition of the subgroups involved in the statement are in Sections \ref{sec:prel} and \ref{sec:results}).

\begin{theorem}\label{th:1} 
Let $q$ be a prime power, $n \geq 3$ odd, $m:=(q^n+1)/(q+1)$, and $s$ a divisor of $m$ with $s\ne m$.
If $3\nmid n$ or $\frac{m}{s}\nmid(q^2-q+1)$, then ${\rm Aut}(\mathcal{Y}_{n,s})$ has order $q^3(q^2-1)m/s$ and is isomorphic to $S_{q^3}\rtimes C_{(q^2-1)\frac{m}{s}}$.
    If $3\mid n$ and $\frac{m}{s}\mid (q^2-q+1)$, then ${\rm Aut}(\mathcal{Y}_{n,s})$ has order $(q^3+1)q^3(q^2-1)m/s$ and is isomorphic to ${\rm PGU}(3,q)\rtimes C_{m/s}$. 
\end{theorem}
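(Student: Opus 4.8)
The plan is to bound $G:=\Aut(\mathcal{Y}_{n,s})$ from below by exhibiting explicit automorphisms, and then to prove that these exhaust $G$ by a Sylow- and ramification-theoretic analysis anchored at the distinguished totally ramified place $P_\infty$. For the lower bound I would read the automorphisms off the plane model recalled in Section~\ref{sec:prel}. They come in two layers: a $p$-group $S_{q^3}$ of order $q^3$ fixing $P_\infty$, built from the Hermitian-type translations $(x,y,z)\mapsto(x+a^q y+b,\,y+a,\,z)$ with $a\in\mathbb{F}_{q^2}$ (note that $y^{q^2}-y$, hence $z$, is fixed since $a^{q^2}=a$); and a cyclic group $C_{(q^2-1)m/s}$ combining the $\mathbb{F}_{q^2}^\times$-scalings $y\mapsto\lambda y,\ x\mapsto\lambda^{q+1}x$ (contributing the factor $q^2-1$) with the order-$\tfrac{m}{s}$ action on the Kummer variable $z$. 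Together these generate $S_{q^3}\rtimes C_{(q^2-1)m/s}$ of order $q^3(q^2-1)m/s$, all fixing $P_\infty$. When $3\mid n$ (so that $q^2-q+1\mid m$, since $n/3$ is odd) and $\tfrac{m}{s}\mid(q^2-q+1)$, I would exhibit additional automorphisms not fixing $P_\infty$ which, together with the above, generate a copy of $\PGU(3,q)$ acting as on the Hermitian curve, yielding the claimed $\PGU(3,q)\rtimes C_{m/s}$.

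For the upper bound, observe first that $S_{q^3}$ is a Sylow $p$-subgroup of $G$: since $q^2-1$, $m/s$ and $q^3+1$ are all prime to $p$, the $p$-part of each claimed order is exactly $q^3$. The key local fact I would establish is that $S_{q^3}$ fixes $P_\infty$ and that this is its \emph{unique} fixed point, which follows because the different of $\mathcal{Y}_{n,s}\to\mathcal{Y}_{n,s}/S_{q^3}$ is concentrated at $P_\infty$, so by the Hurwitz formula $P_\infty$ is the only ramified place. Consequently any $g\in N_G(S_{q^3})$ sends the unique fixed point of $S_{q^3}$ to the unique fixed point of $gS_{q^3}g^{-1}=S_{q^3}$, i.e.\ fixes $P_\infty$, giving $N_G(S_{q^3})\le G_{P_\infty}$. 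Since the stabilizer $G_{P_\infty}$ has $S_{q^3}$ as its normal Sylow $p$-subgroup (the wild inertia group), we also have $G_{P_\infty}\le N_G(S_{q^3})$, whence $G_{P_\infty}=N_G(S_{q^3})$.

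Next I would pin down $G_{P_\infty}$ exactly. As the stabilizer of a place, it is an extension of the normal $p$-group $S_{q^3}$ by a cyclic group acting faithfully on the cotangent space at $P_\infty$; I would bound the order of this cyclic complement by $(q^2-1)m/s$ by using the Weierstrass semigroup $H(P_\infty)$ to constrain the admissible eigenvalue on the generators of $H(P_\infty)$. Combined with the explicit automorphisms of the first step, this yields $G_{P_\infty}=S_{q^3}\rtimes C_{(q^2-1)m/s}$ \emph{unconditionally}, so the theorem reduces entirely to determining the $G$-orbit $\mathcal{O}$ of $P_\infty$. Its length equals the number of Sylow $p$-subgroups $[G:G_{P_\infty}]\equiv1\pmod p$, and the map sending a Sylow $p$-subgroup to its unique fixed point is a $G$-equivariant bijection onto $\mathcal{O}$. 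If $P_\infty$ is fixed, then $G=G_{P_\infty}$ and we land in the generic case.

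The remaining task is to decide when $\mathcal{O}$ is nontrivial, and this is the step I expect to be the main obstacle. In the nontrivial case I would show that the incidence structure carried by $\mathcal{O}$, together with the Borel-type stabilizer $S_{q^3}\rtimes C_{q^2-1}\le G_{P_\infty}$, forces the subgroup generated by all Sylow $p$-subgroups to be isomorphic to $\PGU(3,q)$ with $|\mathcal{O}|=q^3+1$ (a unital configuration), after which $G\cong\PGU(3,q)\rtimes C_{m/s}$ by an order count. The crux is the equivalence between the existence of this long orbit and the arithmetic condition ``$3\mid n$ and $\tfrac{m}{s}\mid(q^2-q+1)$''. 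One direction is the $\PGU(3,q)$ constructed in the lower-bound step; the hard direction is nonexistence, namely proving that when the divisibility fails $P_\infty$ must be fixed by all of $G$. I would handle this by showing that any automorphism moving $P_\infty$ would force $\mathcal{Y}_{n,s}$ to dominate a Hermitian quotient incompatible with the Kummer degree $m/s$ unless the stated congruences hold, invoking the classification of curves with large automorphism group to exclude every intermediate possibility.
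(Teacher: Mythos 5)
Your proposal follows the same broad skeleton as the paper (exhibit $G=S_{q^3}\rtimes C_{(q^2-1)m/s}$ fixing $P_\infty$, prove it is the full stabilizer of $P_\infty$, then decide when $P_\infty$ can be moved), but at both places where the real work lies you have gaps rather than arguments. First, your claim that $S_{q^3}$ is a Sylow $p$-subgroup of ${\rm Aut}(\mathcal{Y}_{n,s})$ is circular: you justify it by noting that the $p$-part of ``each claimed order'' is $q^3$, but the claimed orders are the conclusion of the theorem. What must actually be proved is that the wild inertia $\tilde S$ at $P_\infty$ equals $S_{q^3}$ and that the tame complement $\tilde C$ has order exactly $(q^2-1)m/s$. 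The paper does this by computing the higher ramification data of $E_q$ and $S_{q^3}$ at $P_\infty$ via two Hurwitz genus formula computations, showing $i(\gamma)=2$ for any hypothetical $\gamma\in\tilde S\setminus S_{q^3}$, and then passing to the quotient curves: $\mathcal{Y}_{n,s}/C_{m/s}=\mathcal{H}_q$ (whose point stabilizer in ${\rm PGU}(3,q)$ bounds $\tilde C$) and $\mathcal{Y}_{n,s}/E_q:\ z^{m/s}=y^{q^2}-y$, whose full automorphism group is known from the cited work of Bonini--Montanucci--Zini. Your substitute --- bounding the cyclic complement by an ``eigenvalue on the cotangent space constrained by $H(P_\infty)$'' --- is not a complete argument: the faithful character on the cotangent space does not by itself bound $|\tilde C|$ by $(q^2-1)m/s$, and you give no mechanism for the $p$-part at all.

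Second, the step you yourself flag as the main obstacle --- showing that when $3\nmid n$ or $\frac{m}{s}\nmid(q^2-q+1)$ every automorphism fixes $P_\infty$ --- is left as a statement of intent, and the route you sketch would not obviously close it. Invoking ``the classification of curves with large automorphism group'' does not exclude intermediate possibilities: that classification (the $|G|<8g^3$ statement) only lists four exceptional curves and says nothing about which subgroups can occur here. The paper instead proves, \emph{independently of the arithmetic condition}, that ${\rm Aut}(\mathcal{Y}_{n,s})$ preserves $O\cup\{P_\infty\}$ (equivalently, $C_{m/s}$ is normal); this requires an exact-differential/Weierstrass-gap argument separating the points of $O\cup\{P_\infty\}$ from all others when the genus is large, and otherwise a case analysis of the short orbits (uniqueness of the non-tame orbit, the three admissible short-orbit patterns for $|G|>84(g-1)$, and Hurwitz genus formula computations producing a contradiction in each case). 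Only then does the arithmetic condition enter, and it does so through a mechanism absent from your proposal: the order of $[P_{(0,0,0)}-P_\infty]$ in the Picard group must divide $\gcd((q+1)m/s,\,q^3+1)$ while also lying in $H(P_\infty)=\langle qm/s,(q+1)m/s,q^3\rangle$ and being coprime to $q$, which forces $\frac{m}{s}\mid(q^2-q+1)$ and $3\mid n$; conversely, the lift of $\tau:(x,y)\mapsto(1/x,y/x)$ is constructed explicitly as $z\mapsto\xi\rho_0 z$, where $\rho_0$ with $(\rho_0)=(q^3+1)(P_\infty-P_{(0,0,0)})$ exists by the fundamental equation because $\mathcal{Y}_{n,s}$ is then $\mathbb{F}_{q^6}$-maximal (a quotient of the GK curve). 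Your lower-bound ${\rm PGU}(3,q)$ is likewise asserted rather than constructed, so both directions of the crucial equivalence are missing.
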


\begin{theorem}\label{th:2}
Let $q=p^a$ be a prime power, $n \geq 3$ odd, $m:=(q^n+1)/(q+1)$, $s$ a divisor of $m$, and $b$ a divisor of $a$. Assume that $b<a$ or $q^2\nmid(\frac{m}{s}-1)$. Then the automorphism group of $\cX_{a,b,n,s}$ has order $\frac{q^3}{\bar{q}}(q+1)(\bar{q}-1)\frac{m}{s}$ and is isomorphic to $(S_{q^3}/E_{\bar{q}})\rtimes C_{(q+1)(\bar{q}-1)m/s}$. 
\end{theorem}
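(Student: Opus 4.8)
The plan is to produce a subgroup $G\le{\rm Aut}(\cX_{a,b,n,s})$ of the asserted structure and order by exhibiting explicit automorphisms on the defining model, and then to prove that ${\rm Aut}(\cX_{a,b,n,s})$ cannot be larger by showing it fixes a distinguished place. Write $\bar q=p^b$. On the model of $\cX_{a,b,n,s}$ inherited from the first GGS curve, the curve carries: a (generally non-abelian) $p$-group coming from the translations $x\mapsto x+c$ compatible with the $\mathbb{F}_{\bar q}$-reduction of the Artin--Schreier coordinate, of order $q^3/\bar q$ and isomorphic to $S_{q^3}/E_{\bar q}$; a cyclic group of order $\bar q-1$ of diagonal $\mathbb{F}_{\bar q}^\times$-scalings surviving the reduction; a cyclic group of order $q+1$ acting on the $y$-coordinate; and a cyclic group of order $m/s$ acting on the Kummer coordinate. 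I would check these normalise one another so as to generate $G\cong(S_{q^3}/E_{\bar q})\rtimes C_{(q+1)(\bar q-1)m/s}$ of order $\frac{q^3}{\bar q}(q+1)(\bar q-1)\frac{m}{s}$, and that all of $G$ fixes the unique place $P_\infty$ over $x=\infty$.

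The conceptual source of these automorphisms is that $\cX_{a,b,n,s}$ is the quotient $\cY_{n,s}/E_{\bar q}$ by the order-$\bar q$ subgroup $E_{\bar q}$ of the centre $(\mathbb{F}_q,+)$ of $S_{q^3}$ determined by $\mathbb{F}_{\bar q}\subseteq\mathbb{F}_q$, which is well defined since $b\mid a$. Comparing with the computation behind Theorem~\ref{th:1}, the normaliser $N_{{\rm Aut}(\cY_{n,s})}(E_{\bar q})$ descends onto $G$ with kernel $E_{\bar q}$: the cyclic complement of $S_{q^3}$ acts on the centre through $\mathbb{F}_q^\times$, and the largest subgroup preserving the $\mathbb{F}_{\bar q}$-line is $\mathbb{F}_{\bar q}^\times$, which replaces the factor $C_{q-1}$ of $C_{q^2-1}=C_{(q+1)(q-1)}$ by $C_{\bar q-1}$ while leaving the $C_{q+1}$- and $C_{m/s}$-factors intact. (When $\cY_{n,s}$ is itself in the exceptional ${\rm PGU}(3,q)$-case of Theorem~\ref{th:1}, the additional automorphisms fail to normalise $E_{\bar q}$ unless $b=a$, so they do not enlarge $G$ under the standing hypothesis.) This both re-derives the structure of $G$ and identifies it with ${\rm Stab}_{{\rm Aut}(\cX_{a,b,n,s})}(P_\infty)$.

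The main work is the reverse inclusion ${\rm Aut}(\cX_{a,b,n,s})\subseteq G$, i.e. that every automorphism fixes $P_\infty$. I would first pin down a Sylow $p$-subgroup $S$ of ${\rm Aut}(\cX_{a,b,n,s})$: from the genus $g(\cX_{a,b,n,s})$ and the Stichtenoth--Nakajima estimates for the order of the wild inertia and of $p$-subgroups fixing a place, together with the ramification of $\cX_{a,b,n,s}\to\cX_{a,b,n,s}/S$, one forces $|S|=q^3/\bar q$, and the single nontrivial jump in the higher ramification filtration forces $S$ to fix exactly one place, necessarily $P_\infty$. It then suffices to show $S\trianglelefteq{\rm Aut}(\cX_{a,b,n,s})$, equivalently that $P_\infty$ is not moved.

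If $P_\infty$ lay in a nontrivial orbit, then ${\rm Aut}(\cX_{a,b,n,s})$ would have a non-normal Sylow $p$-subgroup, and the short-orbit and ramification data, fed into the classification of curves admitting a large $p$-group fixing a point, would force a three-dimensional linear group to appear, the analogue for $\cX_{a,b,n,s}$ of the ${\rm PGU}(3,q)$-case of Theorem~\ref{th:1}. A direct analysis shows this configuration occurs exactly when $b=a$ and $q^2\mid(\frac{m}{s}-1)$, i.e. when the reduction is trivial on the centre and the Kummer parameter meets the Hermitian arithmetic; the standing hypothesis $b<a$ or $q^2\nmid(\frac{m}{s}-1)$ excludes precisely this case. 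Hence $P_\infty$ is globally fixed, ${\rm Aut}(\cX_{a,b,n,s})={\rm Stab}(P_\infty)=G$, and the order and structure follow. I expect the delicate points to be the genus and ramification bookkeeping that fixes $|S|=q^3/\bar q$, and the verification that the excluded arithmetic condition is exactly the obstruction to $P_\infty$ being a global fixed place.
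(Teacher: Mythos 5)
Your construction of $G$ is essentially the paper's: $\cX_{a,b,n,s}=\cY_{n,s}/E_{\bar q}$ and $G=\mathrm{N}_{{\rm Aut}(\cY_{n,s})}(E_{\bar q})/E_{\bar q}\cong(S_{q^3}/E_{\bar q})\rtimes C_{(q+1)(\bar q-1)m/s}$. But the two steps that carry the whole weight of the theorem are gapped.

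First, the identification $G={\rm Aut}(\cX_{a,b,n,s})_{P_\infty}$ does not follow from the normalizer computation: automorphisms of the quotient curve need not lift to $\cY_{n,s}$, so the descent only gives the inclusion $G\subseteq{\rm Aut}(\cX_{a,b,n,s})_{P_\infty}$. For the reverse inclusion, your proposed tool fails: Stichtenoth/Nakajima-type estimates bound a $p$-group fixing a place by a function of the genus, and $g(\cX_{a,b,n,s})\approx q^{n+2}/(2s\bar q)$ can be arbitrarily larger than $q^3/\bar q$ (take $s=1$ and $n$ large), so no such estimate can \emph{force} $|\tilde S|=q^3/\bar q$; it can only give upper bounds far above the true value. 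The paper instead computes the exact ramification filtration at $P_\infty$ (Propositions \ref{contr1X} and \ref{contr2X}), uses the Hurwitz formula to show $i(\sigma)=2$ for every $\sigma\in\tilde S\setminus(S_{q^3}/E_{\bar q})$, deduces that $E_q/E_{\bar q}$ is the last nontrivial ramification group and hence normal in $\tilde S$ (Lemma \ref{highnormal}), and then bounds $\tilde S/(E_q/E_{\bar q})$ by the known Sylow $p$-order $q^2$ of the automorphism group of the separated-polynomial curve $z^{m/s}=y^{q^2}-y$, quoted from \cite{BMZsep} (Lemma \ref{lem:pX}). This external input has no substitute in your sketch. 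Moreover you never bound the tame part of the stabilizer at all: the equality $\tilde C=C_{(q+1)(\bar q-1)m/s}$ requires \cite[Theorem 3.3]{BMZsep} applied to the trace curve $cy^{q+1}=\mathrm{Tr}_{q/\bar{q}}(x)$ (Lemma \ref{lem:complX}).

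Second, the global fixed-point step. Appealing to an unspecified ``classification of curves admitting a large $p$-group fixing a point'' and then asserting that ``a direct analysis shows this configuration occurs exactly when $b=a$ and $q^2\mid(\frac{m}{s}-1)$'' is to assume precisely what must be proven; no such analysis is sketched. The paper's route is concrete: a contradiction argument on orbits (write $|O_\infty|=1+\ell|O|+k|G|$ with $k\geq1$, note $|{\rm Aut}(\cX_{a,b,n,s})|>|G|^2>84(g-1)$, and kill the three possible short-orbit patterns of Theorem \ref{Hurwitz} by Hurwitz-formula and orbit-stabilizer arithmetic) shows that ${\rm Aut}(\cX_{a,b,n,s})$ preserves $O\cup\{P_\infty\}$; then $C_{m/s}$ is normal (Lemma \ref{lem:normX}) and ${\rm Aut}(\cX_{a,b,n,s})/C_{m/s}$ acts on the quotient $cy^{q+1}=\mathrm{Tr}_{q/\bar{q}}(x)$, which for $b<a$ is \emph{not} the Hermitian curve and whose automorphism group fixes its point at infinity (\cite[Theorem 12.11]{HKT}), so ${\rm Aut}(\cX_{a,b,n,s})$ fixes $P_\infty$ (Corollary \ref{cor:actX}). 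In particular your reading of the hypothesis is off: for $b<a$ no arithmetic condition on $m/s$ is needed and no linear-group alternative can arise; the condition $q^2\nmid(\frac{m}{s}-1)$ matters only in the degenerate case $b=a$, where $\cX_{a,a,n,s}$ is the plane curve $z^{m/s}=y^{q^2}-y$ and the paper settles the statement immediately by quoting \cite[Theorem 3.2]{BMZsep}.
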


Theorems \ref{th:1} and \ref{th:2} provide a new characterization of the GK curve as a member of the family of maximal curves $\mathcal{Y}_{n,s}$, $\mathcal{X}_{a,b,n,s}$ given in \cite{TTT}. 
Indeed, Theorems \ref{th:1} and \ref{th:2} show which members in that family admit an automorphism group isomorphic to ${\rm PGU}(3,q)$, i.e. when the full automorphism group of the underlying Hermitian curve $\mathcal{H}_q$ can be completely lifted: they are exactly the GK curve $\mathcal{GK}$, together with its quotients $\mathcal{GK}/C$ over a subgroup $C$ of the Galois group $C_{q^2-q+1}$ of $\mathcal{GK}\to\mathcal{H}_q$.
Theorem \ref{th:1} also provides a different proof of the structure of the automorphism group of the GGS curve with respect to the ones given in \cite{GOS} and \cite{GMP}.

The paper is organized as follows. In Chapter 2 the necessary background on maximal curves, their automorphism groups and general results in group theory are recalled. Chapter 3 contains the proofs of the two aforementioned theorems.

\section{Preliminary results}\label{sec:prel}

\subsection{Automorphism groups of algebraic curves}

In this paper, $\cX$ stands for a (projective, geometrically irreducible, non-singular) algebraic curve of genus $g=g(\cX) \ge 2$ defined over an algebraically closed field $\K$ of positive characteristic $p$. Let $\aut(\cX)$ be the group of all automorphisms of $\cX$. The assumption $g(\cX)\geq 2$ ensures that $\aut(\cX)$ is finite. However the classical Hurwitz bound
 $|\aut(\cX)| \leq 84(g(\cX)-1)$ for complex curves fails in positive characteristic, and there exist four families of curves satisfying $|\aut(\cX)|\geq 8g(\cX)^3$; see \cite{stichtenoth1973II}, \cite{henn1978}, and \cite[Section 11.12]{HKT}. 

For a subgroup $G$ of $\aut(\cX)$, let $\bar \cX$ denote a non-singular model of $\K(\cX)^G$, that is,
a (projective, geometrically irreducible, non-singular) algebraic curve with function field $\K(\cX)^G$, where $\K(\cX)^G$ is the fixed field of $G$, i.e. the subfield of $\K(\cX)$ fixed elementwise by every element in $G$. Usually, $\bar \cX$ is called the
quotient curve of $\cX$ by $G$ and denoted by $\cX/G$. The field extension $\K(\cX)/\K(\cX)^G$ is Galois of degree $|G|$.

Let $\Phi$ be the natural covering $\cX \rightarrow \bar{\cX}$, where $\bar{\cX}=\cX/G$. A point $P\in\cX$ is a ramification point of $G$ if the stabilizer $G_P$ of $P$ in $G$ is nontrivial; the ramification index $e_P$ is $|G_P|$; a point $\bar{Q}\in\bar{\cX}$ is a branch point of $G$ if there is a ramification point $P\in \cX$ such that $\Phi(P)=\bar{Q}$; the ramification (branch) locus of $G$ is the set of all ramification (branch) points. The $G$-orbit of $P\in \cX$ is the subset
$o=\{g(P)\in\mathcal{X}\colon g\in G\}$ of $\cX$, and it is {\em long} if $|o|=|G|$, otherwise $o$ is {\em short}. For a point $\bar{Q}\in\bar{\cX}$, the $G$-orbit $o$ lying over $\bar{Q}$ consists of all points $P\in\cX$ such that $\Phi(P)=\bar{Q}$. If $P\in o$ then $|o|=|G|/|G_P|$ and hence $\bar{Q}$ is a branch point if and only if $o$ is a short $G$-orbit. It may be that $G$ has no short orbits. This is the case if and only if every non-trivial element in $G$ is fixed--point-free on $\cX$, that is, the covering $\Phi$ is unramified. On the other hand, $G$ has a finite number of short orbits.

For a non-negative integer $i$, the $i$-th ramification group of $\cX$
at $P$ is denoted by $G_P^{(i)}$ (or $G_i(P)$ as in \cite[Chapter
IV]{serre1979})  and defined to be
$$G_P^{(i)}=\{\alpha \in G_P \;\colon\; v_P(\alpha(t)-t)\geq i+1\}, $$ where $t$ is a uniformizing element (local parameter) at
$P$. 
The main properties of the subgroup chain $G_P^{(0)} \supseteq G_P^{(1)} \supseteq G_P^{(2)} \supseteq \ldots$ are collected in the following lemma.
\begin{lemma}{\rm \cite[Theorem 11.49 and Theorem 11.74]{HKT}}  \label{highnormal}
\begin{enumerate}
\item $G_P^{(0)}=G_P=S \rtimes H$, where $S$ is a $p$-group and $H$ is a cyclic group of order not divisible by $p$.
\item $G_P^{(1)}=S$ is the unique Sylow $p$-subgroup (and maximal normal subgroup) of $G_P$.
\item For every $i \geq 1$, $G_P^{(i)}$ is normal in $G_P$ and the quotient group $G_P^{(i+1)}/G_P^{(i)}$ is elementary abelian.
\end{enumerate}
\end{lemma}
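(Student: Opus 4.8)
The plan is to establish all three statements simultaneously by studying the successive quotients of the ramification filtration $G_P^{(0)}\supseteq G_P^{(1)}\supseteq\cdots$ through explicit injective homomorphisms into the multiplicative and additive groups of the residue field $\K$, in the spirit of the classical local-field treatment in \cite[Chapter IV]{serre1979}. I would first record the equality $G_P^{(0)}=G_P$: any $\alpha\in G_P$ fixes $P$, so $\alpha(t)$ is again a uniformizing element at $P$ and $v_P(\alpha(t))=v_P(t)=1$, whence $v_P(\alpha(t)-t)\geq 1$, which is exactly the defining condition at level $i=0$. Here it is essential that $\K$ is algebraically closed, so that the decomposition group coincides with the inertia group and no residue-field extension intervenes.

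The core of the argument is the construction, for each $i\geq 0$, of a homomorphism detecting membership in the next group of the filtration. At level $i=0$ I would define $\theta_0\colon G_P\to\K^\times$ by $\theta_0(\alpha)=\overline{\alpha(t)/t}$, the residue of the unit $\alpha(t)/t$, and check that it does not depend on the chosen uniformizer $t$, that it is a group homomorphism (using that every $\alpha\in G_P$ acts trivially on the residue field $\K$), and that its kernel is precisely $G_P^{(1)}$; this realizes $G_P/G_P^{(1)}$ as a subgroup of $\K^\times$. At each level $i\geq 1$ I would instead define $\theta_i\colon G_P^{(i)}\to(\K,+)$ by $\theta_i(\alpha)=\overline{(\alpha(t)-t)/t^{\,i+1}}$ and verify, again, that it is well defined, that it is now an \emph{additive} homomorphism, and that its kernel is $G_P^{(i+1)}$; this realizes each successive quotient $G_P^{(i)}/G_P^{(i+1)}$ as a subgroup of $(\K,+)$.

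From these embeddings the three assertions follow formally. Since $(\K,+)$ has exponent $p$, every successive quotient $G_P^{(i)}/G_P^{(i+1)}$ with $i\geq 1$ is elementary abelian, which is statement (3); as the filtration is finite, say $G_P^{(N)}=\{1\}$ for $N$ large, iterating these $p$-group extensions shows that $S:=G_P^{(1)}$ is a $p$-group. On the other hand $G_P/G_P^{(1)}$ injects into $\K^\times$, and a finite subgroup of the multiplicative group of a field is cyclic and, in characteristic $p$, of order prime to $p$; hence $H:=G_P/G_P^{(1)}$ is cyclic of order coprime to $p$. Therefore $|G_P|=|S|\cdot|H|$ with $\gcd(|S|,|H|)=1$, so $S$ is the unique Sylow $p$-subgroup of $G_P$, giving (2), and Schur--Zassenhaus produces a complement, yielding the semidirect decomposition $G_P=S\rtimes H$ of (1). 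Normality of each $G_P^{(i)}$ in $G_P$ is seen directly from conjugation-invariance of the valuation: for $\beta\in G_P$ one has $v_P(\beta(f))=v_P(f)$, so the condition $v_P(\alpha(t)-t)\geq i+1$ is preserved under $\alpha\mapsto\beta\alpha\beta^{-1}$, whence $\beta G_P^{(i)}\beta^{-1}=G_P^{(i)}$.

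I expect the main obstacle to lie entirely in the middle step: checking that each $\theta_i$ is independent of the choice of uniformizer, that it is genuinely a homomorphism, and that its kernel is exactly the next ramification group. The delicate point is the change of behaviour between level $0$, where the natural character is multiplicative, and levels $\geq 1$, where it becomes additive; making these computations clean, and in particular identifying the kernels precisely, is the technical heart on which the rest of the proof rests.
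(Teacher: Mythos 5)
The paper never proves this lemma: it is quoted as background, with the proof delegated to \cite[Theorems 11.49 and 11.74]{HKT}, whose own arguments follow the classical local-field treatment of \cite[Chapter IV]{serre1979}. Your proposal reconstructs precisely that textbook proof — the multiplicative character $\theta_0(\alpha)=\overline{\alpha(t)/t}$ with kernel $G_P^{(1)}$ embedding $G_P/G_P^{(1)}$ into $\K^\times$ (hence cyclic of order prime to $p$), the additive characters $\theta_i(\alpha)=\overline{(\alpha(t)-t)/t^{i+1}}$ with kernels $G_P^{(i+1)}$ embedding the higher quotients into $(\K,+)$ (hence elementary abelian), and then unique Sylow $p$-subgroup plus Schur--Zassenhaus for the splitting $G_P=S\rtimes H$. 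So your route is correct in substance and simply supplies, self-containedly, what the paper buys by citation. Three points should be tightened to make it complete. First, you assert the filtration terminates, $G_P^{(N)}=\{1\}$; this requires the observation that an automorphism fixing $P$, the constants, and a local parameter $t$ acts trivially on the completion $\K[[t]]$ and hence on $\K(\cX)$, so $v_P(\alpha(t)-t)<\infty$ for every $\alpha\neq\mathrm{id}$. Second, your normality argument is really a reduction to the independence of $G_P^{(i)}$ from the choice of uniformizer: writing $t'=\beta^{-1}(t)$ one has $\beta\alpha\beta^{-1}(t)-t=\beta\bigl(\alpha(t')-t'\bigr)$, so conjugation-invariance is exactly the uniformizer-independence check you defer to the ``technical heart''; that is legitimate, but it should be flagged as the same check doing double duty. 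Third, the parenthetical claim that $S$ is a ``maximal normal subgroup'' must be read as ``maximal normal $p$-subgroup'': as literally stated it fails whenever the cyclic complement $H$ has composite order (any subgroup between $S$ and $G_P$ is then normal, since $G_P/S$ is abelian), and in the intended reading it follows immediately from the uniqueness of the Sylow $p$-subgroup, which your argument does establish.
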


Let $\bar{g}$ be the genus of the quotient curve $\cX/G$. The \emph{Hurwitz
genus formula} (also called Riemann-Hurwitz formula, see \cite[Theorem 3.4.13]{stichtenoth}) gives the following equation:
    \begin{equation}
    \label{eq1}
2g-2=|G|(2\bar{g}-2)+\sum_{P\in \cX} d_P,
    \end{equation}
    where the different $d_P$ at $P$ is given by
\begin{equation}
\label{eq1bis}
d_P= \sum_{i\geq 0}(|G_P^{(i)}|-1),
\end{equation}
see \cite[Theorem 11.70]{HKT}. Clearly the above contribution $\sum_{P\in \cX} \sum_{i\geq 0}(|G_P^{(i)}|-1)$ can be re-written by summing with respect to the elements of $ \alpha \in G$ and counting the number of $P's$ and $i's$ such that $\alpha \in G_P^{(i)}$. Doing so one can re-write the formula above as
    \begin{equation}
    \label{eqi}
2g-2=|G|(2\bar{g}-2)+\sum_{\alpha \in G \setminus \{id\}} i(\alpha),
    \end{equation}
where $i(\alpha)$ is called the contribution of the automorphism $\alpha$ to the covering $\Phi$.

Let $\gamma$ be the $p$-rank of $\cX$, and let $\bar{\gamma}$ be the $p$-rank of the quotient curve $\cX/G$.
A formula relating $\gamma$ and $\bar \gamma$ is known whenever $G$ is a $p$-group: in this case, the \emph{Deuring-Shafarevich formula} states that
\begin{equation}
    \label{eq2deuring}
\gamma-1={|G|}(\bar{\gamma}-1)+\sum_{i=1}^k (|G|-\ell_i),
    \end{equation}
where $\ell_1,\ldots,\ell_k$ are the sizes of the short orbits of $G$; see \cite{sullivan1975} or \cite[Theorem 11.62]{HKT}.

A subgroup of $\aut(\cX)$ is  a prime-to-$p$ group, or a $p'$-group, if its order is prime to $p$. A subgroup $G$ of $\aut(\cX)$ is {\em{tame}} if the $1$-point stabilizer in $G$ of any point of $\cX$ is a $p'$-group. Otherwise, $G$ is {\em{non-tame}} (or {\em{wild}}). By \cite[Theorem 11.56]{HKT}, if $|G|>84(g(\cX)-1)$ then $G$ is non-tame. 
An orbit $o$ of $G$ is {\em{tame}} if $G_P$ is a $p'$-group for every $P\in o$. 

The following lemma gives a strong restriction to the action of the Sylow $p$-subgroup of the $1$-point stabilizer when $\cX$ is a maximal curve. Actually, it holds for the class of curves with $p$-rank $\gamma=0$, which contains the maximal curves, see e.g. \cite[Theorem 9.76]{HKT}.
 
 \begin{lemma}{\rm \cite[Proposition 3.8, Theorem 3.10]{GSY}} \label{prankpel} Let $\mathcal{X}$ be an 
$\mathbb{F}_{q^2}$-maximal curve of genus $g\geq2$. Then the automorphism group $\aut(\mathcal{X})$ fixes the set 
$\mathcal{X}(\mathbb{F}_{q^2})$ of $\mathbb{F}_{q^2}$-rational points$.$ Also, automorphisms of $\mathcal{X}$ over the algebraic 
closure of $\mathbb{F}_{q^2}$ are always defined over $\mathbb{F}_{q^2}.$ 
   \end{lemma}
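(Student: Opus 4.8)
The plan is to reduce both assertions to the single claim that every $\sigma\in\aut(\cX)$ commutes with the $\mathbb{F}_{q^2}$-Frobenius morphism $\Phi\colon\cX\to\cX$ (which exists because $\cX$ is defined over $\mathbb{F}_{q^2}$). Indeed, $\sigma\Phi=\Phi\sigma$ means precisely that $\sigma$ is fixed by conjugation by Frobenius, i.e. $\sigma$ is $\mathbb{F}_{q^2}$-rational; and it forces $\sigma$ to carry the fixed-point set $\cX(\mathbb{F}_{q^2})=\{P\colon\Phi(P)=P\}$ into itself. So everything comes down to establishing commutativity.

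First I would record the arithmetic consequence of maximality. Since $\#\cX(\mathbb{F}_{q^2})=q^2+1+2gq$, the point count forces all inverse roots of the $L$-polynomial to equal $-q$, so the $q^2$-Frobenius acts on $\mathrm{Pic}^0(\cX)$ as multiplication by $-q$. Fixing one rational point $P_0\in\cX(\mathbb{F}_{q^2})$ (which exists, as $\cX$ is maximal) and using the Abel--Jacobi map based at $P_0$, this yields the fundamental linear equivalence
\[
\Phi(P)+qP\sim (q+1)P_0\qquad\text{for every }P\in\cX .
\]
Writing $\mathcal D:=(q+1)P_0$, I then apply $\sigma$ to this relation and also substitute $\sigma(P)$ for $P$ in it; subtracting the two and cancelling the common term $q\sigma(P)$ gives
\[
\sigma\Phi(P)-\Phi\sigma(P)\sim \sigma(\mathcal D)-\mathcal D=:\mathcal E
\]
for all $P$, where $\mathcal E$ is a \emph{fixed} divisor class of degree $0$, independent of $P$.

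The heart of the argument is to prove $\mathcal E=0$. Consider the two morphisms $P\mapsto[\sigma\Phi(P)]$ and $P\mapsto[\Phi\sigma(P)]$ from $\cX$ to $\mathrm{Pic}^1(\cX)$. Since $\Phi$ is bijective on points and $\sigma$ is an automorphism, both $\sigma\Phi$ and $\Phi\sigma$ are surjective onto $\cX$, so each of these morphisms has image equal to the Abel--Jacobi image $W:=\{[R]\colon R\in\cX\}\cong\cX$ inside $\mathrm{Pic}^1(\cX)$. The displayed relation says exactly that the first morphism is the translate by $\mathcal E$ of the second, whence $W=W+\mathcal E$. For a curve of genus $g\ge2$ the translation stabilizer of $W$ is trivial: a nonzero $\mathcal E$ with $W+\mathcal E=W$ would define an automorphism $\phi$ of $\cX$ with $\phi(Q)\sim Q+\mathcal E$, which acts as the identity on $\mathrm{Pic}^0(\cX)$ and hence must be the identity, forcing $\mathcal E=0$. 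Therefore $\mathcal E=0$, so $[\sigma\Phi(P)]=[\Phi\sigma(P)]$ for all $P$; as two linearly equivalent points on a curve of positive genus coincide, we get $\sigma\Phi(P)=\Phi\sigma(P)$ for every $P$, i.e. $\sigma\Phi=\Phi\sigma$.

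I expect the main obstacle to be this rigidity step $\mathcal E=0$: one must argue carefully that translation by a nonzero class cannot preserve $W$ when $g\ge2$, which ultimately rests on the injectivity of $\aut(\cX)\to\aut(\mathrm{Pic}^0(\cX))$ in that range. The remaining ingredients---reading off the Frobenius eigenvalues from the maximal point count and deriving $\Phi(P)+qP\sim(q+1)P_0$---are standard, and once $\sigma\Phi=\Phi\sigma$ is established both conclusions of the lemma follow immediately.
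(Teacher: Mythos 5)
The paper offers no proof of this lemma at all: it is imported verbatim from the literature, with the proof deferred to \cite{GSY} (Proposition 3.8 and Theorem 3.10 there). So the only meaningful comparison is with the cited argument, and on that score your proposal is correct and runs on the same two pillars as the standard treatment: (i) maximality forces the $q^2$-Frobenius to act as the scalar $-q$ on $\mathrm{Pic}^0(\mathcal{X})$ --- equivalently the fundamental equation $\Phi(P)+qP\sim(q+1)P_0$, which this paper itself invokes in Lemma \ref{fundeq}, quoting \cite[Section 9.8]{HKT} --- and (ii) a Torelli-type rigidity statement, namely that $\aut(\mathcal{X})\to\aut(\mathrm{Pic}^0(\mathcal{X}))$ is injective for $g\ge2$. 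Your divisor-level route (deriving the commutation $\sigma\Phi=\Phi\sigma$ first, then reading off both conclusions) and the Jacobian-level route one finds in the literature (Frobenius, being the scalar $-q$, is central in $\mathrm{End}(J)$, so $\sigma$ and its Galois conjugate $\sigma^{(q^2)}$ induce the same endomorphism of $J$ and hence coincide by injectivity) are interchangeable. Two steps you leave implicit should be spelled out to make the proof complete. First, passing from ``all inverse roots equal $-q$'' to ``$\Phi$ acts as multiplication by $-q$ on $\mathrm{Pic}^0$'' requires the semisimplicity of Frobenius on the Tate module (or one can simply cite the fundamental equation as a known fact). Second, the rigidity claim --- an automorphism $\phi$ of a curve of genus $g\ge2$ acting as the identity on $\mathrm{Pic}^0(\mathcal{X})$ must be the identity --- needs a justification valid in positive characteristic: such a $\phi$ acts trivially on the tangent space $H^1(\mathcal{X},\mathcal{O}_{\mathcal{X}})$ of the Jacobian at the origin, hence trivially on $H^0(\mathcal{X},\Omega)$ by duality, hence commutes with the canonical map; for non-hyperelliptic $\mathcal{X}$ this forces $\phi=\mathrm{id}$, and in the hyperelliptic case it leaves only the hyperelliptic involution, which is excluded because it acts as $-1\ne\mathrm{id}$ on the positive-dimensional abelian variety $\mathrm{Pic}^0(\mathcal{X})$. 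With those two points supplied, your argument is a sound self-contained proof of the statement the paper only cites.
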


We can use Lemma \ref{prankpel} to ensure that a Sylow $p$-subgroup of a non-tame 
automorphism group of an $\mathbb{F}_{q^2}$-maximal curve $\mathcal{X}$ fixes exactly one 
$\mathbb{F}_{q^2}$-rational point of $\mathcal{X}$.

\begin{corollary}\label{1actionp2} Let $p$ denote the characteristic of the finite field $\mathbb{F}_{q^2}$ where $q=p^t$ and let $\mathcal{X}$ be an $\mathbb{F}_{q^2}$-maximal curve with genus $g=g(\mathcal{X}) \geq 2$ such that $p \mid |\aut(\mathcal{X})|$. If $H$ is a $p$-subgroup of 
$\aut(\mathcal{X}),$ then $H$ fixes exactly one point $P\in \mathcal{X}(\mathbb{F}_{q^2})$ and acts semiregularly on the 
set of the remaining $\mathbb{F}_{q^2}$-rational points of $\mathcal{X}$.
  \end{corollary}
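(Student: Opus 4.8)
The plan is to exploit the two structural features of a maximal curve that are already recorded in the excerpt: its $p$-rank is $0$, and by Lemma \ref{prankpel} every automorphism is defined over $\mathbb{F}_{q^2}$ and preserves the finite set $\mathcal{X}(\mathbb{F}_{q^2})$. We may assume $H\neq\{1\}$, so that $H$ is a nontrivial $p$-group. Since $\gamma=0$, I would apply the Deuring--Shafarevich formula \eqref{eq2deuring} to the covering $\mathcal{X}\to\mathcal{X}/H$: writing $\bar{\gamma}$ for the $p$-rank of $\mathcal{X}/H$ and $\ell_1,\dots,\ell_k$ for the sizes of the short orbits of $H$, it reads $-1=|H|(\bar{\gamma}-1)+\sum_{i=1}^k(|H|-\ell_i)$, which I would rearrange to $|H|\bar{\gamma}+\sum_{i=1}^k(|H|-\ell_i)=|H|-1$.

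The next step is a purely arithmetic analysis of this relation. Each $\ell_i$ properly divides $|H|$ (short orbits have stabilizer of order $>1$), so every summand $|H|-\ell_i$ is a positive integer and the left-hand side is nonnegative; as it equals $|H|-1<|H|$ while $|H|\bar{\gamma}$ is a multiple of $|H|$, I conclude $\bar{\gamma}=0$ and $\sum_{i=1}^k(|H|-\ell_i)=|H|-1$. Reading this modulo $p$ and using that $|H|\equiv0$ and each $\ell_i$ is a power of $p$, a summand is $\equiv-1\pmod p$ exactly when $\ell_i=1$ (a fixed point) and $\equiv0$ otherwise; hence the number $f$ of fixed points of $H$ satisfies $f\equiv1\pmod p$, and in particular $f\geq1$. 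On the other hand each fixed point contributes the full value $|H|-1$ to the sum, so $f(|H|-1)\leq|H|-1$ forces $f=1$, and then the remaining summands must vanish, i.e. there is no short orbit other than the fixed point. Thus $H$ fixes a unique point $P\in\mathcal{X}$ and has trivial stabilizer at every other point, so $H$ acts semiregularly on $\mathcal{X}\setminus\{P\}$.

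It then remains to locate $P$ among the $\mathbb{F}_{q^2}$-rational points. Here I would invoke Lemma \ref{prankpel}: $H$ preserves the finite set $\mathcal{X}(\mathbb{F}_{q^2})$, whose cardinality is $q^2+2gq+1\equiv1\pmod p$ since $q=p^t$. For the $p$-group $H$ acting on this set, the number of its fixed points is congruent to $|\mathcal{X}(\mathbb{F}_{q^2})|\equiv1\pmod p$, hence is nonzero; as $H$ has only the single fixed point $P$ on all of $\mathcal{X}$, that point is necessarily $\mathbb{F}_{q^2}$-rational. Restricting the semiregular action to $\mathcal{X}(\mathbb{F}_{q^2})\setminus\{P\}$ yields the assertion.

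The step I expect to be the main obstacle is extracting \emph{exactly one} fixed point together with the absence of any further short orbit from the single Diophantine identity $\sum_{i=1}^k(|H|-\ell_i)=|H|-1$: it is the interplay of the mod-$p$ congruence (which guarantees existence of a fixed point) with the crude bound $f(|H|-1)\leq|H|-1$ (which forces uniqueness) that makes the argument close, and one must carefully use that short orbits of a $p$-group have $p$-power size strictly smaller than $|H|$. Matching the abstract fixed point to a rational one is then a short congruence count, but it genuinely relies on Lemma \ref{prankpel} to know that the $H$-action is defined over $\mathbb{F}_{q^2}$.
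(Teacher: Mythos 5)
Your proof is correct, but it follows a genuinely different route from the paper's. The paper first reduces to the case where $H$ is a Sylow $p$-subgroup, uses Lemma \ref{prankpel} together with $|\mathcal{X}(\mathbb{F}_{q^2})|\equiv 1\pmod p$ to produce at least one fixed $\mathbb{F}_{q^2}$-rational point, and then simply cites \cite[Lemma 11.129]{HKT} (valid since the $p$-rank is zero) to get uniqueness of the fixed point and semiregularity elsewhere; finally it passes the property down to arbitrary $p$-subgroups of the Sylow subgroup. You instead work with an arbitrary nontrivial $p$-subgroup $H$ from the start and derive the whole fixed-point structure directly from the Deuring--Shafarevich formula \eqref{eq2deuring}: the identity $|H|\bar{\gamma}+\sum_i(|H|-\ell_i)=|H|-1$ forces $\bar{\gamma}=0$, and the interplay of the mod-$p$ congruence (giving $f\equiv 1\pmod p$, hence $f\geq 1$) with the bound $f(|H|-1)\leq |H|-1$ (giving $f\leq 1$) shows there is exactly one short orbit, namely a single fixed point -- in effect you reprove the cited HKT lemma for $p$-groups. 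The two arguments also use the rational-point count in opposite directions: the paper uses it to \emph{produce} a fixed rational point before invoking the HKT lemma, whereas you first establish uniqueness abstractly and then use the count to conclude that the unique fixed point must be rational. What your approach buys is self-containedness (only Lemma \ref{prankpel}, the $p$-rank-zero property of maximal curves, and formula \eqref{eq2deuring} are needed, with no Sylow reduction); what the paper's approach buys is brevity, by outsourcing the core fixed-point statement to \cite[Lemma 11.129]{HKT}.
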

  
  \begin{proof} Assume first that $H$ is a Sylow $p$-subgroup of $\aut(\mathcal{X})$. Then from Lemma \ref{prankpel}, $H$ acts on the set $\mathcal{X}(\mathbb{F}_{q^2})$ of $\mathbb{F}_{q^2}$-rational 
points of $\mathcal{X}$. Since $|\mathcal{X}(\mathbb{F})| \equiv 1 \pmod{p}$, $H$ must fix at least one 
point $P \in \cX(\mathbb{F}_{q^2})$. Also, $\cX$ has zero $p$-rank and hence the claim 
follows from \cite[Lemma 11.129]{HKT}. 
Now assume that $H$ is an arbitrary $p$-subgroup of $\aut(\mathcal{X})$. Since $H$ is contained in at least one Sylow $p$-subgroup of $\aut(\mathcal{X})$, and the property of fixing a point $P \in \mathcal{X}$ and being semiregular elsewhere is preserved by subgroups, the claim follows for $H$ as well.
   \end{proof}

If the bound $|G|>84(g(\mathcal{X})-1)$ is satisfied, a lot can be said about the structure of the short orbits of $G$ on $\mathcal{X}$. The following theorem lists all the possibilities.

\begin{theorem} \rm{\cite[Theorem 11.56 and Lemma 11.111]{HKT}} \label{Hurwitz}
 Let $\cX$ be an irreducible curve of genus $g \geq  2$ defined over an algebraically closed field $\K$ of characteristic $p$. 
\begin{itemize}
\item If $G$ has at least five short orbits then $|G| \leq 4(g-1)$. 
\item If $G$ has four short orbits then $|G| \leq 12(g-1)$.
\item If $G$ has exactly one short orbit, then the length of this orbit divides $2g-2$.
\item If $p>0$ and $|G|>84(g-1)$ then the fixed field $\K(\cX)^G$ is rational and $G$ has at most three short orbits, namely: 
\begin{enumerate}
\item exactly three short orbits, two tame of length $|G|/2$ and one non-tame, with $p \geq 3$; or
\item  exactly two short orbits, both non-tame; or 
\item  only one short orbit, which is non-tame, whose length divides $2g-2$; or
\item  exactly two short orbits, one tame and one non-tame.
\end{enumerate}
\item In any case $|G|<8g^3$ unless one of the following cases occurs up to isomorphism over $\K$:
\begin{itemize}
\item $p = 2$ and $\cX$ is a non-singular model of $Y^2 + Y = X^{2k+1}$, with $k > 1$; 
\item $p > 2$ and $\cX$ is a non-singular model of $Y^2 = X^n -X$, where $n = p^h$ and $h > 0$;
\item $\cX$ is the Hermitian curve $\mathcal{H}_q: Y^{q+1}=X^q+X$ where $q = p^h$ and $h > 0$;
\item  $\cX$ is the non-singular model of the Suzuki curve $\mathcal{S}_q: X^{q_0}(X^q + X) = Y^q + Y$, where $q_0 = 2^r$, $r \geq 1$ and $q = 2q_0^2$. 
\end{itemize}
\end{itemize}
\end{theorem}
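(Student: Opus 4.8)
The plan is to derive every assertion from the Hurwitz genus formula \eqref{eq1}, organizing the argument by the quotient genus $\bar g$ and the contributions of the short orbits. List the branch points of $G$ as $\bar Q_1,\dots,\bar Q_r$; the orbit over $\bar Q_j$ has stabilizer of order $e_j\ge 2$ and length $\ell_j=|G|/e_j$, and since its points are pairwise conjugate they share the same different $d_{P_j}$. Thus \eqref{eq1} and \eqref{eq1bis} read
\[
2g-2=|G|(2\bar g-2)+\sum_{j=1}^{r}\ell_j\,d_{P_j},\qquad d_{P_j}=\sum_{i\ge 0}\big(|G_{P_j}^{(i)}|-1\big)\ge e_j-1 .
\]
Hence each short orbit contributes at least $\ell_j(e_j-1)=|G|(1-1/e_j)\ge |G|/2$, with a strictly larger contribution exactly when the orbit is non-tame, since by Lemma \ref{highnormal} the wild part adds $\sum_{i\ge 1}(|G_{P_j}^{(i)}|-1)>0$.

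First I would settle the numerical bounds. Dividing by $|G|$ and setting $R:=2\bar g-2+\sum_{j}(1-1/e_j)$, we get $2g-2\ge |G|\,R$ with $R>0$ because $g\ge 2$. If $r\ge 5$ then $R\ge -2+5/2=1/2$, so $|G|\le 4(g-1)$; if $r=4$, the configuration $(e_1,\dots,e_4)=(2,2,2,2)$ is excluded (it forces $R=0$), so some $e_j\ge 3$ and $R\ge -2+\tfrac32+\tfrac23=\tfrac16$, giving $|G|\le 12(g-1)$. The same bookkeeping shows $\bar g\ge 1$ already forces $|G|\le 4(g-1)$, so the hypothesis $|G|>84(g-1)$ compels $\bar g=0$, i.e.\ $\K(\cX)^G$ rational, and then $r\ge 4$ would give $|G|\le 12(g-1)$, leaving at most three short orbits. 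Since the least positive value of $-2+\sum_{j=1}^{3}(1-1/e_j)$ over integers $e_j\ge 2$ is $1/42$ (attained at $(2,3,7)$), purely tame ramification yields at most $|G|=84(g-1)$; hence $|G|>84(g-1)$ forces at least one non-tame orbit, and matching the admissible tame/wild patterns of the (at most three) orbits against $R>0$ singles out the four types (1)--(4). Finally, for a single short orbit the identity $2g-2=\ell_1\big(e_1(2\bar g-2)+d_{P_1}\big)$ shows directly that $\ell_1\mid 2g-2$, with no hypothesis on $\bar g$.

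The genuinely hard assertion is the last one: $|G|<8g^3$ outside the four listed curves; this is the Stichtenoth--Henn classification and is where I expect the real work. Assuming $|G|>84(g-1)$ (otherwise $|G|<8g^3$ is immediate for $g\ge2$), I would fix a point $P$ in a non-tame orbit produced above and write $G_P=S\rtimes H$ with $S$ the Sylow $p$-subgroup and $H$ cyclic of order prime to $p$ (Lemma \ref{highnormal}). The strategy is to bound $|S|$, $|H|$, and the orbit length in terms of $g$ by analysing the ramification filtration $G_P^{(0)}\supseteq G_P^{(1)}=S\supseteq\cdots$ through \eqref{eq1bis}: the higher ramification groups inflate the different at $P$, which via the genus formula caps $|G|$ unless the filtration is extremely rigid. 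Pushing this analysis to the extremal case should pin down the quotient, the admissible pairs $(|S|,|H|)$, and ultimately the defining equation of $\cX$, producing exactly the four families (the $p=2$ curve $Y^2+Y=X^{2k+1}$, the $p>2$ curve $Y^2=X^n-X$, the Hermitian curve, and the Suzuki curve). The main obstacle is precisely this last step: controlling the interaction between the $p$-group $S$, its action on $\cX$, and the genus tightly enough to separate the generic bound $|G|<8g^3$ from the four sporadic extremal geometries, whereas the preceding parts are essentially formal consequences of Riemann--Hurwitz.
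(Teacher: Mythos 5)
The paper does not prove this statement at all: it is quoted as background from Hirschfeld--Korchm\'aros--Torres (\cite[Theorem 11.56 and Lemma 11.111]{HKT}), so there is no internal proof to compare against; your attempt must therefore stand on its own. Your Riemann--Hurwitz bookkeeping for the elementary bullets is essentially sound: the bound $|G|\le 4(g-1)$ for five or more short orbits, $|G|\le 12(g-1)$ for four, the reduction to $\bar g=0$ and at most three short orbits when $|G|>84(g-1)$, the tame bound $84(g-1)$ via the $(2,3,7)$ minimum, and the divisibility $\ell_1\mid 2g-2$ for a single short orbit are all correct and are the standard arguments (with the small caveat that in the $(2,2,2,2)$ configuration you must separate the purely tame case, which forces $g=1$, from the case with a wild orbit, which gives an even stronger bound than $12(g-1)$).

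There are, however, two genuine gaps. First, the precise list (1)--(4) of admissible orbit structures under $|G|>84(g-1)$ is not a formal consequence of ``$R>0$'': one needs quantitative lower bounds on the wild different $d_P$ coming from the ramification filtration of Lemma \ref{highnormal} and a genuine case analysis on the multiset of stabilizer orders to exclude, for example, configurations with three short orbits where a tame one has $e_j\ge 3$; you assert that the matching ``singles out the four types'' rather than deriving it. Second, and decisively, the final bullet --- $|G|<8g^3$ unless $\cX$ is one of the four listed curves --- is Henn's classification theorem. Its proof is not obtained by ramification estimates at a single point $P$, which is the strategy you sketch: it requires identifying the abstract group-theoretic structure of $G$ (in the extremal cases $G$ acts $2$-transitively on the non-tame orbit, and one must recognize $G$ as $\mathrm{PGL}(2,q)$, $\mathrm{PGU}(3,q)$, or $\mathrm{Sz}(q)$ via classification results for such groups) and then recovering the defining equation of $\cX$ from the group action. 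You explicitly label this step ``the main obstacle'' and offer only a plan, so as written the proposal does not prove the statement; the parts you do prove are exactly the parts that are formal consequences of Riemann--Hurwitz, and the part you leave open is where the entire depth of the theorem resides.
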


A tool we will use to compute automorphism groups is the so-called Weierstrass semigroup $H(P)$ at a point $P \in \mathcal{X}$:
$$H(P):=\{i \in \mathbb{N} \colon \exists f \in \mathbb{K}(\mathcal{X}), (f)_\infty=iP \}.$$

It is well-known that the set $H(P)$ is a numerical semigroup and that from the Weierstrass gap theorem the set of gaps $G(P):=\mathbb{N} \setminus H(P)$ has cardinality $g$, see e.g. \cite[Theorem 1.6.8]{stichtenoth}.
Points that are in the same orbit under the action of an automorphism group of $\mathcal{X}$ have the same Weierstrass semigroup, see \cite[Lemma 3.5.2]{stichtenoth}.
The following lemma provides a tool to compute gaps.

\begin{lemma}{\rm \cite[Corollary 14.2]{VillaSalvador}} \label{gapsdiff}
 Let $\mathcal X$ be an algebraic curve of genus $g$ defined over $\mathbb{K}$. Let $P$ be a
point of $\mathcal X$ and $\omega$ be a regular differential on $\mathcal X$ . Then $v_P (\omega) + 1$ is a gap at $P$.
\end{lemma}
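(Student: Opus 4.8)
The plan is to argue by contradiction using the Residue Theorem. By definition, a positive integer $i$ fails to be a gap at $P$ precisely when there is a function $f\in\K(\cX)$ whose only pole is at $P$ and has order exactly $i$, i.e. $(f)_\infty=iP$. So I would set $n:=v_P(\omega)$, which is a non-negative integer because $\omega$ is regular, put $i:=n+1$, and assume for contradiction that $i$ is \emph{not} a gap, so that such an $f$ with $(f)_\infty=iP$ exists.

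Next I would examine the differential $\eta:=f\omega$ and read off its local behaviour. At $P$ one has $v_P(\eta)=v_P(f)+v_P(\omega)=-i+n=-1$, while at every other point $Q\ne P$ the function $f$ is regular and $\omega$ is regular, so $v_Q(\eta)=v_Q(f)+v_Q(\omega)\ge 0$. Hence $\eta$ is a differential whose only pole on $\cX$ is a simple pole at $P$.

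The key point is then that a simple pole forces a nonzero residue. Writing $\eta=\bigl(\sum_{j\ge -1}a_j\,t^{\,j}\bigr)\,dt$ in a local uniformizer $t$ at $P$, the condition $v_P(\eta)=-1$ means $a_{-1}\ne 0$, so $\ress_P(\eta)=a_{-1}\ne 0$, whereas $\ress_Q(\eta)=0$ for every $Q\ne P$ since $\eta$ is holomorphic there. This contradicts the Residue Theorem $\sum_{Q\in\cX}\ress_Q(\eta)=0$, which holds over any algebraically closed field. Therefore no such $f$ can exist, and $i=v_P(\omega)+1$ is a gap at $P$.

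The steps I expect to require the most care are the two ingredients supporting the residue computation: first, that $v_P(dt)=0$ for a uniformizer $t$, so that the order of $\eta$ is indeed read off from the coefficients $a_j$ and $a_{-1}\ne 0$ is guaranteed; and second, the validity of the Residue Theorem in positive characteristic, where one must rely on the purely algebraic definition of the residue rather than a complex-analytic one. A residue-free alternative is available via Riemann--Roch, since $i$ is a gap iff $\ell(iP)=\ell((i-1)P)$, and this in turn is equivalent to the existence of a regular differential with valuation exactly $i-1$ at $P$, namely $\omega$ itself; I would keep this as a cross-check but expect the residue argument above to be the cleanest route.
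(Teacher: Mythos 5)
Your proof is correct. Note that the paper itself does not prove this lemma: it is quoted directly from the literature (Corollary 14.2 of Villa Salvador's book), so there is no in-paper argument to compare against. The standard textbook route, which is what the cited reference follows, is dual to yours: by Riemann--Roch, $\ell(iP)-\ell((i-1)P)=1+\ell(K-iP)-\ell(K-(i-1)P)$, so a positive integer $i$ is a gap at $P$ if and only if there exists a regular differential with valuation exactly $i-1$ at $P$; applying this with $i-1=v_P(\omega)$ gives the statement at once. Your residue-theoretic argument is a genuine alternative: it trades the Riemann--Roch dimension count for the Residue Theorem, and you correctly isolate the two facts that make it work in positive characteristic, namely that $v_P(dt)=0$ for a uniformizer $t$ (true over an algebraically closed, hence perfect, field, where a local parameter is automatically a separating element) and that the sum of residues of any differential vanishes, which holds in arbitrary characteristic via the purely algebraic construction of residues (Tate). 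Granting those two ingredients, your proof is self-contained and somewhat more elementary; the Riemann--Roch proof is shorter if duality is already available, which is presumably why the paper simply cites it. Your closing cross-check (gap at $P$ $\Leftrightarrow$ $\ell(iP)=\ell((i-1)P)$ $\Leftrightarrow$ existence of a regular differential with $v_P=i-1$) is exactly that dual argument, so in effect you have both proofs.
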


In the following section the first generalized GK curve will be introduced, and with that also the protagonists of this paper, namely the curves $\mathcal{Y}_{n,s}$ and $\mathcal{X}_{a,b,n,s}$.

\subsection{The first generalized GK curve $\mathcal{C}_n$ (GGS curve)}\label{sec:GGS}

As mentioned in the introduction, from a result commonly attributed to Serre \cite{KS}, we know that every curve which is $\mathbb{F}_{q^2}$-covered by an $\mathbb{F}_{q^2}$-maximal curve is itself also $\mathbb{F}_{q^2}$-maximal. The most important example of $\mathbb{F}_{q^2}$-maximal curve is the Hermitian curve $\mathcal{H}_q$, with affine equation $Y^{q+1}=X^{q+1}-1$ or $Y^{q+1}=X^{q}+X$. The automorphisms group of $\mathcal{H}_q$ is very large compared to $g(\mathcal{H}_q)$. Indeed it is isomorphic to ${\rm PGU}(3,q)$ and its order is larger than $16 g(\mathcal{H}_q)^4$. Moreover $\mathcal{H}_q$ has the largest genus admissible for an $\mathbb{F}_{q^2}$-maximal curve and it is the unique curve having this property up to birational isomorphism, see \cite{RS}.

Few examples of maximal curves not covered by $\mathcal{H}_q$ are known in the literature. In \cite{GK} Giulietti and Korchm\'aros constructed an $\mathbb{F}_{q^6}$-maximal curve, nowadays known as the GK curve, which is not a subcover of the Hermitian curve $\mathcal{H}_{q^3}$ whenever $q\geq3$.
An affine space model for it is
\[
\mathcal{GK}:\begin{cases}
    Z^{q^2-q+1}=Y\frac{X^{q^2}-X}{X^q+X}\\
    Y^{q+1}=X^q+X\\
\end{cases}.
\]
The full automorphism group of $\mathcal{GK}$ has order $(q^3+1)q^3(q^2-1)(q^2-q+1)$. It is generated by two normal subgroups, one isomorphic to ${\rm PGU}(3,q)$ and the other cyclic of order $q^2-q+1$, and contains ${\rm PGU}(3,q)\times C_{\frac{q^2-q+1}{\gcd(3,q+1)}}$ as a normal subgroup of index $\gcd(3,q+1)$; see \cite[Section 5]{GK}.

Two generalizations of the GK curve into infinite families of maximal curves are known in the literature and they are not Galois subcovers of the corresponding maximal Hermitian curve. 
The first generalization $\mathcal{C}_{n}$ was introduced by Garcia, G\"{u}neri and Stichtenoth in \cite{GGS}, whence the name of GGS curve. For any prime power $q$ and odd $n\geq 3$, the GGS curve $\mathcal{C}_{n}$ is given by the affine space model
 \begin{equation}\label{GGS}
\mathcal{C}_{n}:\begin{cases}
Z^m=Y\frac{X^{q^2}-X}{X^q+X} \\
Y^{q+1}=X^{q}+X
\end{cases}
\end{equation} where $m:=\frac{q^n+1}{q+1}$; 
$\mathcal{C}_n$ is equivalently defined by the equations
\[\mathcal{C}_n\colon\quad Z^m=Y^{q^2}-Y,\quad Y^{q+1}=X^q+X.\]
Notice that $\mathcal{C}_{3}$ is the GK curve $\mathcal{GK}$.
The curve $\mathcal{C}_{n}$ is $\mathbb{F}_{q^{2n}}$-maximal of genus $g(\mathcal{C}_{n})=(q-1)(q^{n+1}+q^n-q^2)/2$. 
Whenever $n\geq5$, $\mathcal{C}_n$ is not a Galois subcover of $\mathcal{H}_{q^n}$; see \cite{DM} for $q\geq3$ and \cite{GMZ} for $q=2$.

For any $n\geq5$, the automorphism group of $\mathcal{C}_n$ was determined independently in \cite{GOS} and \cite{GMP}. It has order $q^3(q^2-1)m$ and it is a semidirect product ${\rm Aut}(\mathcal{C}_n)=S_{q^3}\rtimes \Sigma$, where
\begin{eqnarray*}
S_{q^3}&=&\left\{(x,y,z)\mapsto(x+b^q y+c,y+b,z)\;\colon\; b,c\in\mathbb{F}_{q^2},c^q+c=b^{q+1}\right\}\cong E_q\,.\,E_{q^2}\;,\\
\Sigma&=&\left\{(x,y,z)\mapsto(x,y,z)\mapsto(\zeta^{q^n+1}x,\zeta^{m}y,\zeta z)\;\colon\; \zeta^{(q^n+1)(q-1)}=1\right\}\cong C_{(q^n+1)(q-1)}\;.
\end{eqnarray*}
Let $x,y,z$ be the coordinate functions. The function field $\mathbb{F}_{q^{2n}}(x,y,z)$ of $\mathcal{C}_n$ is a Kummer extension of degree $m$ of the Hermitian function field $\mathbb{F}_{q^{2n}}(x,y)$, the Galois group of $\mathbb{F}_{q^{2n}}(x,y,z)/\mathbb{F}_{q^{2n}}(x,y)$ being the subgroup of order $m$ in $\Sigma$. In this extension, the places centered at the $q^3+1$ $\mathbb{F}_{q^2}$-rational points of $\mathcal{H}_q$ are the unique ramified places, and they are totally ramified.

The group ${\rm Aut}(\mathcal{C}_n)$ has exactly two short orbits on $\mathcal{C}_n$, namely the singleton $\{P_{\infty}\}$, where $P_{\infty}$ is an $\mathbb{F}_{q^2}$-rational point of $\mathcal{C}_n$ (common pole of $x$, $y$ and $z$), and the set $\mathcal{O}$ of the remaining $q^3$ $\mathbb{F}_{q^2}$-rational points of $\mathcal{C}_n$.
The principal divisor of the variable $z$ is
\[
(z)=\sum_{P\in\mathcal{O}}P- q^3 P_{\infty}.
\]



\subsection{The curves $\mathcal{Y}_{n,s}$ and $\mathcal{X}_{a,b,n,s}$}

Two families of subcovers $\mathcal{Y}_{n,s}$ and $\mathcal{X}_{a,b,n,s}$ of the GGS curve $\mathcal{C}_n$ were introduced and studied by Tafazolian, Teh\'eran-Herrera and Torres in \cite{TTT}.

Let $q$ be a prime power, $n \geq 3$ be an odd integer and $s\geq1$ be a divisor of $m=\frac{q^n+1}{q+1}$.
We always assume $s\ne m$ (otherwise, $\cY_{n,s}$ is the Hermitian curve $\mathcal{H}_q$).
The curve $\mathcal{Y}_{n,s}$ is defined over $\FF_{q^{2n}}$ by the affine equations
\begin{equation} \label{Yns}
\mathcal{Y}_{n,s}: \left\{ \begin{array}{rcl}
Z^{m/s}=Y^{q^2}-Y\\
Y^{q+1}=X^q+X
\end{array}\right..
\end{equation}
It is an $\FF_{q^{2n}}$-maximal curve and it has genus $$g(\mathcal{Y}_{n,s})=\dfrac{q^{n+2}-q^n - sq^3 + q^2+s-1}{2s}.$$
The curve $\mathcal{Y}_{n,s}$ is clearly a subcover of the GGS curve, but 
it is also a generalization of the GGS, 
as it provides a larger family of maximal curves in which $\mathcal{C}_n$ lives from $\mathcal{Y}_{n,1}=\mathcal{C}_n$.

Let $x,y,z$ be the coordinate functions of $\mathcal{Y}_{n,s}$, $P_{\infty}$ be the unique common pole of $x,y,z$, and $P_{(\alpha,\beta,\gamma)}$ denote the $\mathbb{F}_{q^{2n}}$-rational point of $\mathcal{Y}_{n,s}$ which is a zero of $x-\alpha,y-\beta,z-\gamma$.
Then, for any $\alpha,\beta \in \mathbb{F}_{q^{2n}}$, we have the following principal divisors:
\begin{equation} \label{div x Yns}
(x - \alpha) = (q+1)m/s P_{(\alpha,0,0)} - (q+1)m/s P_{\infty} \mbox{ ;}    \end{equation}
\begin{equation} \label{div y Yns}
\displaystyle (y - \beta) = \sum_{i=1}^{q} m/s P_{(\alpha_i,\beta,0)} - qm/s P_{\infty}, \mbox{ with } \alpha_{i}^{q} + \alpha_i = \beta^{q+1}\mbox{ ;}  
\end{equation}
\begin{equation} \label{div z Yns}
\displaystyle (z) = \sum_{j=1}^{q^2}\sum_{i=1}^{q} P_{(\alpha_i,\beta_j,0)} - q^3 P_{\infty}, \mbox{ with } \alpha_i,\beta_{j} \in \mathbb{F}_{q^2} \mbox{ and } \beta_{j}^{q+1} = \alpha_i^{q} + \alpha_i \mbox{ for all } i,j.    
\end{equation}
We will denote with $O$ the set $O:=\{P_{(\alpha_i,\beta_j,0)} \,\colon\, \alpha_i,\beta_j \in \mathbb{F}_{q^2} , \; \beta_j^{q+1}=\alpha_i^q+\alpha_i\}$ of cardinality $q^3$ given by the totally ramified points in $\mathbb{F}_{q^2}(x,y,z)/\mathbb{F}_{q^2}(x,y)$ other than $P_\infty$.
From \cite[Proposition 5.1]{TTT} we have that $H(P_{\infty}) = \langle qm/s, q^3, (q+1)m/s \rangle$, and $H(P_{\infty})$ is a telescopic semigroup. 

The following lemma will be used to determine the full automorphism group of $\mathcal{Y}_{n,s}$.

\begin{lemma} \label{exact}
Denote with $dz$ the differential of the function $z$ in the function field of $\mathcal{Y}_{n,s}$. Then $(dz)$ is equal to
$$K= (2g(\cY_{n,s})-2)P_{\infty}.$$
In particular, the canonical differential $K$ is exact.
\end{lemma}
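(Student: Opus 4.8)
The plan is to compute $(dz)$ by viewing $z$ as a separable cover $\varphi\colon \mathcal{Y}_{n,s}\to\mathbb{P}^1$ and invoking the standard relation between the divisor of a differential and the different of the corresponding extension. Write $F=\K(\mathcal{Y}_{n,s})$. First I would check that $z$ is a separating element: $y$ is a root of $T^{q^2}-T-z^{m/s}$ over $\K(z)$, which is separable since its derivative is $-1$, and $x$ is a root of $T^{q}+T-y^{q+1}$ over $\K(y,z)$, which is separable since its derivative is $1$; hence $F/\K(z)$ is separable of degree $q^3$ and $dz\neq 0$. Then, by the standard relation between the canonical divisor and the different (see, e.g., \cite[Sec.~4.3]{stichtenoth}),
$$(dz)=-2\,(z)_\infty+\mathfrak{d},$$
where $\mathfrak{d}=\mathrm{Diff}(F/\K(z))$ is the different of $F/\K(z)$ and $(z)_\infty$ is the pole divisor of $z$. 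By \eqref{div z Yns} we have $(z)_\infty=q^3P_\infty$, so $-2\,(z)_\infty=-2q^3P_\infty$.

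The key step is to locate the ramification of $\varphi$. Since $\deg\varphi=\deg(z)_\infty=q^3$ and $(z)_\infty=q^3P_\infty$, the place $P_\infty$ is the unique place over $z=\infty$, where $\varphi$ is totally (and wildly) ramified. I would then show that $\varphi$ is unramified over every finite value $z=\gamma$. Fix $\gamma\in\K$; the points of $\mathcal{Y}_{n,s}$ with $z=\gamma$ are the affine points $P_{(\alpha,\beta,\gamma)}$ with $\beta^{q^2}-\beta=\gamma^{m/s}$ and $\alpha^q+\alpha=\beta^{q+1}$. The polynomial $T^{q^2}-T-\gamma^{m/s}$ is separable of degree $q^2$, hence has $q^2$ distinct roots $\beta$, and for each such $\beta$ the polynomial $T^q+T-\beta^{q+1}$ is separable of degree $q$, hence has $q$ distinct roots $\alpha$. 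This yields exactly $q^3$ distinct points in $\varphi^{-1}(\gamma)$; moreover, since $P_\infty$ is by \eqref{div x Yns}, \eqref{div y Yns}, \eqref{div z Yns} the unique pole of $x,y,z$, no point at infinity lies over the finite value $\gamma$, so the affine count is exhaustive. As $|\varphi^{-1}(\gamma)|=q^3=\deg\varphi$, every ramification index over $\gamma$ equals $1$, i.e. $\varphi$ is unramified at each point with $z\neq\infty$.

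Consequently $\mathfrak{d}$ is supported only at $P_\infty$, say $\mathfrak{d}=d_{P_\infty}P_\infty$, and therefore $(dz)=(d_{P_\infty}-2q^3)P_\infty$ is supported at $P_\infty$ alone. Since $dz$ is a nonzero differential, $(dz)$ is a canonical divisor and hence has degree $2g(\mathcal{Y}_{n,s})-2$; comparing coefficients of the single place $P_\infty$ forces
$$(dz)=\bigl(2g(\mathcal{Y}_{n,s})-2\bigr)P_\infty=K.$$
The differential $dz$ is exact by construction, being the differential of the function $z$, which gives the final assertion. Note that one need not compute the wild different exponent $d_{P_\infty}$ explicitly: the degree of a canonical divisor fixes the coefficient automatically.

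I expect the only delicate point to be the behaviour at infinity. The cover $\varphi$ is \emph{wildly} ramified at $P_\infty$ because $p\mid q^3$, so a naive differentiation $dz=-q^3\,t^{-q^3-1}\,dt$ in a local parameter $t$ loses its leading term and cannot be used to read off $v_{P_\infty}(dz)$ directly. The degree argument above is precisely what circumvents computing $d_{P_\infty}$; all that is genuinely needed is the separability-based fibre count, which shows that no finite value of $z$ is a branch point, together with the fact that $P_\infty$ is the unique pole of $x,y,z$ so that this count is complete.
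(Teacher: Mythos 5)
Your proof is correct, but it locates the ramification of $F/\K(z)$ by a genuinely different argument than the paper. The paper exploits the fact that the extension is Galois: its Galois group is the $p$-group $S_{q^3}=\{\theta_{b,c}\}$, which fixes $P_\infty$, and since the maximal curve $\cY_{n,s}$ has $p$-rank zero, \cite[Lemma 11.129]{HKT} forces every nontrivial $p$-element to fix no place other than $P_\infty$; hence $P_\infty$ is the unique ramified place. You instead work directly with the defining equations: separability of $T^{q^2}-T-\gamma^{m/s}$ and $T^q+T-\beta^{q+1}$ gives $q^3$ distinct affine points in the fibre over any finite value $\gamma$, and since $x,y,z$ have poles only at $P_\infty$ this count is exhaustive, so all finite fibres are unramified. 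From that point on the two proofs coincide: both invoke the relation $(dz)_F=\mathrm{Con}_{F/\K(z)}((dz))+\mathrm{Diff}(F/\K(z))$ (\cite[Theorem 3.4.6]{stichtenoth}) and then let the degree $2g(\cY_{n,s})-2$ of the canonical divisor fix the coefficient at $P_\infty$, exactly as you do. Your route is more elementary and self-contained: it uses neither the Galois structure of the cover nor the $p$-rank zero property of maximal curves, so it would survive in settings where those tools are unavailable. What it costs is a small amount of care in identifying places of $F$ over $z=\gamma$ with affine solutions of the system: strictly speaking you need that every such solution is the center of at least one place of the (irreducible) curve, which is standard and can be made airtight by running Kummer's theorem through the tower $\K(z)\subset\K(y,z)\subset F$, but it is a point worth flagging. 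The paper's group-theoretic argument sidesteps any discussion of affine models and centers, and it reuses the subgroup $S_{q^3}$ that the rest of the paper needs anyway, which is why it is the more economical choice in context.
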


\begin{proof}
Denote with $F$ the function field of $\mathcal{Y}_{n,s}$ over the algebraic closure $\mathbb{K}$ of $\mathbb{F}_{q^{2n}}$. By Equation \eqref{div z Yns}, the function field extension $F/\mathbb{K}(z)$ is of degree $q^3$. More precisely, $F/\mathbb{K}(z)$ is a Galois extension whose Galois group $G\subseteq\Aut(\mathcal{Y}_{n,s})$ is $G=\{\theta_{b,c}\;\colon\;b,c\in\mathbb{F}_{q^2},c^q+c=b^{q+1} \}$, where
$$\theta_{b,c}(x)=x+b^qy +c, \quad \theta_{b,c}(y)=y+b, \quad \theta_{b,c}(z)=z.$$
Since $G$ fixes the function $z$, it fixes its divisor. This implies that $G$ acts on the support of both zero and pole divisors of $z$ given in \eqref{div z Yns}. In particular, $G$ fixes $P_\infty$. Since $\mathcal{Y}_{n,s}$ is $\mathbb{F}_{q^{2n}}$-maximal it has $p$-rank zero. From \cite[Lemma 11.129]{HKT} $P_\infty$ is the only ramified point in  $F/\mathbb{K}(z)$, as elements of order a power of the characteristic $p$ can only fix $P_\infty$ and no other places in $F$.
From \cite[Theorem 3.4.6]{stichtenoth},
$$({\rm Cotr} F/\mathbb{K}(z)(dz)) = (dz)_F = {\rm Con} F/\mathbb{K}(z)((dz)) + {\rm Diff}(F|\mathbb{K}(z)).$$

Since the support of both ${\rm Con} F|\mathbb{K}(z)((dz))$ and ${\rm Diff}(F|\mathbb{K}(z))$ is just $P_\infty$, while the degree of the divisor $(dz)_F$ is $2g(\mathcal{Y}_{n,s})-2$, we get
that $(dz)=K$.
\end{proof}

The maximality of $\mathcal{Y}_{n,s}$, the fact that each point of $O \cup \{P_\infty\}$ is totally ramified in the covering $\mathcal{Y}_{n,s} \rightarrow \mathcal{H}_q$ with $P \mapsto \bar P$ and the fact that $O \cup \{P_\infty\}$ is exactly where the ramification occurs, yield the existence of special functions on the function fields of both $\mathcal{Y}_{n,s}$ and the Hermitian curve $\mathcal{H}_q$, that can be arbitrarily difficult to construct by hands. This is a consequence of the so-called \textit{Fundamental equation}, see \cite[Section 9.8]{HKT}. These functions are summarized in the following lemma.

\begin{lemma}\label{fundeq}
Let $P =P_{(\alpha,\beta,\gamma,1)}$ be an $\mathbb{F}_{q^{2n}}$-rational point of $\mathcal{Y}_{n,s}$. Then there exists a function $f_P$ in the function field of  $\mathcal{Y}_{n,s}$ such that 
$$(\pi_P)=(q^n+1)P-(q^n+1)P_\infty,$$
that is the order of the equivalence class $[P-P_\infty]$ in the Picard group of $\mathcal{Y}_{n,s}$ divides $q^n+1$.
If $\gamma \ne 0$, that is $P \not\in O$, then there exists a function $\xi_{\bar P}$ on the Hermitian curve $\mathcal{H}_q$ such that 
$$(\xi_{\bar P})_{\mathcal{H}_q}=q\bar P+\Phi(\bar P)-(q+1)\bar{P}_\infty,$$
where $\Phi(\bar P)$ denotes the $\mathbb{F}_{q^2}$-Frobenius image of $\bar P$. In particular, seeing the function $\xi_{\bar P}$ on $\mathcal{Y}_{n,s}$ gives
$$(\xi_{\bar P})=qP+E_P-(q+1)m/s P_\infty,$$
where $E_P$ is an effective divisor whose support does not contain $P$ nor $P_\infty$.
\end{lemma}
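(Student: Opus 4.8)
The plan is to deduce both assertions from the fundamental equation for maximal curves (see \cite[Section 9.8]{HKT}), invoked once on $\mathcal{Y}_{n,s}$ and once on its Hermitian quotient $\mathcal{H}_q$, and then to transport the Hermitian function to $\mathcal{Y}_{n,s}$ through the degree-$m/s$ covering $\rho\colon\mathcal{Y}_{n,s}\to\mathcal{H}_q$ induced by $(x,y,z)\mapsto(x,y)$.

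For the first divisor I would use that $\mathcal{Y}_{n,s}$ is $\mathbb{F}_{q^{2n}}$-maximal, so the $q^{2n}$-power Frobenius $\Psi$ acts as multiplication by $-q^n$ on the degree-zero part of the Picard group. Thus for every point $Q$ and the $\mathbb{F}_{q^{2n}}$-rational point $P_\infty$ one has $\Psi(Q)+q^nQ\sim(q^n+1)P_\infty$. Applying this to the $\mathbb{F}_{q^{2n}}$-rational point $P$, which is fixed by $\Psi$, gives $(q^n+1)P\sim(q^n+1)P_\infty$; hence a function $\pi_P$ with $(\pi_P)=(q^n+1)P-(q^n+1)P_\infty$ exists, which is exactly the assertion that $[P-P_\infty]$ has order dividing $q^n+1$ in ${\rm Pic}(\mathcal{Y}_{n,s})$.

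For the Hermitian function, $\mathcal{H}_q$ is $\mathbb{F}_{q^2}$-maximal, so the same principle over $\mathbb{F}_{q^2}$ yields $q\bar P+\Phi(\bar P)\sim(q+1)\bar P_\infty$, hence a function $\xi_{\bar P}$ with $(\xi_{\bar P})_{\mathcal{H}_q}=q\bar P+\Phi(\bar P)-(q+1)\bar P_\infty$. Here the hypothesis $\gamma\ne0$ is used: from $\gamma^{m/s}=\beta^{q^2}-\beta$ it forces $\beta\notin\mathbb{F}_{q^2}$, so $\bar P=(\alpha,\beta)$ is not $\mathbb{F}_{q^2}$-rational and therefore $\bar P\ne\Phi(\bar P)$ and $\bar P\ne\bar P_\infty$; this makes the three points in the divisor genuinely distinct and the display the reduced form of $(\xi_{\bar P})_{\mathcal{H}_q}$. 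I would then pull $\xi_{\bar P}$ back along $\rho$. The branch locus of $\rho$ is precisely $\mathcal{H}_q(\mathbb{F}_{q^2})$, where ramification is total; in particular $\bar P_\infty$ lies under $P_\infty$ with index $m/s$, whence $\rho^*\big((q+1)\bar P_\infty\big)=(q+1)\tfrac{m}{s}P_\infty$, while $\bar P$ and $\Phi(\bar P)$, being non-$\mathbb{F}_{q^2}$-rational, split into $m/s$ distinct unramified preimages. Writing $\rho^*(\bar P)=P+(\rho^*(\bar P)-P)$ and collecting terms gives $(\xi_{\bar P})=qP+E_P-(q+1)\tfrac{m}{s}P_\infty$ with $E_P:=q\big(\rho^*(\bar P)-P\big)+\rho^*(\Phi(\bar P))$ effective; as the fibres over $\bar P$ and $\Phi(\bar P)$ are disjoint from each other (since $\bar P\ne\Phi(\bar P)$) and from the fibre over $\bar P_\infty$, the support of $E_P$ avoids both $P$ and $P_\infty$, as required.

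The main obstacle is the ramification bookkeeping of $\rho$, namely the clean identification $\gamma\ne0\Leftrightarrow\bar P\notin\mathcal{H}_q(\mathbb{F}_{q^2})$ that separates the totally ramified pole at $P_\infty$ from the unramified behaviour over $\bar P$ and $\Phi(\bar P)$; once this is in place, the two divisor computations are direct pullbacks. A minor point worth spelling out is that the fundamental equation delivers genuine functions with the stated principal divisors, which is immediate once the linear equivalences $(q^n+1)(P-P_\infty)\sim0$ and $q\bar P+\Phi(\bar P)\sim(q+1)\bar P_\infty$ have been recorded.
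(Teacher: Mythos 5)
Your proposal is correct and follows exactly the route the paper intends: the paper states this lemma without a detailed proof, justifying it by the Fundamental Equation of \cite[Section 9.8]{HKT} (maximality makes Frobenius act as $-q^n$, resp.\ $-q$, on the Jacobian) together with the fact that the covering $\mathcal{Y}_{n,s}\to\mathcal{H}_q$ is ramified exactly, and totally, over $O\cup\{P_\infty\}$, i.e.\ over $\mathcal{H}_q(\mathbb{F}_{q^2})$. Your write-up simply makes explicit the two linear equivalences and the pullback bookkeeping (including the observation that $\gamma\ne0$ forces $\beta\notin\mathbb{F}_{q^2}$, so $\bar P$ and $\Phi(\bar P)$ are distinct and unramified), which is precisely the argument the paper compresses into its preceding paragraph.
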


The curve $\mathcal{X}_{a,b,n,s}$ is defined for any odd integer $n\geq3$ and prime power $q$ where $q=p^a$ with $p$ prime, $b\geq1$ is a divisor of $a$, $\bar{q}:=p^b$, and $s\geq1$ is a divisor of $m=\frac{q^n+1}{q+1}$. 
Choose $c \in \mathbb{F}_{q^2}$ such that $c^{q-1}=-1$. The $\FF_{q^{2n}}$-rational curve $\mathcal{X}_{a,b,n,s}$ is given by the affine equations
\begin{equation} \label{Xabns}
\mathcal{X}_{a,b,n,s}:\begin{cases}
    Z^{m/s}=Y^{q^2}-Y\\
    cY^{q+1}=\mathrm{Tr}_{q/\bar{q}}(X)
\end{cases},
\end{equation}
where $\mathrm{Tr}_{q/\bar{q}}(X)=X+X^{\bar{q}}+\cdots+X^{q/\bar{q}}$ is the trace map of the extension $\mathbb{F}_q/\mathbb{F}_{\bar{q}}$.

The curve $\mathcal{X}_{a,b,n,s}$ is $\FF_{q^{2n}}$-maximal and it has genus $$g(\mathcal{X}_{a,b,n,s})=\dfrac{q^{n+2}-\bar{q}q^n - sq^3 + q^2+(s-1)\bar{q}}{2s\bar{q}}.$$ Furthermore, $\mathcal{X}_{a,b,n,s}$ is a subcover of the GGS curve $\mathcal{C}_n$. 

Let $x,y,z$ be the coordinate functions of $\mathcal{Y}_{n,s}$, $P_{\infty}$ be the unique common pole of $x,y,z$, and $P_{(\alpha,\beta,\gamma)}$ denote the $\mathbb{F}_{q^{2n}}$-rational point of $\mathcal{Y}_{n,s}$ which is a zero of $x-\alpha,y-\beta,z-\gamma$.
Then, for any $\alpha,\beta \in \mathbb{F}_{q^{2n}}$, we have the following principal divisors:
\begin{equation} \label{div x}
(x - \alpha) = (q+1)m/sP_{(\alpha,0,0)} - (q+1)m/sP_{\infty} \mbox{ };  
\end{equation}
\begin{equation} \label{div y}
\displaystyle (y - \beta) = \sum_{i=1}^{q/\bar{q}} m/sP_{(\alpha_i,\beta,0)} - \dfrac{q}{\bar{q}}m/sP_{\infty}, \mbox{ with } \mathrm{Tr}_{q/\bar{q}}(\alpha_{i}) = \beta^{q+1}\mbox{ } ;    
\end{equation}
\begin{equation} \label{div z}
\displaystyle (z) = \sum_{j=1}^{q^2}\sum_{i=1}^{q/\bar{q}} P_{(\alpha_i,\beta_j,0)} - \dfrac{q^3}{\bar{q}} P_{\infty}, \mbox{ with } \beta_{j} \in \mathbb{F}_{q^2} \mbox{ and } c \beta_{j}^{q+1} = \mathrm{Tr}_{q/\bar{q}}(\alpha_i), \mbox{ for all } i,j.   
\end{equation}
From \cite[Proposition 5.1]{TTT} we have that $H(P_{\infty}) = \langle \frac{q}{\bar q}m/s, \frac{q^3}{\bar q}, (q+1)m/s \rangle$, which is a telescopic semigroup.

\section{The automorphism group of $\mathcal{Y}_{n,s}$ and $\mathcal{X}_{a,b,n,s}$}\label{sec:results}

In this section the automorphism groups of of $\mathcal{Y}_{n,s}$ and $\mathcal{X}_{a,b,n,s}$ are computed. 

\subsection{The automorphism group of $\mathcal{Y}_{n,s}$}

We aim to prove the following theorem.

\begin{theorem} \label{mainY}
    If $3\nmid n$ or $\frac{m}{s}\nmid(q^2-q+1)$, then the full automorphism group ${\rm Aut}(\mathcal{Y}_{n,s})$ of $\mathcal{Y}_{n,s}$ has order $q^3(q^2-1)m/s$ and is isomorphic to $S_{q^3}\rtimes C_{(q^2-1)\frac{m}{s}}$.
    
    If $3\mid n$ and $\frac{m}{s}\mid (q^2-q+1)$, then ${\rm Aut}(\mathcal{Y}_{n,s})$ has order $(q^3+1)q^3(q^2-1)m/s$ and is isomorphic to ${\rm PGU}(3,q)\rtimes C_{m/s}$. 
\end{theorem}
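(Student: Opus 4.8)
The plan is to first produce a large subgroup of $\Aut(\mathcal Y_{n,s})$ and then prove it is the whole group. For the lower bound, the $p$-group $S_{q^3}=\{\theta_{b,c}\}$ of Lemma~\ref{exact} already fixes $z$ and preserves \eqref{Yns}; moreover, for every $\nu$ with $\nu^{(q^2-1)m/s}=1$ the diagonal map $(x,y,z)\mapsto(\nu^{(q^n+1)/s}x,\nu^{m/s}y,\nu z)$ preserves both equations of \eqref{Yns}, since $\nu^{m/s}\in\mathbb F_{q^2}$ and $(q+1)m=q^n+1$. These maps form a cyclic group $C_{(q^2-1)m/s}$ normalizing $S_{q^3}$, so $\bar G:=S_{q^3}\rtimes C_{(q^2-1)m/s}\le\Aut(\mathcal Y_{n,s})$ has order $q^3(q^2-1)m/s$. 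In the special range $3\mid n$ and $\frac ms\mid(q^2-q+1)$, the degree-$(m/s)$ Kummer cover $\mathcal Y_{n,s}\to\mathcal H_q$ is a quotient of the cyclic cover $\mathcal{GK}\to\mathcal H_q$ of degree $q^2-q+1$, and since $\PGU(3,q)=\Aut(\mathcal H_q)$ lifts to $\mathcal{GK}$ it also lifts here, giving $\PGU(3,q)\rtimes C_{m/s}\le\Aut(\mathcal Y_{n,s})$.

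For the upper bound I would single out $P_\infty$ using Lemma~\ref{exact}: the identity $(dz)=(2g-2)P_\infty$ places $P_\infty$ in the finite, $\Aut$-invariant set of points $P$ with $(2g-2)P$ canonical, and since $S_{q^3}\le G_{P_\infty}$ its orbit is short. After the direct estimate $|\bar G|>84(g(\mathcal Y_{n,s})-1)$, the group $G:=\Aut(\mathcal Y_{n,s})$ is non-tame and Theorem~\ref{Hurwitz} forces at most three short orbits; by Corollary~\ref{1actionp2} these are $\mathbb F_{q^2}$-rational, each meeting a Sylow $p$-subgroup in a single fixed point.

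Next I would compute the stabilizer $G_{P_\infty}=S\rtimes H$ (Lemma~\ref{highnormal}) and show $S=S_{q^3}$ and $|H|=(q^2-1)m/s$. For the $p$-part: a $p$-group fixes only $P_\infty$ (Corollary~\ref{1actionp2}), and $\mathcal Y_{n,s}/S_{q^3}=\mathbb P^1=\mathbb K(z)$ is branched only at $P_\infty$, so computing the different of $S_{q^3}$ at $P_\infty$ from the pole orders $(q+1)m/s,\,qm/s,\,q^3$ and substituting into \eqref{eq1} returns exactly $g(\mathcal Y_{n,s})$; a strictly larger $p$-group would, together with the zero $p$-rank through Deuring--Shafarevich \eqref{eq2deuring} and \eqref{eq1}, force a genus different from $g(\mathcal Y_{n,s})$, so $S=S_{q^3}$. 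For the tame part: $H$ scales a local uniformizer at $P_\infty$ by a root of unity and acts on $H(P_\infty)=\langle qm/s,q^3,(q+1)m/s\rangle$, so $|H|$ divides $(q^2-1)m/s$; equality comes from $\bar G$. Hence $|G_{P_\infty}|=q^3(q^2-1)m/s$.

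It then remains to find the length of the orbit $o$ of $P_\infty$, since $|G|=|o|\,|G_{P_\infty}|$. I would show $o\subseteq O\cup\{P_\infty\}$, so $|o|\in\{1,q^3+1\}$, and that $|o|=q^3+1$ holds precisely when $\PGU(3,q)$ lifts, i.e.\ when $3\mid n$ and $\frac ms\mid(q^2-q+1)$, the functions of Lemma~\ref{fundeq} governing whether a point of $O$ can be sent to $P_\infty$. In the generic case $|o|=1$, whence $G=G_{P_\infty}=S_{q^3}\rtimes C_{(q^2-1)m/s}$; in the special case $|o|=q^3+1$ and $|G|=(q^3+1)q^3(q^2-1)m/s$, and since the Kummer group $C_{m/s}$ is normal in $G$ with $\mathcal Y_{n,s}/C_{m/s}=\mathcal H_q$, the induced map $G/C_{m/s}\to\Aut(\mathcal H_q)=\PGU(3,q)$ is an isomorphism by order count and splits, giving $G\cong\PGU(3,q)\rtimes C_{m/s}$. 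I expect the two genuinely hard points to be the exact determination of the Sylow $p$-subgroup and the orbit-length dichotomy for $P_\infty$, which is exactly where the arithmetic hypotheses $3\mid n$ and $\frac ms\mid(q^2-q+1)$ become decisive.
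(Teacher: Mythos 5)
Your lower bounds are correct (including the observation that when $3\mid n$ and $\frac{m}{s}\mid(q^2-q+1)$ the curve $\cY_{n,s}$ is a quotient of $\mathcal{GK}$ by a subgroup of the normal cyclic group $C_{q^2-q+1}$, so that $\PGU(3,q)$ descends; the paper instead builds the lift explicitly in Lemma~\ref{3divnlift}), and your skeleton---first compute $\aut(\cY_{n,s})_{P_\infty}$, then the orbit of $P_\infty$---is the same as the paper's. But three load-bearing steps are missing or false. First, the ``direct estimate'' $|\bar G|>84(g(\cY_{n,s})-1)$ is simply untrue for small $q$: for $(q,n,s)=(2,5,1)$ one has $|\bar G|=264$ while $84(g-1)=3780$, and the failure persists for every admissible $s$ whenever $q\le 5$. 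Consequently you cannot invoke Theorem~\ref{Hurwitz} at the outset. The paper applies Theorem~\ref{Hurwitz} only after reducing to the case where the orbit $O_\infty$ of $P_\infty$ contains a long $G$-orbit, which yields $|\aut(\cY_{n,s})|=|O_\infty|\cdot|G|>|G|^2>84(g-1)$; the complementary cases are settled by a separate Weierstrass-gap argument (Lemma~\ref{CondB}, via the regular differential $(z-c)^{q^3-1}dz$ and Lemma~\ref{gapsdiff}).

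Second, your determination of the stabilizer is unsubstantiated at both ends. For the $p$-part, the claim that a strictly larger $p$-group $\tilde S\supsetneq S_{q^3}$ would ``force a genus different from $g$'' via \eqref{eq1} and \eqref{eq2deuring} is incorrect: Lemma~\ref{normpsyl} of the paper shows that the Hurwitz formula balances exactly when each $\gamma\in\tilde S\setminus S_{q^3}$ satisfies $i(\gamma)=2$, so no numerical contradiction arises no matter how large $\tilde S$ is. The genuine argument is structural: $E_q$ is normal in the stabilizer (read off from the ramification filtration), hence $\aut(\cY_{n,s})_{P_\infty}/E_q$ acts on the curve $z^{m/s}=y^{q^2}-y$, whose automorphism group is known by \cite[Theorem 3.2]{BMZsep} (Proposition~\ref{stabY}). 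For the tame part, ``acting on $H(P_\infty)$'' gives no bound at all---every automorphism fixing $P_\infty$ preserves the semigroup; the paper instead quotients by $C_{m/s}$ onto $\cH_q$ and uses $|{\rm PGU}(3,q)_{\bar P_\infty}|=q^3(q^2-1)$ (Lemma~\ref{compfix}). Third, and most importantly, the containment $o\subseteq O\cup\{P_\infty\}$ is precisely what you never prove: in the paper it occupies Lemmas~\ref{CondB}, \ref{onenontame}, \ref{numborbits} and Proposition~\ref{normalizzano}, an intricate Hurwitz-formula case analysis excluding long $G$-orbits inside $O_\infty$ and inside a putative tame short orbit. Note also that Lemma~\ref{fundeq} does not ``govern whether a point of $O$ can be sent to $P_\infty$'' at that stage; it enters only afterwards (Lemma~\ref{3divnlift}), together with the Picard-order and semigroup argument, to decide when $\PGU(3,q)$ lifts once the normality of $C_{m/s}$ (equivalently, the action on $O\cup\{P_\infty\}$, Lemma~\ref{normaction}) has already been established.
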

The case $s=1$, that is the GGS curve $\mathcal{C}_n$, has been analyzed independently in \cite{GMP} and \cite{GOS}.
Recall that if $(n,s)=(3,1)$ then the curve $\mathcal{Y}_{n,s}$ is the so-called GK-curve, whose full automorphism group is well-known, see \cite{GK}.
From this point of view Theorem \ref{mainY} can be seen as a new characterization of the GK curve, in terms of its automorphism group, in the family of maximal curves $\mathcal{Y}_{n,s}$ constructed in \cite{TTT}.

We start by observing that an automorphism group $G$ of order $q^3(q^2-1)m/s$ and isomorphic to $S_{q^3}\rtimes C_{(q^2-1)\frac{m}{s}}$ can be found by hands, independently from the condition $3 \nmid n$ or $\frac{m}{s}\nmid(q^2-q+1)$. It is readily seen indeed that the following are automorphism groups of $\mathcal{Y}_{n,s}$

$$ S_{q^3}=\left\{(x,y,z)\mapsto(x+b^q y+c,y+b,z)\;\colon\; b,c\in\mathbb{F}_{q^2},c^q+c=b^{q+1}\right\}\cong E_q\,.\,E_{q^2}$$
where
$$E_q=\left\{ (x,y,z)\mapsto(x+c,y,z) \;\colon\; c\in\mathbb{F}_{q^2},c^q+c=0 \right\}$$
is the center of $S_{q^3}$. Also, if $a$ is a primitive element of $\mathbb{F}_{q^2}$ and $\lambda_a\in\mathbb{F}_{q^{2n}}$ satisfies $\lambda_a^\frac{m}{s}=a$, another automorphism group of $\mathcal{Y}_{n,s}$ is given by
$$C:= \langle \tau: (x,y,z) \mapsto (a^{q+1}x,ay,\lambda_a z) \rangle \cong C_{(q^2-1)\frac{m}{s}}.$$

We remark that $C$ contains the cyclic group $C_{m/s}={\rm Gal}(\mathbb{F}_{q^{2n}}(x,y,z)/ \mathbb{F}_{q^{2n}}(y,z))$.

Since $S_{q^3}$ and $C$ are of co-prime order and $C$ normalizes $S_{q^3}$, we get that $G:=\langle S_{q^3},C \rangle$ has order $q^3(q^2-1)m/s$ and is equal to $S_{q^3}\rtimes C_{(q^2-1)\frac{m}{s}}$. 

Note that the group $G$ has exactly one fixed point, namely $P_\infty$. This follows from the fact that $S_{q^3}$ has $\{P_\infty \}$ as its unique short orbit from Lemma \ref{prankpel}, and $S_{q^3}$ is normal in $G$. Theorem \ref{mainY} is proven by first showing that the stabilizer of $P_\infty$ in ${\rm Aut}(\mathcal{Y}_{n,s})$ is exactly $G$. To this aim, some preliminary technical lemmas are needed.

\begin{lemma} \label{contr1}
Let $\alpha \in E_q \setminus \{id\}$. Then $i(\alpha)=(q^n+1)/s+1$.
 \end{lemma}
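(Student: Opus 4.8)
The plan is to concentrate the whole contribution of $\alpha$ at $P_\infty$ and then read $i(\alpha)$ off the action of $\alpha$ on the coordinate function $x$. Recall that $i(\alpha)=\sum_{P=\alpha(P)}v_P(\alpha(t_P)-t_P)$, so only the points fixed by $\alpha$ contribute. The group $\langle\alpha\rangle$ is a nontrivial $p$-group inside $S_{q^3}$, and $\mathcal{Y}_{n,s}$ is $\mathbb{F}_{q^{2n}}$-maximal, hence of $p$-rank zero; thus Corollary \ref{1actionp2} applies and shows that $\alpha$ fixes exactly one point. Since $\alpha$ fixes $y$ and $z$ it fixes their common pole $P_\infty$ (the unique fixed point of $S_{q^3}$), so $P_\infty$ is the only fixed point and $i(\alpha)=v_{P_\infty}(\alpha(t)-t)$ for a local uniformizer $t$ at $P_\infty$.

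Rather than exhibiting $t$ explicitly, I would extract $i(\alpha)$ from $x$. By the definition of $E_q$ one has $\alpha(x)=x+c$ with $c\in\mathbb{F}_{q^2}$, $c^q+c=0$, $c\neq0$, so $\alpha(x)-x=c$ is a nonzero constant and $v_{P_\infty}(\alpha(x)-x)=0$. The local fact I will invoke is that for every $f$ with $p\nmid v_{P_\infty}(f)$ one has
\[
v_{P_\infty}(\alpha(f)-f)=v_{P_\infty}(f)+i(\alpha)-1 .
\]
Writing $f=t^{\mu}u$ with $u$ a unit at $P_\infty$ and $\alpha(t)=t+w$, $v_{P_\infty}(w)=i(\alpha)\geq 2$, a short expansion of $\alpha(f)-f$ shows that its leading term has valuation exactly $\mu+i(\alpha)-1$ precisely when $p\nmid\mu$ (which keeps the coefficient $\mu$ nonzero in characteristic $p$). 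Taking $f=x$, for which $\mu=v_{P_\infty}(x)=-(q+1)m/s$ by \eqref{div x Yns} and $p\nmid(q+1)m/s$ because $q+1\equiv m\equiv1\pmod p$, the identity becomes
\[
0=-(q+1)\frac{m}{s}+i(\alpha)-1,
\]
so $i(\alpha)=(q+1)\frac{m}{s}+1=\frac{q^n+1}{s}+1$, using $(q+1)m=q^n+1$.

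The one delicate point is the displayed local identity together with its coprimality hypothesis $p\nmid v_{P_\infty}(f)$: it is exactly this condition that forces the leading coefficient to survive in characteristic $p$, and it is why $x$ (whose pole order $(q+1)m/s$ is prime to $p$) must be used instead of $y$ or $z$, whose pole orders $qm/s$ and $q^3$ are divisible by $p$. As an independent check, $\mathbb{K}(x,y,z)/\mathbb{K}(y,z)$ is the generalized Artin--Schreier extension $x^q+x=y^{q+1}$ over the quotient $\mathcal{Y}_{n,s}/E_q\colon Z^{m/s}=Y^{q^2}-Y$, at whose place below $P_\infty$ the function $y^{q+1}$ has pole order $(q+1)m/s$ prime to $p$; the resulting single jump of the ramification filtration gives the same value $i(\alpha)=(q+1)m/s+1$ for every nontrivial $\alpha\in E_q$.
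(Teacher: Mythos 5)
Your proof is correct, and it takes a genuinely different route from the paper's. The paper argues globally: it identifies the fixed field of $E_q$ as $\mathbb{F}_{q^{2n}}(y,z)$, notes that all nontrivial elements of $E_q$ are conjugate under the complement $C$ (hence share one common contribution $A$), and then solves for $A$ from the Hurwitz genus formula applied to $\mathcal{Y}_{n,s}\to\mathcal{Y}_{n,s}/E_q$, using the known genera of both curves. You argue locally at $P_\infty$: zero $p$-rank makes $P_\infty$ the unique fixed point of $\alpha$, and the valuation identity $v_P(\alpha(f)-f)=v_P(f)+i(\alpha)-1$ --- valid precisely because $\alpha$ is wildly ramified ($i(\alpha)\geq 2$) and $p\nmid v_P(f)$, both of which you correctly flag --- applied to $f=x$, for which $\alpha(x)-x$ is a nonzero constant of valuation $0$, pins down $i(\alpha)$ with no genus input and no conjugacy argument. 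What your approach buys: it handles each element of $E_q$ individually, so it transfers verbatim to Proposition \ref{contr1X} for $\mathcal{X}_{a,b,n,s}$, where the paper must explicitly work around the failure of the conjugacy argument (the nontrivial elements of $E_q/E_{\bar{q}}$ are not all conjugate in $G$ there); what the paper's approach buys is brevity, since both genera are already available from \cite{TTT}. One citation caveat: Corollary \ref{1actionp2} as stated gives uniqueness of the fixed point only among the $\mathbb{F}_{q^2}$-rational points, while your sum defining $i(\alpha)$ runs over all fixed points on the curve; the stronger fact you actually need is \cite[Lemma 11.129]{HKT} (a $p$-element of a curve with zero $p$-rank fixes exactly one point), which is what that corollary's proof rests on and what the paper itself invokes in the proof of Lemma \ref{exact}, so this is an imprecision of citation rather than a genuine gap. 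Your closing Artin--Schreier cross-check (a single ramification jump of height $(q+1)m/s$ for $x^q+x=y^{q+1}$ over $\mathbb{F}_{q^{2n}}(y,z)$) is also sound and amounts to a third proof of the same value.
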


\begin{proof}
Note that $\alpha$ is of prime order $p$ since $E_q$ is elementary abelian, and $\alpha$ acts non-trivially on $x$, while both $y$ and $z$ are fixed by $\alpha$. This means that the fixed field of $E_q$ contains $\mathbb{F}_{q^{2n}}(y,z)$, where $z^{m/S}=y^{q^2}-y$.
Actually the fixed field of $E_q$ coincides with $\mathbb{F}_{q^{2n}}(y,z)$, as the extension $\mathbb{F}_{q^{2n}}(x,y,z)/ \mathbb{F}_{q^{2n}}(y,z)$ has degree $q=|E_q|$. 

All the elements of $E_q \setminus \{id\}$ are conjugate in $G$, because the subgroup $\langle \tau^{m(q+1)/s} \rangle \subset C$ of order $q-1$ acts transitively on $E_q \setminus \{id\}$ by conjugation. Hence each $\alpha \in E_q \setminus \{id\}$ gives the same contribution $A:=i(\alpha)$ to the different divisor of the extension $\mathbb{F}_{q^{2n}}(x,y,z)/ \mathbb{F}_{q^{2n}}(y,z)$. Since $g(\mathbb{F}_{q^{2n}}(y,z))=(m/s-1)(q^2-1)/2$, we get from the Hurwitz genus formula that
$$2g(\mathcal{Y}_{n,s})-2=\frac{(q^2-1)(q^n+1)}{s}-(q^3+1)=|E_q|\bigg( 2g(\mathbb{F}_{q^{2n}}(y,z))-2\bigg)+\sum_{\alpha \in E_q \setminus \{0\}}i(\alpha)$$
$$=q\bigg( (q^2-1)(m/s+1)-2\bigg)+A(q-1),$$
from which one gets the claim $i(\alpha)=(q^n+1)/s+1$.
\end{proof}

\begin{lemma} \label{contr2}
Let $\beta \in S_{q^3} \setminus E_q$. Then $i(\beta)=m/s+1$.
 \end{lemma}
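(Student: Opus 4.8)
The plan is to reduce the computation of $i(\beta)$ to a single local contribution at $P_\infty$, to show that all elements of $S_{q^3}\setminus E_q$ are conjugate in $G$ (so that they share a common contribution $A'$), and finally to pin down $A'$ from the global different of the extension $F/\mathbb{K}(z)$ together with the value already obtained in Lemma \ref{contr1}. Here $F$ denotes the function field of $\mathcal{Y}_{n,s}$ over $\mathbb{K}$.

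First I would recall, exactly as in the proof of Lemma \ref{exact}, that $F/\mathbb{K}(z)$ is Galois with group $S_{q^3}$ and that $P_\infty$ is its unique ramified place, totally ramified of index $q^3$. Consequently every non-trivial element of $S_{q^3}$ fixes $P_\infty$ and no other point of $\mathcal{Y}_{n,s}$, so that $i(\beta)=i_{P_\infty}(\beta)=v_{P_\infty}(\beta(t)-t)$ for a local uniformizer $t$ at $P_\infty$. In particular $i(\beta)$ depends only on $\beta$ and the curve, and is therefore invariant under conjugation inside $G$.

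Next I would prove that the $q^3-q$ elements of $S_{q^3}\setminus E_q$ form a single conjugacy class in $G$. Writing the group law $\theta_{b_1,c_1}\theta_{b_2,c_2}=\theta_{b_1+b_2,\,c_1+c_2+b_1^qb_2}$, a direct computation gives, on the one hand, $\tau\theta_{b,c}\tau^{-1}=\theta_{ab,\,a^{q+1}c}$, so that conjugation by powers of $\tau$ multiplies the parameter $b$ by the primitive element $a$ and hence realizes every non-zero value of $b$; and, on the other hand, $\theta_{b_0,c_0}\theta_{b,c}\theta_{b_0,c_0}^{-1}=\theta_{b,\,c+(b_0^qb-b_0b^q)}$, where for fixed $b\neq0$ the quantity $b_0^qb-b_0b^q=u-u^q$ with $u=b_0^qb$ runs over the whole trace-zero subspace of $\mathbb{F}_{q^2}$ as $b_0$ varies. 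Since the admissible values of $c$, namely those with $c^q+c=b^{q+1}$, form a single coset of that same trace-zero subspace, conjugation inside $S_{q^3}$ sweeps out all admissible $c$ for each fixed $b$. Combining the two operations shows that every $\theta_{b,c}$ with $b\neq0$ is $G$-conjugate to one fixed such element, whence $i(\beta)=A'$ is constant on $S_{q^3}\setminus E_q$.

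Finally I would determine $A'$ from the different. Applying the Hurwitz genus formula \eqref{eq1} to the extension $F/\mathbb{K}(z)$, whose base is rational, gives $d_{P_\infty}=2g(\mathcal{Y}_{n,s})-2+2q^3=\frac{(q^2-1)(q^n+1)}{s}+q^3-1$; since $P_\infty$ is the only ramified place, this different equals $\sum_{\sigma\in S_{q^3}\setminus\{\mathrm{id}\}} i_{P_\infty}(\sigma)$. Splitting the sum according to $E_q\setminus\{\mathrm{id}\}$ and $S_{q^3}\setminus E_q$ and inserting the value $i(\alpha)=(q^n+1)/s+1$ from Lemma \ref{contr1} yields $(q^3-q)A'=d_{P_\infty}-(q-1)\big((q^n+1)/s+1\big)$, which simplifies (using $q^n+1=(q+1)m$) to $A'=m/s+1$, as claimed. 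The main obstacle is the second step: proving that all non-central elements of $S_{q^3}$ are $G$-conjugate. This hinges on the explicit conjugation formulas, and in particular on verifying that the commutator map $b_0\mapsto b_0^qb-b_0b^q$ surjects onto the trace-zero subspace, so that the central parameter $c$ can indeed be moved over its entire coset; the scaling of $b$ by $\tau$ is comparatively immediate, and the remaining manipulation with the Hurwitz formula is routine bookkeeping.
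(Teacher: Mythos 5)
Your proof is correct, and it differs from the paper's in the one step that matters. Both arguments end with identical bookkeeping: Hurwitz applied to $F/\mathbb{K}(z)$ (rational base, $P_\infty$ the unique ramified place, exactly as in Lemma \ref{exact}), the different degree $\frac{(q^n+1)(q^2-1)}{s}+q^3-1$ split into the $E_q$-part known from Lemma \ref{contr1} and a common unknown contribution of $S_{q^3}\setminus E_q$, and a linear equation yielding $m/s+1$. The difference lies in how the constancy of $i(\beta)$ on $S_{q^3}\setminus E_q$ is justified. The paper gets it from the ramification filtration at $P_\infty$: the groups $S_{q^3}^{(i)}$ are normal (Lemma \ref{highnormal}) and are claimed to be only $S_{q^3}$, $E_q$ or trivial, so every non-central $\beta$ lies in exactly $j$ of them, with $j$ then computed by Hurwitz. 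You instead prove that $S_{q^3}\setminus E_q$ is a single conjugacy class of $G$, extending to the non-central elements the conjugation argument that the paper uses only for $E_q\setminus\{id\}$ in Lemma \ref{contr1}; constancy is then immediate since $i(\cdot)$ is a conjugacy invariant. Your route costs an explicit computation (surjectivity of the commutator map $b_0\mapsto b_0^qb-b_0b^q$ onto the trace-zero line, plus the $\tau$-scaling of $b$), but it buys robustness: the paper's stated reason, that ``$E_q$ is the only proper normal subgroup in $S_{q^3}$'', is not literally true (every subgroup of the center $E_q$ is normal in $S_{q^3}$, as is every subgroup containing $E_q=[S_{q^3},S_{q^3}]$); what the filtration argument really needs is that the $S_{q^3}^{(i)}$ are moreover invariant under conjugation by $C$, and the only $C$-invariant normal subgroups are indeed $\{id\}$, $E_q$ and $S_{q^3}$ --- which is precisely the content of your conjugation formulas. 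Conversely, the paper's approach buys the explicit filtration jumps ($S^{(i)}=S_{q^3}$ for $i\leq m/s$ and $S^{(i)}=E_q$ for $m/s<i\leq (q^n+1)/s$), information reused verbatim in Lemma \ref{normpsyl} and Remark \ref{remop}; your conjugacy argument does not produce this by itself, though it can be recovered a posteriori. Finally, two cosmetic points: with the substitution convention the group law reads $\theta_{b_1,c_1}\theta_{b_2,c_2}=\theta_{b_1+b_2,\,c_1+c_2+b_1b_2^q}$ and the conjugation reads $\tau\theta_{b,c}\tau^{-1}=\theta_{a^{-1}b,\,a^{-(q+1)}c}$; your formulas differ only by harmless order/sign conventions, and neither discrepancy affects the argument.
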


\begin{proof}
We first observe that the fixed field $F_{q^3}$ of $S_{q^3}$ is the rational function field $\mathbb{F}_{q^{2n}}(z)$. In fact, $z$ is fixed by $S_{q^3}$, and the extension $\mathbb{F}_{q^{2n}}(x,y,z)/\mathbb{F}_{q^{2n}}(z)$ has degree $\deg((z)_{\infty})=\deg(q^3 P_{\infty})=|S_{q^3}|$. Since $E_q$ is the only proper normal subgroup in $S_{q^3}$, the ramification groups $S_{q^3}^{(i)}$ either coincide with $S_{q^3}$, or with $E_q$, or are trivial. This implies that the degree of the different divisor of the extension $\mathbb{F}_{q^{2n}}(x,y,z)/ F_{q^3}$ can be written as $(n-j)(q-1)+j(q^3-1)=n(q-1)+j(q^3-q)$ where $n$ is the number of non-trivial ramification groups (including the 0th ramification group), and $j$ the number of ramification groups coinciding with $S_{q^3}$. From Lemma \ref{contr1} each element of $E_q \setminus \{id\}$ is contained in exactly $(q^n+1)/s+1$ ramification groups, which implies that $n=(q^n+1)/s+1$ (because all the higher ramification groups contained properly in $E_q$ need to be trivial). From the Hurwitz genus formula and Lemma \ref{contr1} we obtain 
$$2g(\mathcal{Y}_{n,s})-2=|S_{q^3}|\bigg( 2g(\mathbb{F}_{q^{2n}}(z))-2\bigg)+\left(\frac{q^n+1}{s}+1\right)(q-1)+ j(q^3-q),$$
which yields $j=m/s+1$ by direct computation. Since we have only 2 possible higher ramification groups, the elements of $S_{q^3} \setminus E_q$ give all the same contribution to the different divisor, namely $m/s+1$.
\end{proof}

Recall that from Lemma \ref{highnormal}, $\aut(\mathcal{Y}_{n,s})_{P_\infty}=\tilde{S} \rtimes \tilde{C}$, where $\tilde S$ is the Sylow $p$-subgroup of $\aut(\mathcal{Y}_{n,s})_{P_\infty}$ and $\tilde C$ is a cyclic $p^{\prime}$-group. Our first aim is to show that $\tilde{C}$ coincides with $C$.

We denote by $\bar{P}_{\infty}$ the point of $\mathcal{H}_q$ lying below $P_\infty$.

\begin{lemma} \label{compfix}
Let $G \subseteq \aut(\mathcal{Y}_{n,s})_{P_\infty}=\tilde{S} \rtimes \tilde{C}$, where $\tilde S$ is the Sylow $p$-subgroup of $\aut(\mathcal{Y}_{n,s})_{P_\infty}$ and $\tilde C$ is cyclic of order $d$ where $(d,p)=1$. Then $|\tilde C|=|C|=(q^2-1)\frac{m}{s}$.
\end{lemma}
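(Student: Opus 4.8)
The plan is to squeeze $d=|\tilde C|$ between $(q^2-1)\tfrac{m}{s}$ and one of its divisors. For the lower bound, observe that $C$ fixes $P_\infty$, since the generator $\tau$ merely rescales $x,y,z$; hence $C\subseteq\aut(\mathcal{Y}_{n,s})_{P_\infty}=\tilde S\rtimes\tilde C$. As $|C|=(q^2-1)\tfrac{m}{s}$ is prime to $p$ while $\tilde S$ is the normal Sylow $p$-subgroup (Lemma \ref{highnormal}), the group $C$ is contained in a complement of $\tilde S$, and all such complements are conjugate to $\tilde C$ by Schur--Zassenhaus. Thus $(q^2-1)\tfrac{m}{s}=|C|\le|\tilde C|=d$, and it remains to prove the reverse divisibility $d\mid(q^2-1)\tfrac{m}{s}$.

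Fix a generator $\tilde c$ of $\tilde C$. The first step is to show that $z$ is an eigenvector of $\tilde c$. By Lemma \ref{prankpel}, $\tilde c$ preserves $\mathcal{Y}_{n,s}(\mathbb{F}_{q^2})$; a direct inspection of \eqref{Yns} shows that this set equals $O\cup\{P_\infty\}$, because every affine $\mathbb{F}_{q^2}$-rational point has $y$-coordinate $\beta\in\mathbb{F}_{q^2}$, forcing $\gamma^{m/s}=\beta^{q^2}-\beta=0$ and hence $\gamma=0$, i.e.\ the point lies in $O$. Since $\tilde c$ fixes $P_\infty$, it therefore permutes $O$, so by \eqref{div z Yns} the divisor $(z)=\sum_{P\in O}P-q^3P_\infty$ is $\tilde c$-invariant. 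Two functions with the same principal divisor differ by a nonzero constant, whence $\tilde c(z)=\mu z$ for some $\mu\in\mathbb{K}^\times$.

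Next I would determine the order of $\mu$ and then constrain $\mu$ itself. From $\tilde c^{\,k}(z)=\mu^k z$, the relation $\mu^k=1$ forces $\tilde c^{\,k}$ to fix $z$, so $\tilde c^{\,k}\in\mathrm{Aut}(F/\mathbb{K}(z))=S_{q^3}$, the Galois group identified in the proof of Lemma \ref{exact}. Being at once a $p'$-element and a member of the $p$-group $S_{q^3}$, $\tilde c^{\,k}$ must be trivial, so $\mathrm{ord}(\mu)=d$. The heart of the argument is then to apply $\tilde c$ to the defining equation $z^{m/s}=y^{q^2}-y$: setting $w:=\tilde c(y)$ gives $w^{q^2}-w=\mu^{m/s}(y^{q^2}-y)$. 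Writing $w=\lambda y+r$ with $\lambda\in\mathbb{K}^\times$ and $r$ of pole order strictly below $qm/s$ at $P_\infty$, and using that $t\mapsto t^{q^2}$ is additive in characteristic $p$, one compares coefficients at the two pole orders $q^3m/s$ and $qm/s$. Because $q\nmid m/s$, the pole order $qm/s$ is not a multiple of $q^2$, so $r^{q^2}$ (whose pole order is divisible by $q^2$) cannot interfere; this yields simultaneously $\lambda^{q^2}=\mu^{m/s}$ and $\lambda=\mu^{m/s}$. Hence $\mu^{m/s}=\lambda=\lambda^{q^2}$ lies in $\mathbb{F}_{q^2}^\times$, so $\mu^{(q^2-1)m/s}=1$ and $d=\mathrm{ord}(\mu)$ divides $(q^2-1)\tfrac{m}{s}$.

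Combining the two bounds gives $d=(q^2-1)\tfrac{m}{s}=|C|$, as claimed. I expect the coefficient comparison in the last paragraph to be the main obstacle: one must check carefully that, in the expansion of $w^{q^2}-w$ at $P_\infty$, the only pole orders that can carry the coefficients being matched are $q^3m/s$ and $qm/s$, which is exactly where the additivity of the $q^2$-power map and the coprimality $\gcd(m,q)=1$ enter.
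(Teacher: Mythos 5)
Your overall strategy (lower bound via Schur--Zassenhaus, upper bound by showing $\tilde c(z)=\mu z$ with $\mathrm{ord}(\mu)=d$ and then forcing $\mu^{m/s}\in\mathbb{F}_{q^2}^\times$ by a pole-order comparison) is different from the paper's, and the function-theoretic core of it is sound: $y$ and $\tilde c(y)$ have poles only at $P_\infty$, both of exact order $qm/s$, and $q\nmid m/s$ correctly rules out interference from $r^{q^2}$, so the coefficient comparison does give $\lambda^{q^2}=\mu^{m/s}=\lambda$ and hence $d\mid(q^2-1)\frac{m}{s}$. But there is a genuine gap at the step where you claim that $\tilde c$ permutes $O\cup\{P_\infty\}$: you justify it by saying that Lemma \ref{prankpel} makes $\tilde c$ preserve $\mathcal{Y}_{n,s}(\mathbb{F}_{q^2})$. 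The curve $\mathcal{Y}_{n,s}$ is $\mathbb{F}_{q^{2n}}$-maximal, not $\mathbb{F}_{q^2}$-maximal, so Lemma \ref{prankpel} only yields that automorphisms preserve $\mathcal{Y}_{n,s}(\mathbb{F}_{q^{2n}})$ --- a much larger set than $O\cup\{P_\infty\}$. The statement you actually need, that automorphisms preserve $O\cup\{P_\infty\}$, is not free: for the full group $\aut(\mathcal{Y}_{n,s})$ it is precisely the content of Proposition \ref{normalizzano}, one of the main technical results of the paper, obtained only after a lengthy analysis of short orbits. So, as written, this step fails.

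The gap is repairable in your restricted setting, because you only need the claim for elements of $\tilde C$. Replace $\tilde C$ by its conjugate containing $C$ (this does not change $d$, and you already invoked conjugacy of complements for the lower bound). Since $\tilde C$ is cyclic, $C_{m/s}$ is its unique subgroup of order $m/s$, hence normal (indeed characteristic) in $\tilde C$; therefore $\tilde C$ permutes the fixed-point set of $C_{m/s}$, which is exactly $O\cup\{P_\infty\}$ because $\mathbb{F}_{q^{2n}}(x,y,z)/\mathbb{F}_{q^{2n}}(x,y)$ is a Kummer extension totally ramified exactly over these $q^3+1$ points and unramified elsewhere. With this substitute, your argument goes through and constitutes a proof genuinely different from the paper's: after the same conjugation step, the paper passes to the quotient $\mathcal{Y}_{n,s}/C_{m/s}=\mathcal{H}_q$ and bounds $|\tilde C/C_{m/s}|\le q^2-1$ using the known order $q^3(q^2-1)$ of a point stabilizer in ${\rm Aut}(\mathcal{H}_q)\cong{\rm PGU}(3,q)$, whereas you stay on $\mathcal{Y}_{n,s}$ and extract the bound directly from the defining equation; your route avoids quoting the structure of ${\rm PGU}(3,q)$ at the cost of the eigenvalue computation.
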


\begin{proof} 
Up to conjugation we can choose $\tilde C$ such that $C \subseteq \tilde C$. Since $\tilde C$ is cyclic, $C_{m/s} \subseteq C$ is a normal subgroup of $\tilde C$. 
The quotient group $\tilde C/C_{m/s}$ is an automorphism group of $\mathcal{Y}_{n,s}/C_{m/s}=\mathcal{H}_q$ fixing $\bar{P}_\infty$. 
Since the stabilizer of $\bar{P}_\infty$ in ${\rm Aut}(\cH_q)$ has order $q^3(q^2-1)$ and $\tilde{C}/C_{m/s}$ is a $p^{\prime}$-group, we get that $|\tilde C/C_{m/s}| \leq q^2-1$. However since $\tilde C$ contains $C$ we have $q^2-1=|C/C_{m/s}| \leq |\tilde C/C_{m/s}|$, which yields $C=\tilde C$.
\end{proof}

To complete the proof of our intermediate statement $\aut(\mathcal{Y}_{n,s})_{P_\infty}=G$, we need to show that $\tilde S=S_{q^3}$.

\begin{lemma} \label{normpsyl}
Let $\gamma \in \tilde S \setminus S_{q^3}$. Then $i(\gamma)=2$. In particular both $E_q$ and $S_{q^3}$ are normal subgroups of $\tilde S$.
\end{lemma}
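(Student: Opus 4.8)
The plan is to reduce the statement to a single assertion about the lower ramification filtration of $\tilde S$ at $P_\infty$, and then to pin down the first jump of that filtration. First I would record that $\gamma$ is a nontrivial $p$-element fixing $P_\infty$; since $\cY_{n,s}$ has zero $p$-rank, Corollary \ref{1actionp2} (equivalently \cite[Lemma 11.129]{HKT}) shows that $\gamma$ fixes no point other than $P_\infty$, so that $i(\gamma)=v_{P_\infty}(\gamma(t)-t)$ for a local uniformizer $t$ at $P_\infty$. As $\tilde S$ is a $p$-group, Lemma \ref{highnormal} gives $\tilde S^{(0)}=\tilde S^{(1)}=\tilde S$, whence $i(\gamma)\geq 2$ for every nontrivial $\gamma\in\tilde S$. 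Next I would use the standard compatibility $S_{q^3}^{(i)}=\tilde S^{(i)}\cap S_{q^3}$ of lower ramification groups with subgroups, together with Lemmas \ref{contr1} and \ref{contr2}: these show that the first jump of $S_{q^3}$ occurs at level $m/s$, i.e. $S_{q^3}^{(i)}=S_{q^3}$ for $0\le i\le m/s$ and $S_{q^3}^{(i)}=E_q$ for $m/s<i\le(q+1)m/s$. Since $m/s\geq 2$ (because $s\ne m$), this yields $S_{q^3}\subseteq\tilde S^{(2)}$. Consequently the assertion ``$i(\gamma)=2$ for all $\gamma\in\tilde S\setminus S_{q^3}$'' is equivalent to the single equality $\tilde S^{(2)}=S_{q^3}$.

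To establish $\tilde S^{(2)}\subseteq S_{q^3}$ I would bring in the tame action of $\tilde C=C$ (Lemma \ref{compfix}) on the graded pieces of the filtration. Writing $\tau(t)=\zeta t+O(t^2)$, a direct computation shows that conjugation by $\tau$ multiplies the leading coefficient of $\sigma(t)-t$ by $\zeta^{\,i}$ whenever $\sigma\in\tilde S^{(i)}\setminus\tilde S^{(i+1)}$, so that $\tau$ acts on $\tilde S^{(i)}/\tilde S^{(i+1)}$ by the scalar $\zeta^{\,i}$. Matching this with the conjugation action of $\tau$ already computed on $S_{q^3}/E_q$ (multiplication by $a$, occurring at level $m/s$) and on $E_q$ (multiplication by $a^{q+1}$, at level $(q+1)m/s$) forces $\zeta^{m/s}=a$, and by faithfulness of the tame cotangent character $\zeta$ has order $(q^2-1)m/s$. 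To convert this into information on $i(\gamma)$ I would use the function $x$: since $v_{P_\infty}(x)=-(q+1)m/s$ is prime to $p$, one has $i(\gamma)=v_{P_\infty}(\gamma(x)-x)+(q+1)m/s+1$, so $i(\gamma)=2$ amounts to $\gamma(x)-x$ having pole order exactly $(q+1)m/s-1$ at $P_\infty$, a non-gap of $H(P_\infty)$. Because $\gamma\notin S_{q^3}={\rm Gal}(\mathbb{K}(\cY_{n,s})/\mathbb{K}(z))$, the function $z$ is moved by $\gamma$; combining this with the eigenvalue constraint above and the telescopic structure $H(P_\infty)=\langle qm/s,\,q^3,\,(q+1)m/s\rangle$ should rule out any ramification group strictly between $\tilde S^{(1)}=\tilde S$ and $S_{q^3}$, i.e. prove $\tilde S^{(2)}=S_{q^3}$. \textbf{This is the main obstacle}: one must control the pole order of $\gamma(x)-x$ — equivalently, show the first jump of $\tilde S$ already occurs at level $1$ — for an element $\gamma$ for which no explicit formula is available. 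I expect the cleanest route is to descend the computation to the Hermitian quotient $\cY_{n,s}/C_{m/s}=\mathcal{H}_q$, where the Sylow $p$-subgroup of a point stabilizer in $\PGU(3,q)$ is known to have order exactly $q^3$, or else a Riemann–Hurwitz count for $\cY_{n,s}\to\cY_{n,s}/\tilde S^{(2)}$.

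Finally, the two normality statements follow immediately once $\tilde S^{(2)}=S_{q^3}$ is known. Indeed $\tilde S^{(2)}$ is normal in $\tilde S$ by Lemma \ref{highnormal}(3), so $S_{q^3}\trianglelefteq\tilde S$; and for every $i\geq 2$ we then have $\tilde S^{(i)}\subseteq\tilde S^{(2)}=S_{q^3}$, hence $\tilde S^{(i)}=\tilde S^{(i)}\cap S_{q^3}=S_{q^3}^{(i)}$. In particular $\tilde S^{(m/s+1)}=E_q$, which, being a ramification group of $\tilde S$, is normal in $\tilde S$ as well. This also records $i(\gamma)=2$ for every $\gamma\in\tilde S\setminus S_{q^3}$, completing the statement.
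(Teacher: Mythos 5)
Your proposal contains a genuine gap, and you flag it yourself: the entire content of the lemma is the claim $i(\gamma)=2$ (equivalently, in your reformulation, $\tilde S^{(2)}=S_{q^3}$), and this is precisely the step you leave open, offering only candidate strategies. Neither candidate works as stated. Descending to the Hermitian quotient $\cY_{n,s}/C_{m/s}=\mathcal{H}_q$ is not available at this stage: for $\tilde S$ to act on that quotient you would need $\tilde S$ to normalize $C_{m/s}$, and the normality of $C_{m/s}$ in anything larger than $G$ is exactly what the remainder of the paper (Lemmas \ref{normaction}--\ref{CondB} and Propositions \ref{stabY}, \ref{normalizzano}) is devoted to establishing; assuming it here would be circular. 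The tame-character computation you sketch (conjugation by $\tau$ acting on $\tilde S^{(i)}/\tilde S^{(i+1)}$ by $\zeta^i$) constrains only those levels $i$ at which the filtration actually jumps, so by itself it does not exclude a hypothetical jump strictly between level $1$ and level $m/s$; you would still need an independent argument, which is missing.

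The paper closes this gap with a short global count that your proposal never executes: apply the Hurwitz genus formula to $\cY_{n,s}\to\cY_{n,s}/\tilde S$. The fixed field of $\tilde S$ is rational (it sits inside the rational fixed field $\mathbb{F}_{q^{2n}}(z)$ of $S_{q^3}$, so L\"uroth applies), hence
\[
2g(\cY_{n,s})-2=-2|\tilde S|+\Bigl(\tfrac{q^n+1}{s}+1\Bigr)(q-1)+\Bigl(\tfrac{m}{s}+1\Bigr)(q^3-q)+\sum_{\gamma\in\tilde S\setminus S_{q^3}}i(\gamma),
\]
using Lemmas \ref{contr1} and \ref{contr2} for the elements of $S_{q^3}\setminus\{id\}$. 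Bounding each $i(\gamma)\geq 2$ gives a right-hand side of at least $-2|\tilde S|+\bigl(\tfrac{q^n+1}{s}+1\bigr)(q-1)+\bigl(\tfrac{m}{s}+1\bigr)(q^3-q)+2(|\tilde S|-q^3)$; the terms in $|\tilde S|$ cancel, and the remaining quantity equals $2g(\cY_{n,s})-2$ exactly. So the inequality is an equality term by term, forcing $i(\gamma)=2$ for every $\gamma\in\tilde S\setminus S_{q^3}$, with no information about $|\tilde S|$ needed. Your reduction of the normality assertions to the filtration (once $\tilde S^{(2)}=S_{q^3}$ is known, normality of $S_{q^3}$ follows from Lemma \ref{highnormal}, and $E_q$ is normal either as a deeper ramification group or as the center of $S_{q^3}$) is correct and matches the paper; only the core analytic step is absent.
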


\begin{proof}
Since the fixed field of $S_{q^3}$ is rational and $S_{q^3}\subseteq\tilde{S}$, the fixed field of $\tilde S$ is also rational. From Lemmas \ref{contr1} and \ref{contr2} we know that $i(\alpha) = (q^n+1)/s+1$ and $i(\beta) = m/s+1$ for all $\alpha \in E_q \setminus \{id\}$ and $\beta \in S_{q^3} \setminus E_q$. Furthermore for all $\gamma \in \tilde S \setminus S_{q^3}$ one has $i(\gamma) \geq 2$ as $\tilde S=\tilde S_{P_\infty}^{(0)}=\tilde S_{P_\infty}^{(1)}$.
Then the Hurwitz genus formula applied to $\cY_{n,s}\to\cY_{n,s}/\tilde{S}$ gives
$$\frac{(q^n+1)(q^2-1)}{s}-(q^3+1) \geq -2|\tilde S|+\bigg(\frac{(q^n+1)}{s}+1\bigg)(q-1)+\bigg(\frac{m}{s}+1\bigg)(q^3-q)+2(|\tilde S|-q^3).$$
Since the right and left hand-sides coincide, we deduce that equality must hold, 
that is $i(\gamma)=2$ for all $\gamma\in\tilde{S}\setminus S_{q^3}$.
In particular $\tilde S=\tilde S_{P_\infty}^{(0)}=\tilde S_{P_\infty}^{(1)}$, $S_{q^3}=\tilde S_{P_\infty}^{(2)}=\ldots=\tilde S_{P_\infty}^{(m/s)}$, $E_q=\tilde S_{P_\infty}^{(m/s+1)}=\ldots=\tilde S_{P_\infty}^{(q^n+1)/s}$ and $\tilde S_{P_\infty}^{(i)}=\{id\}$ for all $i \geq (q^n+1)/s+1$. Now, $S_{q^3}$ is normal in $\tilde S$ by Lemma \ref{highnormal} item 2. The subgroup $E_q$ is hence also normal in $\tilde S$, being the center of $S_{q^3}$ and hence a characteristic subgroup of $S_{q^3}$. 
\end{proof}

\begin{remark} \label{remop}
The statement about the normality of $E_q$ and $S_{q^3}$ can be strengthened by looking at the entire stabilizer $\aut(\mathcal{Y}_{n,s})_{P_\infty}$ and not only at $\tilde S$. Since Lemma \ref{highnormal} implies that higher ramification groups are normal in the entire stabilizer of a point and $\tilde S=\aut(\mathcal{Y}_{n,s})_{P_\infty}^{(1)}$, we have that $S_{q^3}$ and $E_q$ are normal in $\aut(\mathcal{Y}_{n,s})_{P_\infty}$.
\end{remark}

With the previous lemmas and remarks, we are in a position to prove our aimed intermediate statement $\aut(\mathcal{Y}_{n,s})_{P_\infty}=G$, which we observe being true independently from the condition $3\nmid n$ or $\frac{m}{s}\nmid(q^2-q+1)$ (that indeed we have never used so far).

\begin{proposition} \label{stabY}
The stabilizer $\aut(\cY_{n,s})_{P_{\infty}}$ of $P_\infty$ in $\aut(\cY_{n,s})$ is $G$.
\end{proposition}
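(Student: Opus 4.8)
The plan is to show that both factors of the decomposition $\aut(\cY_{n,s})_{P_\infty}=\tilde S\rtimes\tilde C$ from Lemma \ref{highnormal} are the expected ones. The tame part is already settled: applying Lemma \ref{compfix} to $G=\aut(\cY_{n,s})_{P_\infty}$ itself gives $\tilde C=C$. Hence the statement reduces to proving $\tilde S=S_{q^3}$, and since $S_{q^3}\subseteq\tilde S$ this is the purely numerical bound $|\tilde S|\le q^3$.

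To obtain this bound I would use that $\tilde S$ is a $p$-group acting on the maximal, hence zero $p$-rank, curve $\cY_{n,s}$: by \cite[Lemma 11.129]{HKT} (compare Corollary \ref{1actionp2}) it fixes a single point, namely $P_\infty$, and is semiregular on $\cY_{n,s}\setminus\{P_\infty\}$. Consequently, as soon as $\tilde S$ stabilizes the set $O$, its action on $O$ is semiregular and $|\tilde S|$ divides $|O|=q^3$, giving the bound. So everything reduces to showing that $\tilde S$ fixes $O$ setwise. Here I would describe $O$ intrinsically: the generator $(x,y,z)\mapsto(x,y,\zeta z)$ of $C_{m/s}$ fixes exactly the points with $z=0$ together with $P_\infty$, so $O\cup\{P_\infty\}=\mathrm{Fix}(C_{m/s})$. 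Therefore, once $C_{m/s}$ is normal in $\aut(\cY_{n,s})_{P_\infty}$, the stabilizer permutes $\mathrm{Fix}(C_{m/s})$ and, fixing $P_\infty$, stabilizes $O$; equivalently one descends along $\cY_{n,s}\to\cY_{n,s}/C_{m/s}=\cH_q$, mapping the stabilizer into $\aut(\cH_q)_{\bar P_\infty}\le\PGU(3,q)$ with kernel $C_{m/s}$, whence $\tilde S$ (which meets $C_{m/s}$ trivially) embeds into a point-stabilizer of $\PGU(3,q)$, whose Sylow $p$-subgroup has order $q^3$. Towards the normality of $C_{m/s}$ I would invoke Remark \ref{remop}: since $S_{q^3}$ is normal in the stabilizer, so is its centralizer there, and a direct check shows that $C_{m/s}$ centralizes $S_{q^3}$ while inside $C$ only $C_{m/s}$ does, so that $C_{m/s}$ is the Hall $p'$-part of that centralizer.

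The step I expect to be the genuine obstacle is precisely this normality of $C_{m/s}$, equivalently the assertion that $\tilde S$ centralizes $C_{m/s}$, equivalently that $\tilde S$ fixes $O$. Pure group theory is not enough and is in fact circular: $C_{m/s}$ acts on $\tilde S/S_{q^3}$ by scaling through a primitive $(m/s)$-th root of unity, so this quotient is forced to be trivial exactly when the conclusion already holds. The decisive input must come from the arithmetic of the cover $\cY_{n,s}\setminus\{P_\infty\}\to\mathbb{A}^1$ given by $z$, which is étale with group $S_{q^3}$ by Lemma \ref{exact}: an element of $\tilde S\setminus S_{q^3}$ would force a nontrivial translation $z\mapsto z+b$, $b\ne0$, to lift to $\cY_{n,s}$, that is a function $w\in L(qm/s\,P_\infty)$ with $w^{q^2}-w=(z+b)^{m/s}$. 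Equivalently, via Lemma \ref{fundeq}, $O\cup\{P_\infty\}$ must coincide with the locus of points $P$ with $\ord\,[P-P_\infty]\mid(q^n+1)/s$, whose cardinality must then be exactly $q^3+1$. Ruling out such a $b$ — i.e. pinning this locus down to $q^3+1$ points — is where I would expect the real effort to go.
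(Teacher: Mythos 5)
Your reduction is sound as far as it goes: $\tilde C=C$ does follow from Lemma \ref{compfix}, and if $\tilde S$ stabilized $O$ setwise, then semiregularity of a $p$-group outside its unique fixed point (Corollary \ref{1actionp2}, zero $p$-rank) would indeed force $|\tilde S|$ to divide $|O|=q^3$, hence $\tilde S=S_{q^3}$ and $\aut(\cY_{n,s})_{P_\infty}=G$. But the proposal never proves the one statement carrying all the content, namely that $\tilde S$ stabilizes $O$ (equivalently, that $C_{m/s}$ is normal in the stabilizer), and you say so yourself. The centralizer argument does not close it: writing $Z$ for the centralizer of $S_{q^3}$ in $\aut(\cY_{n,s})_{P_\infty}$, the group $Z$ is normal there, but it may contain $p$-elements of $\tilde S\setminus S_{q^3}$; its $p'$-part need not have order $m/s$ (you only control $Z\cap C$, not the image of $Z$ in ${\aut}(\cY_{n,s})_{P_\infty}/\tilde S\cong C$, and a product $\sigma c$ with $\sigma\in\tilde S$, $c\in C\setminus C_{m/s}$ can centralize $S_{q^3}$ without either factor doing so); and even when $C_{m/s}$ is a Hall $p'$-subgroup of $Z$, Hall subgroups are only determined up to conjugacy, so no normality follows. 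As you note, pure group theory is consistent with $\tilde S/S_{q^3}$ being a nontrivial group of translations $z\mapsto z+b$ permuted by $C_{m/s}$-conjugation. The arithmetic completion you gesture at (ruling out lifts of $z\mapsto z+b$, or pinning down the locus of points $P$ with ${\rm ord}\,[P-P_\infty]$ dividing $(q^n+1)/s$) is left entirely as a sketch, so the proof has a genuine gap exactly at its crux.

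For comparison, the paper closes precisely this step by ramification theory rather than through the action on $O$. In Lemma \ref{normpsyl} the Hurwitz genus formula is applied to $\cY_{n,s}\to\cY_{n,s}/\tilde S$ (the quotient is rational by L\"uroth, as it lies below the rational fixed field of $S_{q^3}$); the unknown order $|\tilde S|$ cancels between the two sides and the inequality is tight, forcing $i(\gamma)=2$ for every $\gamma\in\tilde S\setminus S_{q^3}$. This pins down the whole ramification filtration at $P_\infty$ and exhibits $S_{q^3}$ and $E_q$ as higher ramification groups, hence (Remark \ref{remop}) as normal subgroups of the full stabilizer. Quotienting by $E_q$, the stabilizer maps with kernel $E_q$ into ${\rm Aut}(\mathbb{F}_{q^{2n}}(y,z))$ for the separated-polynomial curve $z^{m/s}=y^{q^2}-y$, whose full automorphism group has order exactly $q^2(q^2-1)m/s$ by \cite[Theorem 3.2]{BMZsep}; since $|G/E_q|$ already attains this bound, the stabilizer equals $G$. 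In particular, the bound $|\tilde S|\le q\cdot q^2=q^3$ that you need falls out of this quotient argument with no information about how $\tilde S$ acts on $O$. If you want to salvage your outline, this is the missing input: use the normality of $E_q$ (not just $S_{q^3}$) from Remark \ref{remop}, and replace your unproved claim by the descent to $\mathbb{F}_{q^{2n}}(y,z)$ together with the cited theorem.
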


\begin{proof}
By Lemma \ref{normpsyl} and Remark \ref{remop}, $E_q$ is a normal subgroup of $\aut(\mathcal{Y}_{n,s})_{P_\infty}$. Hence $\aut(\mathcal{Y}_{n,s})_{P_\infty}/E_q$ is an automorphism group of the fixed field of $E_q$, that is, $\mathbb{F}_{q^{2n}}(y,z)$ with $y^{q^2}-y=z^{m/s}$.
Since $\gcd(q^2+1,q^n+1)=2$ for $n$ odd, we get from \cite[Theorem 3.2]{BMZsep} that 

$$\frac{m}{s}q^2(q^2-1)=|G/E_q| \leq |\aut(\mathcal{Y}_{n,s})_{P_\infty}/E_q| \leq |\aut(\mathbb{F}_{q^{2n}}(y,z))|=\frac{m}{s}q^2(q^2-1),$$

which implies $\aut(\mathcal{Y}_{n,s})_{P_\infty}=G$.
\end{proof}


Define the following set of rational points of $\mathcal{Y}_{n,s}$:
$$O=\{P_{(\alpha_i,\beta_j,0)}\in\mathcal{Y}_{n,s} \;\colon\; \alpha_i,\beta_j \in \mathbb{F}_{q^2} , \, \beta_j^{q+1}=\alpha_i^q+\alpha_i\}.$$
The following is an easy but useful lemma.

\begin{lemma} \label{normaction}
The group $C_{m/s}$ is normal in $\aut(\mathcal{Y}_{n,s})$ if and only if $\aut(\mathcal{Y}_{n,s})$ acts on the set $O \cup \{P_\infty\}$. 
Furthermore, if $C_{m/s}$ is normal in $\aut(\mathcal{Y}_{n,s})$, then $\aut(\mathcal{Y}_{n,s})/C_{m/s}$ is an automorphism group of the Hermitian function field $\mathbb{F}_{q^{2n}}(x,y)$ and either $\aut(\mathcal{Y}_{n,s})=G$ or $|\aut(\mathcal{Y}_{n,s})/C_{m/s}|=q^3(q^2-1)(q^3+1)$.
In the latter case, $\aut(\mathcal{Y}_{n,s})/C_{m/s}$ is isomorphic to ${\rm PGU}(3,q)$ and acts on $\mathbb{F}_{q^{2n}}(x,y)$ as ${\rm PGU}(3,q)$ in its natural action.
\end{lemma}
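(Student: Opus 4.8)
The plan is to prove everything by identifying the set $O\cup\{P_\infty\}$ with the fixed locus of $C_{m/s}$ and, dually, by identifying $C_{m/s}$ with the pointwise stabiliser (equivalently, the kernel of the permutation action) of $\aut(\mathcal{Y}_{n,s})$ on that set. Recall from Section \ref{sec:prel} that $O\cup\{P_\infty\}$ is exactly the ramification locus of the cyclic cover $\mathcal{Y}_{n,s}\to\mathcal{Y}_{n,s}/C_{m/s}=\mathcal{H}_q$, all $q^3+1$ of these points being totally ramified. In particular every point of $O\cup\{P_\infty\}$ is fixed by $C_{m/s}$, while $C_{m/s}$ moves every other point (the cover being unramified elsewhere); thus $O\cup\{P_\infty\}$ is precisely the set of fixed points of the group $C_{m/s}$.

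For the forward implication of the equivalence, assume $C_{m/s}\trianglelefteq\aut(\mathcal{Y}_{n,s})$. Then $\aut(\mathcal{Y}_{n,s})$ permutes the $C_{m/s}$-orbits, and in particular preserves the union of the trivial ones, i.e.\ the fixed locus of $C_{m/s}$, which is $O\cup\{P_\infty\}$. For the converse, I first compute the pointwise stabiliser $K$ of $O\cup\{P_\infty\}$ in $\aut(\mathcal{Y}_{n,s})$. By \eqref{div x Yns}--\eqref{div z Yns} the divisors of $x$, $y$ and $z$ are supported inside $O\cup\{P_\infty\}$, so any $\sigma\in K$ preserves each of these divisors and therefore scales the coordinate functions, $\sigma(x)=c_xx$, $\sigma(y)=c_yy$, $\sigma(z)=c_zz$; that is, $\sigma$ is diagonal and hence lies in $C$. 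Imposing that $\sigma$ fix an individual point $P_{(\alpha,\beta,0)}\in O$ with $\alpha,\beta\neq0$ (such a point exists, e.g.\ any $\beta\in\mathbb{F}_{q^2}^\times$ together with $\alpha$ satisfying $\alpha^q+\alpha=\beta^{q+1}$) forces $c_x=c_y=1$, and then the relation $z^{m/s}=y^{q^2}-y$ gives $c_z^{m/s}=1$, so $\sigma\in C_{m/s}$. Thus $K=C_{m/s}$. Now if $\aut(\mathcal{Y}_{n,s})$ acts on $O\cup\{P_\infty\}$, then $C_{m/s}=K$ is the kernel of the permutation representation $\aut(\mathcal{Y}_{n,s})\to\Sym(O\cup\{P_\infty\})$, hence normal.

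For the second part, suppose $C_{m/s}\trianglelefteq\aut(\mathcal{Y}_{n,s})$. Its fixed field is the Hermitian field $L=\mathbb{F}_{q^{2n}}(x,y)$, and since the cover $\mathcal{Y}_{n,s}\to\mathcal{H}_q$ is Galois with group $C_{m/s}$, the kernel of the restriction map $\aut(\mathcal{Y}_{n,s})\to\aut(L)$ is exactly $C_{m/s}$; hence $\aut(\mathcal{Y}_{n,s})/C_{m/s}$ embeds into $\aut(\mathcal{H}_q)=\PGU(3,q)$. This quotient contains $G/C_{m/s}$, which by Proposition \ref{stabY} has order $q^3(q^2-1)$ and is the stabiliser of $\bar{P}_\infty$ in $\PGU(3,q)$. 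Since $\PGU(3,q)$ acts $2$-transitively on the $q^3+1$ points of $\mathcal{H}_q(\mathbb{F}_{q^2})$, a point stabiliser is a maximal subgroup, so $\aut(\mathcal{Y}_{n,s})/C_{m/s}$ is either $G/C_{m/s}$ or all of $\PGU(3,q)$. In the first case $\aut(\mathcal{Y}_{n,s})=G\cdot C_{m/s}=G$ (as $C_{m/s}\subseteq G$); in the second $|\aut(\mathcal{Y}_{n,s})/C_{m/s}|=q^3(q^2-1)(q^3+1)$ and, being a subgroup of $\PGU(3,q)$ of full order, it equals $\PGU(3,q)$ in its natural action on $L$.

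The routine parts are the two orbit/kernel arguments for the equivalence. The main points requiring care are the explicit determination $K=C_{m/s}$ of the pointwise stabiliser (combining the divisor-support argument that forces diagonality with the evaluation at a point of $O$ having nonzero coordinates to kill the $x$- and $y$-scalars), and the invocation of the maximality of the parabolic point-stabiliser of $\PGU(3,q)$, which is what collapses the possibilities for $\aut(\mathcal{Y}_{n,s})/C_{m/s}$ to the clean dichotomy in the statement.
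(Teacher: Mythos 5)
Your proof is correct, and its overall skeleton matches the paper's: the forward implication (normality of $C_{m/s}$ forces invariance of its fixed locus $O\cup\{P_\infty\}$) and the second part (embedding $\aut(\mathcal{Y}_{n,s})/C_{m/s}$ into $\PGU(3,q)$ and using maximality of the stabilizer of $\bar{P}_\infty$) are essentially the arguments in the paper, except that you derive maximality of the point stabilizer from $2$-transitivity, i.e.\ primitivity, where the paper cites \cite[Theorem A.10]{HKT}. Where you genuinely diverge is in the converse implication. The paper takes the pointwise stabilizer $T$ of $O\cup\{P_\infty\}$, shows it is normal (the same kernel-of-the-permutation-action observation you make), then argues abstractly: $T$ is a $p'$-group because the curve has $p$-rank zero (so a $p$-element cannot fix $q^3+1\geq 2$ points, by \cite[Lemma 11.129]{HKT}), hence $T$ is cyclic by Lemma \ref{highnormal}, and finally $C_{m/s}$ is characteristic in the cyclic normal subgroup $T$, hence normal in $\aut(\mathcal{Y}_{n,s})$. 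You instead compute $T$ exactly: since the principal divisors \eqref{div x Yns}--\eqref{div z Yns} are supported in $O\cup\{P_\infty\}$, any element of $T$ acts diagonally on $x,y,z$, and evaluation at a point of $O$ with nonzero coordinates together with the defining equation forces it into $C_{m/s}$, so $T=C_{m/s}$ is itself the (normal) kernel. Your route is more explicit and self-contained: it avoids invoking the $p$-rank-zero fixed-point theorem and the structure of tame stabilizers, and it proves the stronger statement that the pointwise stabilizer equals $C_{m/s}$. It is in fact exactly the technique the paper deploys later for the other family, in Lemma \ref{lem:normX} for $\mathcal{X}_{a,b,n,s}$; the paper's characteristic-subgroup argument, by contrast, generalizes more readily to situations where one cannot (or does not wish to) pin down the pointwise stabilizer explicitly.
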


\begin{proof}
Recall that $O \cup \{P_\infty\}$ consists exactly of the fixed points of $C_{m/s}$ on $\mathcal{Y}_{n,s}$, and $C_{m/s}$ has no other short orbits. 
This immediately implies that if $C_{m/s}$ is normal in $\aut(\mathcal{Y}_{n,s})$ then $\aut(\mathcal{Y}_{n,s})$ acts on $O \cup \{P_\infty\}$: in fact, if $\alpha \in \aut(\mathcal{Y}_{n,s})$, $\beta \in C_{m/s}$ and $P \in O \cup \{P_\infty\}$, then $\alpha(P)=\alpha(\beta(P))=\beta^{\prime}(\alpha(P))$ for some $\beta^{\prime}\in C_{m/s}$ and hence $\alpha(P)$ is fixed by $\beta^{\prime}$, i.e. $\alpha(P)\in O\cup\{P_{\infty}\}$.

Conversely, suppose that $\aut(\mathcal{Y}_{n,s})$ acts on the set $O \cup \{P_\infty\}$ and consider the subgroup
$$T=\{\sigma \in \aut(\mathcal{Y}_{n,s}) \mid \sigma(P)=P, \ \textrm{for \ all} \ P \in O \cup \{P_\infty\}\}$$
of ${\rm Aut}(\cY_{n,s})$.
Let $g \in \aut(\mathcal{Y}_{n,s})$ and $\sigma \in T$. For all $P \in O \cup \{P_\infty\}$ it holds that $g(P) \in O \cup \{P_\infty\}$ and hence $\sigma(g(P))=g(P)$, which implies $g^{-1}\sigma g(P)=P$ for all $P \in O \cup \{P_\infty\}$, that is, $g \sigma g^{-1} \in T$.
Thus, $T$ is a normal subgroup of ${\aut}(\cY_{n,s})$.
Moreover, the characteristic $p$ does not divide the order of $T$, because $|S|>1$ and the curve has $p$-rank zero, see \cite[[Lemma 11.129]{HKT}. Then, by Lemma \ref{highnormal}, $T$ is cyclic. Therefore $C_{m/s} \subseteq T$ is a characteristic subgroup of $T$ and hence a normal subgroup of $\Aut(\mathcal{Y}_{n,s})$. 

To prove the second part of the statement recall that the fixed field of $C_{m/s}$ is the Hermitian function field $\mathbb{F}_{q^{2n}}(x,y)$ and that $\aut(\mathbb{F}_{q^{2n}}(x,y))= {\rm PGU}(3,q)$, see \cite[Proposition 11.30]{HKT}. Thus, $\aut(\mathcal{Y}_{n,s})/C_{m/s}$ is a subgroup of ${\rm PGU}(3,q)$ containing $G/C_{m/s}$, which is a group of order $q^3(q^2-1)$ fixing the point $\bar{P}_\infty$ below $P_\infty$.
Since ${\rm PGU}(3,q)_{\bar{P}_\infty}$ is a maximal subgroup of ${\rm PGU}(3,q)$ of order $q^3(q^2-1)$ (see \cite[Theorem A.10]{HKT}), we get either $\aut(\mathcal{Y}_{n,s})/C_{m/s}=G/C_{m/s}$ (in this case, $\aut(\mathcal{Y}_{n,s})$ fixes $P_\infty$) or $\aut(\mathcal{Y}_{n,s})/C_{m/s}={\rm PGU}(3,q)$.
\end{proof}

Lemma \ref{normaction} is a key ingredient, because it defines the strategy we are going to use to complete the proof of the theorem. First we prove that if $C_{m/s}$ is normal, then $\aut(\mathcal{Y}_{n,s})/C_{m/s} \cong {\rm PGU}(3,q)$ if and only if $3 \mid n$ and $\frac{m}{s}\mid(q^2-q+1)$. Then we prove, independently from $n$, that $\aut(\mathcal{Y}_{n,s})$ acts on $O \cup \{P_\infty\}$. In this way, whenever $3 \nmid n$ or $\frac{m}{s}\nmid(q^2-q+1)$, the claim $\aut(\mathcal{Y}_{n,s})=\aut(\mathcal{Y}_{n,s})_{P_{\infty}}$ will follow immediately from Lemma \ref{normaction}.

\begin{lemma} \label{3divnlift}
Assume that $C_{m/s}$ is normal in $\aut(\mathcal{Y}_{n,s})$. 
Then $\aut(\mathcal{Y}_{n,s})/C_{m/s} \cong {\rm PGU}(3,q)$ if and only if $3 \mid n$ and $\frac{m}{s}\mid (q^2-q+1)$. 
\end{lemma}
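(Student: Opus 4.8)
The plan is to use the dichotomy of Lemma \ref{normaction}: once $C_{m/s}$ is normal, the quotient $\aut(\mathcal{Y}_{n,s})/C_{m/s}$ is either the stabilizer $G/C_{m/s}$ of $\bar{P}_\infty$ or all of ${\rm PGU}(3,q)$, and since ${\rm PGU}(3,q)_{\bar{P}_\infty}$ is maximal, ${\rm PGU}(3,q)$ is realized exactly when \emph{some} $\sigma\in{\rm PGU}(3,q)$ with $\sigma(\bar{P}_\infty)\neq\bar{P}_\infty$ lifts to $\mathcal{Y}_{n,s}$. Write $K=\mathbb{K}(x,y)$ for the Hermitian function field and $u=y^{q^2}-y$, so that $\mathcal{Y}_{n,s}\to\mathcal{H}_q$ is the Kummer cover $z^{m/s}=u$. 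By Kummer theory $\sigma$ lifts if and only if $\sigma(u)\equiv u^{j}\pmod{(K^\ast)^{m/s}}$ for some $j$ coprime to $m/s$. Using \eqref{div z Yns}, both $\divv(\sigma(u))$ and $\divv(u)$ are supported on the $q^3+1$ rational points of $\mathcal{H}_q$ with simple zeros, and comparing the coefficient at any point of $\bar{O}$ fixed by neither forces $j\equiv 1\pmod{m/s}$. Hence the criterion reduces to $\sigma(u)/u\in(K^\ast)^{m/s}$, and a direct computation gives $\divv(\sigma(u)/u)=(q^3+1)(\bar{P}_\infty-\bar{Q})$, where $\bar{Q}=\sigma(\bar{P}_\infty)$; so $\sigma$ lifts if and only if this divisor is $m/s$ times a principal divisor.

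By transitivity of ${\rm PGU}(3,q)$ on rational points I am free to choose $\bar{Q}=(a_0,0)$ with $a_0^q+a_0=0$. For this point $\divv(x-a_0)=(q+1)(\bar{Q}-\bar{P}_\infty)$, so the class $[\bar{Q}-\bar{P}_\infty]$ in ${\rm Pic}^0(\mathcal{H}_q)$ has order dividing $q+1$; that the order is \emph{exactly} $q+1$ follows from the Weierstrass semigroup $\langle q,q+1\rangle$ at $\bar{P}_\infty$, since the only candidate pole divisor $e\bar{P}_\infty$ with $0<e\le q$ is $e=q$, which is excluded because $\divv(y)$ is not supported on a single point. Now $(q^3+1)(\bar{P}_\infty-\bar{Q})$ is $m/s$ times a principal divisor precisely when $\tfrac{m}{s}\mid(q^3+1)$ and $\tfrac{q^3+1}{m/s}[\bar{P}_\infty-\bar{Q}]=0$ in ${\rm Pic}^0(\mathcal{H}_q)$, i.e. $(q+1)\mid\tfrac{q^3+1}{m/s}$; using $q^3+1=(q+1)(q^2-q+1)$ both conditions collapse to the single divisibility $\tfrac{m}{s}\mid(q^2-q+1)$. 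Therefore ${\rm PGU}(3,q)$ lifts if and only if $\tfrac{m}{s}\mid(q^2-q+1)$.

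It remains to see that this divisibility already forces $3\mid n$, which makes the two stated conditions equivalent to the single one obtained. Since $s\neq m$ we have $\tfrac{m}{s}>1$ and $\tfrac{m}{s}\mid m$, so $\tfrac{m}{s}\mid(q^2-q+1)$ yields $\gcd(m,q^2-q+1)>1$; it thus suffices to prove $\gcd(m,q^2-q+1)=1$ whenever $3\nmid n$. For a prime $\ell\mid(q^2-q+1)$ with $\ell\neq 3$ the multiplicative order of $q$ modulo $\ell$ equals $6$, so $\ell\mid(q^n+1)$ would force $6\mid 2n$ and $6\nmid n$, hence $3\mid n$; and for $\ell=3$, which occurs only when $q\equiv 2\pmod 3$, the lifting-the-exponent lemma gives $v_3(m)=v_3(q^n+1)-v_3(q+1)=v_3(n)$, vanishing when $3\nmid n$. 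Thus $3\nmid n$ implies $\gcd(m,q^2-q+1)=1$, completing the equivalence.

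I expect the second paragraph to be the main obstacle: the characterization is sharp only because $[\bar{P}-\bar{P}_\infty]$ has order \emph{exactly} $q+1$ (a weaker order would change the final condition), and one must translate ``$\divv(\sigma(u)/u)$ is $m/s$ times principal'' into the clean divisibility $\tfrac{m}{s}\mid(q^2-q+1)$ by carefully separating the $N$-divisibility of the divisor from the vanishing of the associated torsion class in ${\rm Pic}^0(\mathcal{H}_q)$.
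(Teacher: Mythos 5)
Your proof is correct, but it takes a genuinely different route from the paper's. The paper argues \emph{upstairs} on $\mathcal{Y}_{n,s}$: assuming a lift $\tilde\tau$ of the generator $\tau\colon(x,y)\mapsto(1/x,y/x)$ exists, it computes $(z/\tilde\tau(z))=(q^3+1)(P_{(0,0,0)}-P_\infty)$ and bounds the order of $[P_{(0,0,0)}-P_\infty]$ in the Picard group of $\mathcal{Y}_{n,s}$ via the Weierstrass semigroup $H(P_\infty)=\langle qm/s,(q+1)m/s,q^3\rangle$ together with Lemma \ref{fundeq}, obtaining $\frac{m}{s}\mid(q^2-q+1)$ and $3\mid n$ as two separate consequences; for the converse it constructs $\tilde\tau$ explicitly from a function $\rho_0$ with $(\rho_0)=(q^3+1)(P_\infty-P_{(0,0,0)})$, whose existence comes from viewing $\mathcal{Y}_{n,s}$ as a quotient of the GK curve (hence $\mathbb{F}_{q^6}$-maximal) and invoking the fundamental equation. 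You instead work \emph{downstairs} on $\mathcal{H}_q$: Kummer theory converts lifting of $\sigma$ into $\sigma(u)/u\in(K^\ast)^{m/s}$, i.e.\ into $(q^3+1)(\bar{P}_\infty-\bar{Q})$ being $\frac{m}{s}$ times a principal divisor, and the exact order $q+1$ of $[\bar{Q}-\bar{P}_\infty]$ in ${\rm Pic}^0(\mathcal{H}_q)$ (correctly proved via the semigroup $\langle q,q+1\rangle$ and the shape of $\divv(y-\beta)$) turns this into the single divisibility $\frac{m}{s}\mid(q^2-q+1)$; the condition $3\mid n$ is then recovered by elementary number theory (the order of $q$ modulo a prime divisor of $q^2-q+1$ is $6$ unless that prime is $3$, plus lifting-the-exponent at $3$), using the standing assumption $s\ne m$. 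Your route buys a uniform treatment of both implications (no separate construction of a lift), avoids Lemma \ref{fundeq} and the GK-quotient argument entirely, and isolates a fact the paper leaves implicit: for $s\ne m$, the divisibility $\frac{m}{s}\mid(q^2-q+1)$ already forces $3\mid n$, so the two hypotheses in the statement are not independent. The paper's route, in exchange, stays within data already established for $\mathcal{Y}_{n,s}$ and exhibits the extra automorphism concretely. Two points you should make explicit to be fully rigorous: the Kummer criterion requires $\gcd(m/s,p)=1$ and that the constant field contains the $m/s$-th roots of unity (both hold, since $\mathbb{K}$ is algebraically closed and $m$ is prime to $p$), and your normalization $\bar{Q}=(a_0,0)$ is legitimate because the lifting criterion depends only on $\bar{Q}=\sigma(\bar{P}_\infty)$ and ${\rm PGU}(3,q)$ is transitive on the $\mathbb{F}_{q^2}$-rational points.
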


\begin{proof}
The automorphism group ${\rm PGU}(3,q)$ of the function field $\mathbb{F}_{q^{2n}}(x,y)$ is $2$-transitive on the rational places of $\mathbb{F}_{q^{2n}}(x,y)$, and hence can be generated as ${\rm PGU}(3,q)=\langle {\rm PGU}(3,q)_{Q_\infty},\tau \rangle$, where $\tau(x)=1/x$ and $\tau(y)=y/x$; see \cite[Page 664, item 9]{HKT}. Since $C_{m/s}$ is normal in $\aut(\mathcal{Y}_{n,s})$, we  apply Lemma \ref{normaction}.
From $G/C_{m/s}={\rm PGU}(3,q)_{\bar{P}_\infty}$ we get that $\aut(\mathcal{Y}_{n,s})/C_{m/s} \cong {\rm PGU}(3,q)$ if and only if $\tau$ can be lifted to an automorphism $\tilde\tau$ of $\mathbb{F}_{q^{2n}}(x,y,z)$ having the same action of $\tau$ on the totally ramified points in $O \cup \{P_\infty\}$. 

Suppose that $\aut(\mathcal{Y}_{n,s})/C_{m/s} \cong {\rm PGU}(3,q)$, i.e. such $\tilde\tau$ exists. Then $\tilde\tau(P_\infty)=P_{(0,0,0)}$ and $O \setminus \{P_{(0,0,0)}\}$ is fixed setwise by $\tilde\tau$. From Equations \eqref{div x Yns} and \eqref{div y Yns}, this implies that the divisors of $\tilde\tau(x)$ and $1/x$ coincide, and the same holds for those of $\tilde\tau(y)$ and $y/x$. Therefore $\tilde\tau(x)=\lambda/x$ and $\tilde\tau(y)=\mu y/x$ for some non-zero constants $\lambda,\mu\in\mathbb{F}_{q^{2n}}$. Since the equality $y^{q+1}=x^q+x$ is preserved by $\tilde\tau$, we get that $\lambda=\mu^{q+1}$. Also from Equation \eqref{div z Yns} we have that
$$(\tilde\tau(z))=\left(\sum_{P\in O, \ P \ne P_{(0,0,0)}} P\right) + P_\infty - q^3P_{(0,0,0)}$$
and 
$$(z/\tilde\tau(z))=(q^3+1)(P_{(0,0,0)}-P_\infty).$$
By Equation \eqref{div x}, this implies that the order $o_{P_{(0,0,0)}}$ of $[P_{(0,0,0)}-P_\infty]$ in the Picard group of $\mathcal{Y}_{n,s}$ divides $\gcd((q+1)m/s,q^3+1)=(q+1)\cdot\gcd(m/s,q^2-q+1)\leq (q+1)m/s$. Also, $o_{P_{(0,0,0)}}\in H(P_{\infty})$ and $o_{P_{(0,0,0)}}$ is coprime with $q$.
By \cite[Proposition 5.1]{TTT}, the semigroup $H(P_\infty)$ satisfies $H(P_\infty)=\langle qm/s,(q+1)m/s,q^3 \rangle$; thus, the smallest element of $H(P_\infty)$ coprime with $q$ is $(q+1)m/s$.
Therefore, $o_{P_{(0,0,0)}}=(q+1)m/s=\gcd((q+1)m/s,q^3+1)$, that is, $m/s$ divides $q^2-q+1$.
By Lemma \ref{fundeq}, also $\gcd(q^n+1,q^3+1)$ is an element of $H(P_\infty)$ which is coprime with $q$, and hence not smaller than $(q+1)m/s$. Since $m/s>1$, this implies $\gcd(q^n+1,q^3+1)>q+1$ and hence $3\mid n$.




Conversely, suppose that $m/s$ divides $q^2-q+1$ and $3$ divides $n$. 
Then the $\mathbb{F}_{q^{2n}}$-maximal curve $\mathcal{Y}_{n,s}$ is a quotient $\mathcal{GK}/C_{\frac{q^2-q+1}{m/s}}$ of the $\mathbb{F}_{q^6}$-maximal curve $\mathcal{GK}$; thus, $\mathcal{Y}_{n,s}$ is also $\mathbb{F}_{q^6}$-maximal.
The fundamental equation \cite[Page xix Item (ii)]{HKT} implies that there exists a function $\rho_0$ such that $(\rho_0)=(q^3+1)(P_\infty-P_{(0,0,0)})$.
Our aim is to show that $\tau$ can be lifted to an automorphism $\tilde\tau$ of $\mathcal{Y}_{n,s}$ by defining
$$\tilde\tau:(x,y,z) \mapsto \bigg( \frac{1}{x}, \frac{y}{x}, \xi \cdot \rho_0 \cdot z \bigg),$$
for some suitable constant $\xi$. 

To this aim, note first that the equation $y^{q+1}=x^q+x$ is trivially preserved by $\tilde\tau$, as $\tau$ is an automorphism of $\mathbb{F}_{q^{2n}}(x,y)$ and $\tilde{\tau}$ acts as $\tau$ on $x$ and $y$. Moreover, choosing $\xi$ carefully, we can force also the other equation of $\mathcal{Y}_{n,s}$ to be preserved by $\tilde\tau$. Indeed, we have
$$((\rho_0 z)^{m/s})=\frac{m}{s}(\rho_0 z)=\frac{m}{s}(z)+\frac{m}{s}(q^3+1)(P_\infty-P_{(0,0,0)})$$
$$=\frac{m}{s}\sum_{P\in O}P - \frac{m}{s}q^3 P_{\infty}+\frac{m}{s}(q^3+1)P_\infty-\frac{m}{s}(q^3+1)P_{(0,0,0)}$$
$$=\frac{m}{s}\left(\sum_{P \in O\cup\{P_{\infty}\}, \ P \ne P_{(0,0,0)}}P-q^3P_{(0,0,0)}\right)=(\tau(y^{q^2}-y))=(\tilde\tau(y^{q^2}-y)),$$
and hence there exists a nonzero constant $\eta\in\mathbb{F}_{q^{2n}}$ such that $(\rho_0 z)^{m/s}=\eta(\tilde\tau(y^{q^2}-y))$. Choosing $\xi$ such that $\xi^{m/s}=\eta^{-1}$ we get
$$\tilde\tau(z^{m/s})=\tilde\tau(z)^{m/s}=\xi^{m/s}(\rho_0 z)^{m/s}=\eta^{-1}\cdot \eta(\tilde\tau(y^{q^2}-y))=\tilde\tau(y^{q^2}-y),$$
so that also the second equation of $\mathcal{Y}_{n,s}$ is preserved by $\tilde\tau$, yielding $\tilde\tau \in \aut(\mathcal{Y}_{n,s})$.
Since $\tilde\tau$ acts as $\tau$ on $\mathbb{F}_{q^{2n}}(x,y)$ and $\tau\notin{\rm PGU}(3,q)_{\bar{P}_\infty}=G/C_{m/s}$, we get $\tau\notin G$ and hence ${\aut}(\mathcal{Y}_{n,s})\ne G$.
By Lemma \ref{normaction}, this implies $\aut(\mathcal{Y}_{n,s})/C_{m/s} \cong {\rm PGU}(3,q)$.
\end{proof}

At this point we are left with proving that $C_{m/s}$ is normal in $\aut(\mathcal{Y}_{n,s})$, or equivalently  $\aut(\mathcal{Y}_{n,s})$ acts on $O \cup \{P_\infty\}$; then Lemmas \ref{normaction}, \ref{3divnlift} will complete the proof of Theorem \ref{mainY}.

In some cases this property can be obtained almost for free.

\begin{lemma} \label{CondB}
If $q^3(q^3-1) \leq (q^n+1)(q^2-1)/s-(q^3+1)$ then  
$C_{m/s}$ is normal in $\aut(\mathcal{Y}_{n,s})$.    
\end{lemma}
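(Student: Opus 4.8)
The plan is to prove the stronger statement that, under the hypothesis, $\aut(\mathcal{Y}_{n,s})$ fixes $P_\infty$, so that $\aut(\mathcal{Y}_{n,s})=\aut(\mathcal{Y}_{n,s})_{P_\infty}=G$ by Proposition \ref{stabY}; since $C_{m/s}$ is normal (in fact central) in $G=S_{q^3}\rtimes C$, this immediately gives that $C_{m/s}$ is normal in $\aut(\mathcal{Y}_{n,s})$, which by Lemma \ref{normaction} is equivalent to the action on $O\cup\{P_\infty\}$. Thus it suffices to show that the $\aut(\mathcal{Y}_{n,s})$-orbit of $P_\infty$ is trivial, i.e. that no point $Q\ne P_\infty$ lies in it. First I would unwind the arithmetic content of the hypothesis: using $2g(\mathcal{Y}_{n,s})-2=(q^n+1)(q^2-1)/s-(q^3+1)$, the assumption $q^3(q^3-1)\le 2g-2$ is equivalent to $(q^n+1)(q^2-1)/s\ge q^6+1$, and comparing $q^6+1$ with $(q^3+1)(q^2-1)$ (the difference $q^6-q^5+q^3-q^2+2$ is positive for $q\ge2$) then yields $(q+1)m/s=(q^n+1)/s>q^3+1$.

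Next I would take a hypothetical $Q=h(P_\infty)$ in the orbit with $Q\ne P_\infty$, $h\in\aut(\mathcal{Y}_{n,s})$, and control the order $d$ of the class $[Q-P_\infty]$ in the Picard group from two sides. On one hand, $Q$ is $\mathbb{F}_{q^{2n}}$-rational by Lemma \ref{prankpel}, so Lemma \ref{fundeq} gives $(q^n+1)(Q-P_\infty)\sim 0$, i.e. $d\mid q^n+1$. On the other hand, by Lemma \ref{exact} the canonical class is represented by $(2g-2)P_\infty$; since automorphisms permute canonical divisors, the image $h\big((2g-2)P_\infty\big)=(2g-2)Q$ is again canonical, whence $(2g-2)Q\sim(2g-2)P_\infty$ and $d\mid 2g-2$. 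Because $2g-2\equiv-(q^3+1)\pmod{q^n+1}$, these two divisibilities combine into $d\mid\gcd(2g-2,q^n+1)=\gcd(q^3+1,q^n+1)$, so in particular $d\le q^3+1$. Moreover $d$ is a pole number at $P_\infty$: a function with divisor $d(Q-P_\infty)$ has its only pole at $P_\infty$, of order $d$, so $d\in H(P_\infty)$; and $d$ is coprime to $q$ since it divides $q^3+1$.

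To conclude I would invoke $H(P_\infty)=\langle qm/s,(q+1)m/s,q^3\rangle$ from \cite[Proposition 5.1]{TTT}: among the generators only $(q+1)m/s$ is prime to $q$, and any element of $H(P_\infty)$ prime to $q$ must use that generator and hence is at least $(q+1)m/s$. Therefore $d\ge(q+1)m/s$, which together with $d\le q^3+1$ forces $(q+1)m/s\le q^3+1$, contradicting the inequality $(q+1)m/s>q^3+1$ extracted from the hypothesis. Hence no such $Q$ exists, $\aut(\mathcal{Y}_{n,s})$ fixes $P_\infty$, and the claim follows. The step I expect to be delicate is precisely this two-sided squeeze on the single integer $d$: one must simultaneously prove that $d$ is \emph{small}, by merging the canonical-divisor identity of Lemma \ref{exact} with the fundamental-equation torsion of Lemma \ref{fundeq} into $d\mid\gcd(q^3+1,q^n+1)$, and that $d$ is \emph{large}, by reading it off the Weierstrass semigroup $H(P_\infty)$ as a pole number coprime to $q$; the role of the hypothesis is exactly to render these two bounds incompatible.
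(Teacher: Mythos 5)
Your strategy is genuinely different from the paper's: the paper never touches the Picard group, but instead uses the hypothesis to make the differential $(z-c)^{q^3-1}dz$ regular (its valuation at $P_\infty$ is $2g-2-q^3(q^3-1)\geq 0$ by Lemma \ref{exact}), so that Lemma \ref{gapsdiff} shows $q^3$ is a gap at every point $P\notin O\cup\{P_\infty\}$; since $q^3\in H(P_\infty)$, no such point can lie in the orbit of $P_\infty$, and a short orbit argument then gives that $\aut(\mathcal{Y}_{n,s})$ preserves $O\cup\{P_\infty\}$, which is the desired normality by Lemma \ref{normaction}. Several of your steps are correct and appealing: the rationality of $Q$, the torsion bound $d\mid q^n+1$ from Lemma \ref{fundeq}, the canonical-class argument giving $d\mid 2g-2$, and the lower bound $d\geq(q+1)m/s$ read off from $H(P_\infty)$; in particular these correct pieces already dispose of the case $Q\in O$, since then $d=(q+1)m/s$ while the hypothesis forces $\frac{m}{s}\nmid(q^2-q+1)$ and hence $(q+1)\frac{m}{s}\nmid 2g-2$.

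However, there is a genuine gap at the ``small'' side of your squeeze. The congruence $2g-2\equiv-(q^3+1)\pmod{q^n+1}$ is false in general: since $2g-2=\frac{(q^n+1)(q^2-1)}{s}-(q^3+1)$, it requires $q^n+1$ to divide $\frac{(q^n+1)(q^2-1)}{s}$, i.e. $s\mid q^2-1$, and this fails for most admissible $s$ (one has $\gcd(m,q^2-1)=\gcd(n,q+1)$). Concretely, take $q=2$, $n=9$, $s=9$: the hypothesis of the lemma holds ($56\leq 162$), but $\gcd(2g-2,q^n+1)=\gcd(162,513)=27$, whereas $\gcd(q^3+1,q^n+1)=9$; so your claimed equality $\gcd(2g-2,q^n+1)=\gcd(q^3+1,q^n+1)$, and with it the bound $d\leq q^3+1$, breaks down. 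What survives of your argument is only $d\mid\gcd\bigl(\frac{(q^n+1)(q^2-1)}{s}-(q^3+1),\,q^n+1\bigr)$ together with $d\geq(q+1)\frac{m}{s}$, so to reach a contradiction you would need the inequality $\gcd\bigl(\frac{(q^n+1)(q^2-1)}{s}-(q^3+1),\,q^n+1\bigr)<(q+1)\frac{m}{s}$ under the hypothesis. That inequality happens to hold in the example above ($27<57$), but it is a nontrivial arithmetic statement about divisors of $q^n+1$ that does not follow from anything you have established, and you give no argument for it; as written, the proof is therefore incomplete at exactly the step that was supposed to do the decisive work.
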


\begin{proof}
We start by proving that a point $P\in\cY_{n,s}\setminus(O\cup\{P_{\infty}\})$ cannot have the same Weierstrass semigroup as $P_{\infty}$. As $q^3\in H(P_\infty)$ by \cite[Proposition 5.1]{TTT}, it is enough to show that $q^3\notin H(P)$.
We can write $P=P_{(a,b,c)}$ as $P\ne P_{\infty}$, and $c\ne0$ as $P\notin O$.
Consider the differential $w:=(z-c)^{q^3-1}dz$ on $\cY_{n,s}$.
By Equation \eqref{div z Yns} and Lemma \ref{exact}, the valuation of $w$ at $P_{\infty}$ is $v_{P_\infty}(w)=-q^3(q^3-1)+2g(\cY_{n,s})-2$, and hence $v_{P_{\infty}}(w)\geq0$ by the assumption. Then $w$ is a regular differential, with valuation $q^3-1$ at $P$. Lemma \ref{gapsdiff} implies that $q^3=(q^3-1)+1\notin H(P)$.


Since the Weierstrass semigroup of a point is invariant under automorphisms, we have proved for any $P\in\cY_{n,s}\setminus(O\cup\{P_{\infty}\})$ that $P$ is not in the same orbit as $P_{\infty}$ under $\aut(\cY_{n,s})$. 
We now prove that any $P\in\cY_{n,s}\setminus(O\cup\{P_{\infty}\})$ is not in the same orbit of any point of $O$.
Suppose on the contrary that some point of $O$ is in the orbit $O_P$ of $P\in\cY_{n,s}(O\cup\{P_{\infty}\})$ under $\aut(\cY_{n,s})$. Then $O\subseteq O_P$, because $O$ is an orbit under $G\subseteq\aut(\cY_{n,s})$. Since $P_\infty\notin O_P$, this implies that $P$ is not in the same orbit of any point of $O$ under $\aut(\cY_{n,s})$. Therefore $\{P_\infty\}$ is an orbit under $\aut(\cY_{n,s})$, i.e. $\aut(\cY_{n,s})$ fixes $P_\infty$ and $\aut(\cY_{n,s})=G$ by Proposition \ref{stabY}. Thus $C_{m/s}$ is normal in $\aut(\cY_{n,s})$, and hence $\aut(\cY_{n,s})$ acts on $O\cup\{P_\infty\}$ by Lemma \ref{normaction}, in contradiction with $O\subseteq O_P$ with $P\notin O\cup\{P_\infty\}$.

We have then proved that points of $O\cup\{P_\infty\}$ and points out of $O\cup\{P_\infty\}$ cannot be in the same orbit under $\aut(\cY_{n,s})$, that is, $\aut(\cY_{n,s})$ acts on $O\cup\{P_\infty\}$. By Lemma \ref{normaction}, this is equivalent to $C_{m/s}$ being normal in $\aut(\cY_{n,s})$.
\end{proof}

\begin{remark}
An equivalent proof for Lemma \ref{CondB} can be proposed by showing that the gap sequences at $P\notin O\cup\{P_\infty\}$ and $Q\in O$ are different, using the generalized Weierstrass semigroup $H(Q,P_\infty)$. This semigroup has been computed in \cite{MT}.   
\end{remark}

In the following we can then assume that $q^3(q^3-1) > (q^n+1)(q^2-1)/s-(q^3+1)$; to unify the cases $q=2$ and $q>2$, we will assume $m/s \leq q^3-q^2+2q-1$.

We denote by $O_\infty$ the orbit of $P_\infty$ under $\aut(\cY_{n,s})$.

\begin{lemma} \label{onenontame}
The short orbit $O_\infty$ is the only non-tame orbit of $\aut(\mathcal{Y}_{n,s})$.
\end{lemma}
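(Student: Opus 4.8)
The plan is to exploit that $\cY_{n,s}$, being $\FF_{q^{2n}}$-maximal, has $p$-rank zero, so that every nontrivial $p$-subgroup of $\aut(\cY_{n,s})$ fixes exactly one point of $\cY_{n,s}$ and acts semiregularly elsewhere; this is \cite[Lemma 11.129]{HKT}, already invoked in the proof of Corollary \ref{1actionp2}. The entire argument reduces to tracking these unique fixed points together with Sylow conjugacy, so no delicate ramification computation is needed for this particular lemma.

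First I would check that $O_\infty$ is genuinely a short non-tame orbit. By Proposition \ref{stabY} the stabilizer $\aut(\cY_{n,s})_{P_\infty}=G$ contains the nontrivial $p$-group $S_{q^3}$, so $p\mid|G|$; hence the orbit $O_\infty$ of $P_\infty$ has length $|\aut(\cY_{n,s})|/|G|$ with $|G|>1$, which makes it both proper (short) and non-tame. The core step is then to show that \emph{any} non-tame orbit coincides with $O_\infty$. Let $o$ be a non-tame orbit and pick $P\in o$ with $p\mid|\aut(\cY_{n,s})_P|$, together with a nontrivial $p$-element $\gamma$ fixing $P$. Since $\langle\gamma\rangle$ is a $p$-group acting on a zero $p$-rank curve, $P$ is its unique fixed point. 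Embedding $\gamma$ into a Sylow $p$-subgroup $S$ of $\aut(\cY_{n,s})$, the unique fixed point $Q_S$ of $S$ is in particular fixed by $\gamma$, which forces $Q_S=P$. By Sylow's theorem $S=gS_{q^3}g^{-1}$ for some $g\in\aut(\cY_{n,s})$, and since $S_{q^3}$ fixes only $P_\infty$ one checks directly that every element $s=g t g^{-1}\in S$ (with $t\in S_{q^3}$) satisfies $s(g(P_\infty))=g t(P_\infty)=g(P_\infty)$, so $S$ fixes $g(P_\infty)$ and, by uniqueness, only $g(P_\infty)$. Therefore $P=Q_S=g(P_\infty)\in O_\infty$, giving $o=O_\infty$.

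I do not anticipate a serious obstacle: once the unique-fixed-point property for $p$-groups on zero $p$-rank curves is in place, the conclusion is a formal consequence of the conjugacy of Sylow $p$-subgroups. The only point to phrase carefully is that every nontrivial $p$-element witnessing non-tameness lies in some Sylow subgroup whose unique fixed point is an $\aut(\cY_{n,s})$-translate of $P_\infty$, so that all such fixed points fall into the single orbit $O_\infty$.
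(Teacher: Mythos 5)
Your proposal follows essentially the same route as the paper's proof: both rest on the unique-fixed-point property of $p$-subgroups on a curve of $p$-rank zero (Corollary \ref{1actionp2}, i.e.\ \cite[Lemma 11.129]{HKT}) combined with conjugacy of Sylow $p$-subgroups. However, one step is asserted without justification: the claim that ``by Sylow's theorem $S=gS_{q^3}g^{-1}$'' presupposes that $S_{q^3}$ is a \emph{full} Sylow $p$-subgroup of $\aut(\cY_{n,s})$, which is not automatic --- Sylow's theorem only conjugates Sylow subgroups into one another, and a priori $S_{q^3}$ could be properly contained in one, in which case no conjugate of $S_{q^3}$ would equal $S$. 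The paper closes exactly this point before running the conjugacy argument: if $S$ is a Sylow $p$-subgroup of $\aut(\cY_{n,s})$ containing $S_{q^3}$, then the unique fixed point of $S$ is fixed by $S_{q^3}$ and hence equals $P_\infty$, so $S\subseteq\aut(\cY_{n,s})_{P_\infty}=G$ by Proposition \ref{stabY}, forcing $S=S_{q^3}$ since $S_{q^3}$ is the Sylow $p$-subgroup of $G$. Alternatively, your argument can be repaired without establishing this fact: Sylow's theorem does give $gS_{q^3}g^{-1}\subseteq S$ for some $g\in\aut(\cY_{n,s})$, the unique fixed point of the subgroup $gS_{q^3}g^{-1}$ is $g(P_\infty)$, and since the unique fixed point $Q_S$ of $S$ is in particular fixed by this subgroup, you still conclude $P=Q_S=g(P_\infty)\in O_\infty$. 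With either repair your proof is complete and coincides in substance with the paper's.
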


\begin{proof}
We know already that $O_\infty$ is a non-tame short orbit of $\aut(\mathcal{Y}_{n,s})$, because $G$ fixes $P_\infty$ and has order divisible by $p$. 
If $S$ is a Sylow $p$-subgroup of $\aut(\cY_{n,s})$ containing the Sylow $p$-subgroup $S_{q^3}$ of $G$, then $S=S_{q^3}$, because $S$ fixes $P_{\infty}$ by Corollary \ref{1actionp2} and $G$ is the full stabilizer of $P_\infty$ in $\aut(\cY_{n,s})$.

Let $O^\prime$ be a non-tame short orbit of $\aut(\cY_{n,s)}$, and $P^\prime$. By Lemma \ref{highnormal} we can write the stabilizer of $P^\prime$ as $\aut(\cY_{n,s})_{P^\prime}=S^\prime\rtimes C^\prime$, where $S^\prime$ is the Sylow $p$-subgroup of $\aut(\cY_{n,s})_{P^\prime}$. Arguing as above, $S^\prime$ is a Sylow $p$-subgroup of $\aut(\cY_{n,s})$.
Then the Sylow $p$-subgroups $S_{q^3}$, $S^\prime$ are conjugate, say $\alpha^{-1}S^\prime\alpha=S_{q^3}$ with $\alpha\in\aut(\cY_{n,s})$.
This implies $\alpha(P_\infty)=P^\prime$, and hence their orbits coincide, i.e. $O^\prime=O_\infty$. The claim is proved.
\end{proof}

\begin{remark}
By the properties of $p$-groups and Sylow $p$-subgroups, it can be noted that Lemma \ref{onenontame} holds also for other curves: if $\mathcal{X}$ is a curve in characteristic $p$ such that $p$ divides $|{\rm Aut}(\mathcal{X})|$ and every $p$-element of ${\rm Aut}(\mathcal{X})$ has exactly one fixed point, then ${\rm Aut}(\mathcal{X})$ has exactly one non-tame orbit.
\end{remark}

\begin{remark}\label{rem:short}
Since $G$ is the full stabilizer of $P_\infty$ in $\aut(\cY_{n,s})$ by Proposition \ref{stabY}, we get from the orbit-stabilizer theorem that
$$|\aut(\mathcal{Y}_{n,s})| = |O_\infty| \cdot |G|.$$ 
If $O_\infty=\{P_\infty\}$ then $\aut(\mathcal{Y}_{n,s})=G$, and hence $\aut(\mathcal{Y}_{n,s})$ acts on $O \cup \{P_\infty\}$, which proves Theorem \ref{mainY}.
We can then assume in the rest of this section that $|O_\infty|>1$.

Since $\{P_{\infty}\}$ and $O$ are the only short orbits of $G$, we get that either $O_\infty=\{P_\infty\} \cup O$ or $O_\infty$ contains at least one long orbit of $G$.
In the first case we get immediately that $\aut(\mathcal{Y}_{n,s})$ acts on $O \cup \{P_\infty\}$, whence Theorem \ref{mainY} follows. In the latter case we have that  
$$|\aut(\mathcal{Y}_{n,s})| = |O_\infty| \cdot |G| > |G|^2 >84(g(\mathcal{Y}_{n,s})-1),$$
and hence we can apply Theorem \ref{Hurwitz} to investigate the short orbits of $\aut(\cY_{n,s})$. Since there are automorphisms fixing points in $O \cup \{P_\infty\}$, we know that $O\cup\{P_\infty\}$ is contained in the union of short orbits of $\aut(\cY_{n,s})$.
Since $G$ fixes $P_\infty$ and has order divisible by $p$, we also know that $P_\infty$ is contained in a non-tame short orbit of $\aut(\mathcal{Y}_{n,s})$.
\end{remark}

By Remark \ref{rem:short}, the cases $O_\infty=\{P_\infty\}$ and $O_\infty=\{P_\infty\}\cup O$ have already been worked out. Therefore we can assume from now on that $O_\infty$ contains $k \geq 1$ long orbits of $G$.

This implies $|\aut(\cY_{n,s})|>84(g-1)$, so that Theorem \ref{Hurwitz} applies for the short orbits of $\aut(\mathcal{Y}_{n,s})$. Note that if $q \geq 7$, then already $|G| >84(g-1)$, and Theorem \ref{Hurwitz} applies for $\aut(\cY_{n,s})$ without any assumption on $O_\infty$.

\begin{lemma} \label{numborbits}
$\aut(\mathcal{Y}_{n,s})$ has exactly two short orbits: the non-tame orbit $O_\infty$, and a tame short orbit $O_1$.
\end{lemma}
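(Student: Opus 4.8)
The plan is to apply Theorem \ref{Hurwitz} to $\aut(\cY_{n,s})$. By Remark \ref{rem:short} we have reduced to the case where $O_\infty$ contains at least one long $G$-orbit, so $|\aut(\cY_{n,s})|>84(g(\cY_{n,s})-1)$ and the fixed field $\K(\cY_{n,s})^{\aut(\cY_{n,s})}$ is rational; moreover $\aut(\cY_{n,s})$ has at most three short orbits, falling into one of the four configurations of Theorem \ref{Hurwitz}. By Lemma \ref{onenontame}, $O_\infty$ is the \emph{unique} non-tame orbit, which immediately rules out configuration (2) (two non-tame orbits). It then remains to exclude the single-orbit configuration (3) and the three-orbit configuration (1), leaving configuration (4): one non-tame orbit $O_\infty$ and one tame short orbit $O_1$. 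Throughout I use Proposition \ref{stabY}, which gives $\aut(\cY_{n,s})_{P_\infty}=G$ and hence, by orbit-stabilizer, $|\aut(\cY_{n,s})|=|O_\infty|\,|G|$.

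To rule out configuration (3) I would use a length bound. Here $O_\infty$ would be the only short orbit, so by Theorem \ref{Hurwitz} its length would divide $2g(\cY_{n,s})-2$; on the other hand, since $O_\infty$ contains $P_\infty$ together with at least one long $G$-orbit, $|O_\infty|\geq|G|+1$. A direct comparison shows that $|G|=q^3(q-1)(q^n+1)/s$ already exceeds $2g(\cY_{n,s})-2=\frac{(q^2-1)(q^n+1)}{s}-(q^3+1)$, since
$$|G|-\big(2g(\cY_{n,s})-2\big)=\frac{(q-1)(q^3-q-1)(q^n+1)}{s}+(q^3+1)>0$$
for every $q\geq2$. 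Thus $|O_\infty|>2g(\cY_{n,s})-2$, contradicting $|O_\infty|\mid 2g(\cY_{n,s})-2$, and configuration (3) is impossible.

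The heart of the argument is excluding configuration (1), for which I would feed the precise ramification data into the Hurwitz genus formula \eqref{eq1}. Since $O_\infty$ is a single orbit with $\aut(\cY_{n,s})_{P_\infty}=G$, every point of $O_\infty$ has conjugate stabilizer and hence contributes the same different, equal to $d_{P_\infty}$. Adding up the \emph{local} contributions at $P_\infty$ — namely $(q^n+1)/s+1$ from each element of $E_q\setminus\{id\}$ and $m/s+1$ from each element of $S_{q^3}\setminus E_q$ (Lemmas \ref{contr1} and \ref{contr2}, valid since these are $p$-elements fixing only $P_\infty$), together with the contribution $1$ from each of the $|G|-q^3$ tame elements fixing $P_\infty$ — a routine simplification gives
$$d_{P_\infty}-|G|=\frac{(q^2-1)(q^n+1)}{s}-1.$$
In configuration (1) the quotient has $\bar g=0$ and the two tame orbits each have length $|\aut(\cY_{n,s})|/2$, so they contribute $|\aut(\cY_{n,s})|$ in total; writing $|\aut(\cY_{n,s})|=|O_\infty|\,|G|$ and substituting into \eqref{eq1}, the formula collapses to
$$2g(\cY_{n,s})-2=|O_\infty|\left(\frac{(q^2-1)(q^n+1)}{s}-1\right).$$
As $|O_\infty|\geq2$ in the present regime, the right-hand side is at least $2\big(\frac{(q^2-1)(q^n+1)}{s}-1\big)$, which strictly exceeds the true value $\frac{(q^2-1)(q^n+1)}{s}-(q^3+1)$; this contradiction excludes configuration (1).

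Having eliminated configurations (1), (2) and (3), only (4) survives, yielding exactly two short orbits: the non-tame orbit $O_\infty$ and a single tame short orbit $O_1$. I expect the main obstacle to be the bookkeeping in the third paragraph: one must carefully isolate the \emph{local} different $d_{P_\infty}$ (as opposed to the global contribution $i(\alpha)$ of a tame element, which also records ramification at the points of $O$) and verify the cancellation $d_{P_\infty}-|G|=(q^2-1)(q^n+1)/s-1$. Once this identity is in hand, the exclusion of both remaining configurations reduces to the two elementary numerical comparisons above.
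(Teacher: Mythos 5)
Your proof is correct, and it follows the paper's skeleton: reduce via Remark \ref{rem:short} to the regime where Theorem \ref{Hurwitz} applies, eliminate configuration (2) with Lemma \ref{onenontame}, eliminate (3) with the length bound $|O_\infty|\geq 1+|G|>2g-2$, and eliminate (1) with the Hurwitz genus formula fed by Lemmas \ref{contr1} and \ref{contr2}. The one genuine difference is in how (1) is excluded. The paper first proves $O\subseteq O_\infty$ (if $O$ sat inside one of the two tame orbits, that orbit's length would be congruent to $q^3$ modulo $|G|$ while the other's would be divisible by $|G|$, contradicting that both have length $|\aut(\cY_{n,s})|/2$), then writes $|O_\infty|=q^3+1+k|G|$ and derives the contradiction from $k\geq1$. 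You skip the localization of $O$ entirely: since every point of $O_\infty$ has local different $d_{P_\infty}$ and the two tame orbits contribute exactly $|\aut(\cY_{n,s})|$, the formula collapses to $2g(\cY_{n,s})-2=|O_\infty|\bigl(d_{P_\infty}-|G|\bigr)$, and your identity $d_{P_\infty}-|G|=\frac{(q^2-1)(q^n+1)}{s}-1$ checks out (indeed $d_{P_\infty}=|G|-1+(q^3+q^2-q-1)\frac{m}{s}$ and $(q+1)(q^2-1)m=(q^2-1)(q^n+1)$), so the right-hand side already exceeds $2g(\cY_{n,s})-2=\frac{(q^2-1)(q^n+1)}{s}-(q^3+1)$ for \emph{any} $|O_\infty|\geq1$ — you do not even need $|O_\infty|\geq2$. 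Your element-wise bookkeeping at $P_\infty$ is also sound: the $|G|-q^3$ elements outside $S_{q^3}$ lie in no ramification group beyond $G^{(0)}_{P_\infty}$, so they contribute exactly $1$ each, and the $p$-elements contribute their full $i(\alpha)$ since they fix only $P_\infty$. In short, your route is slightly leaner and shows the hypothesis $k\geq1$ is irrelevant to case (1); the paper's version costs an extra congruence argument but makes explicit where $O$ sits, which is the pattern it then reuses in Proposition \ref{normalizzano}, where the position of $O$ genuinely matters.
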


\begin{proof}
By Theorem \ref{Hurwitz}, $\aut(\mathcal{Y}_{n,s})$ has at most three short orbits, in particular:
\begin{enumerate}
\item exactly three short orbits, two tame of length $|\aut(\mathcal{Y}_{n,s})|/2$ and one non-tame, with $p \geq 3$; or
\item exactly two short orbits, both non-tame; or 
\item only one short orbit, which is non-tame, oflength dividing $2g-2$; or
\item  exactly two short orbits, one tame and one non-tame.
\end{enumerate}
We start by observing that Case (2) cannot occur by Lemma \ref{onenontame}. If Case (3) occurs then $|O_\infty|$ must divide $2g-2$, which is impossible since $|O_\infty| \geq 1+|G|>2g-2$.
Thus, the proof is complete once we show that Case (1) cannot occur. 

Suppose that Case (1) occurs, and let $O_1,O_2$ be the tame orbits of $\aut(\cY_{n,s})$ of the same length $|\aut(\cY_{n,s})|/2$. Recall that $P_\infty$ is in the non-tame orbit $O_\infty$ of $\aut(\cY_{n,s})$, $O$ is an orbit of $G$, and $G$ acts semiregularly out of $O\cup\{P_\infty\}$. If $O\subseteq O_1$, this implies that the length of $O_1$ is congruent to $|O|$ modulo $|G|$ while the length of $O_2$ is divisible by $|G|$, a contradiction to $|O_1|=|O_2|$; therefore $O\not\subseteq O_1$ and analogously $O\not\subseteq O_2$. Thus, $O\subseteq O_\infty$.
Write $|O_\infty|=1+|O|+k|G|=q^3+1+k|G|$ for some $k \geq 1$. The Hurwitz genus formula applied to $\aut(\mathcal{Y}_{n,s})$, together with Lemmas \ref{contr1} and \ref{contr2}, gives
$$\frac{(q^n+1)(q^2-1)}{s}-(q^3+1)=$$
$$-2|\aut(\mathcal{Y}_{n,s})|+2\frac{|\aut(\mathcal{Y}_{n,s})|}{2}(2-1)+|O_\infty|\bigg(|G|-1+(q^3-1)\frac{m}{s}+(q-1)q\frac{m}{s}\bigg).$$
By the orbit-stablizer theorem $|O_\infty||G|=|\aut(\cY_{n,s})|$, whence
$$
\frac{(q^n+1)(q^2-1)}{s}-(q^3+1)=(q^3+1+k|G|)\bigg(|G|-1+(q^3-1)\frac{m}{s}+(q-1)q\frac{m}{s}\bigg).
$$
Using $|G|=q^3(q^2-1)m/s$, this yields a contradiction to $k \geq 1$.
\end{proof}

We are now in the position of proving that the assumption $k \geq 1$ yields a contradiction, which in turn gives that $\aut(\mathcal{Y}_{n,s})$ acts on $O \cup \{P_\infty\}$ as shown above.

\begin{proposition} \label{normalizzano}
$\aut(\mathcal{Y}_{n,s})$ acts on $O \cup \{P_\infty\}$.
\end{proposition}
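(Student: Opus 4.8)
The plan is to assume, for contradiction, that $O_\infty$ contains $k\geq 1$ long orbits of $G$ and to reach a numerical impossibility via the Hurwitz genus formula for the whole group. By Lemma~\ref{numborbits} we then know $\aut(\cY_{n,s})$ has exactly two short orbits: the non-tame orbit $O_\infty$ and a tame orbit $O_1$; moreover $|\aut(\cY_{n,s})|>84(g-1)$ forces the quotient $\cY_{n,s}/\aut(\cY_{n,s})$ to be rational by Theorem~\ref{Hurwitz}. Since $G$ is the full stabilizer of $P_\infty$ by Proposition~\ref{stabY}, every point of $O_\infty$ has stabilizer conjugate to $G$, hence the same different $d_{P_\infty}$, while a point of the tame orbit $O_1$ has cyclic stabilizer of some order $h\geq 2$ contributing $h-1$ to the different.

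First I would record the local different at $P_\infty$. Writing $B:=(q^n+1)(q^2-1)/s-1=(2g-2)+q^3$ and reading the ramification filtration of $G$ off Lemmas~\ref{contr1} and~\ref{contr2} (so that $G^{(i)}=S_{q^3}$ for $1\leq i\leq m/s$ and $G^{(i)}=E_q$ for $m/s<i\leq (q^n+1)/s$), a direct summation gives $d_{P_\infty}=|G|+B$. The arithmetic identity I will use repeatedly is $(q+1)|G|=q^3(B+1)$, which follows at once from $|G|=q^3(q^2-1)m/s$ and $(q+1)m=q^n+1$.

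Next, setting $N:=|\aut(\cY_{n,s})|$, $|O_\infty|=N/|G|$ and $|O_1|=N/h$, the Hurwitz genus formula for the rational quotient collapses to
\[
2g-2=N\left(\frac{B}{|G|}-\frac1h\right),
\]
whence $|O_\infty|=\dfrac{(2g-2)h}{Bh-|G|}$ and, in particular, $h>|G|/B$. The assumption $k\geq 1$ means $|O_\infty|\geq 1+|G|$, and substituting this into the displayed expression bounds $h$ from above by $U:=\dfrac{|G|(1+|G|)}{|G|B+q^3}$ (after using $(1+|G|)B-(2g-2)=|G|B+q^3$). Thus a valid configuration would require an integer $h$ with $|G|/B<h\leq U$.

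The final and most delicate step is to show this interval is integer-free, and this is where I expect the main obstacle to be. Using $B>q^3$ (which holds since $m/s\geq 2$ gives $B+1=(m/s)(q+1)(q^2-1)\geq 2(q+1)(q^2-1)>q^3$) together with the identity $(q+1)|G|=q^3(B+1)$, I would check that $|G|/B=(q^2-q+1)-\frac{B-q^3}{(q+1)B}$ lies strictly inside the open interval $(q^2-q,\,q^2-q+1)$ and that the admissible interval has length $U-|G|/B=\frac{|G|(B-q^3)}{B(|G|B+q^3)}<1/B<1$. Consequently the only integer that could lie in $(|G|/B,U]$ is $q^2-q+1$, and the inequality $U\geq q^2-q+1$ reduces, after clearing denominators with the same identity, to $q^3\geq|G|$, which is absurd. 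This contradiction forces $k=0$, so $O_\infty\subseteq\{P_\infty\}\cup O$; combined with the cases already disposed of in Remark~\ref{rem:short}, it follows that $\aut(\cY_{n,s})$ acts on $O\cup\{P_\infty\}$. Everything hinges on the identity $(q+1)|G|=q^3(B+1)$ and on $B>q^3$ to squeeze the admissible $h$ into an interval too short to contain an integer.
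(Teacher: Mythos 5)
Your proof is correct, and it reaches the contradiction by a genuinely different and more economical route than the paper. The paper's proof of Proposition \ref{normalizzano} splits according to whether the $G$-orbit $O$ lies in the tame orbit $O_1$ ($\ell=0$) or in $O_\infty$ ($\ell=1$): in the first case it uses that stabilizers of points of $O_1$ contain a conjugate of $C$, derives the congruence $h\equiv1\pmod{(q^2-1)m/s}$, and disposes of the sub-cases by invoking the bound $|\aut(\cY_{n,s})|<8g^3$ from Theorem \ref{Hurwitz} and the bound $4g+4$ on cyclic tame stabilizers from \cite[Theorem 11.79]{HKT}; in the second case it exhibits a closed-form expression for $|\aut(\cY_{n,s})_{P_1}|$ that fails to be an integer. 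You instead ignore the location of $O$ altogether, retaining from $k\geq1$ only the inequality $|O_\infty|\geq 1+|G|$, and run a single Hurwitz computation parametrized by the order $h$ of the cyclic tame stabilizer. I checked your ingredients against the paper: the filtration read off Lemmas \ref{contr1} and \ref{contr2} gives $d_{P_\infty}=|G|+B$ with $B=(q^n+1)(q^2-1)/s-1=(2g-2)+q^3$, the identity $(q+1)|G|=q^3(B+1)$ holds, the collapsed formula $2g-2=N\left(\frac{B}{|G|}-\frac{1}{h}\right)$ is exact, and the resulting window $|G|/B<h\leq U$ is correct; moreover $|G|/B=(q^2-q+1)-\frac{B-q^3}{(q+1)B}$ lies strictly in $(q^2-q,\,q^2-q+1)$, and $U\geq q^2-q+1$ is equivalent (given $B>q^3$, which follows from $m/s\geq2$) to $(B-q^3)(q^3-|G|)\geq0$, i.e. to $q^3\geq|G|$, which is false. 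So no integer $h$ exists, exactly as you claim. Your approach buys uniformity and self-containedness---no case distinction on $\ell$, no appeal to the $8g^3$ bound or to \cite[Theorem 11.79]{HKT}, only the Hurwitz genus formula and the orbit-stabilizer theorem---while the paper's case analysis extracts finer structural data (the congruence on $h$, the count $k_1$ of long orbits in $O_1$) and runs structurally parallel to the later treatment of $\cX_{a,b,n,s}$, where the same dichotomy reappears.
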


\begin{proof}
Suppose by contradiction that $\aut(\mathcal{Y}_{n,s})$ does not act on $O \cup \{P_\infty\}$.
By Lemma \ref{numborbits}, $\aut(\cY_{n,s})$ has exactly two short orbits: the non-tame orbit $O_\infty$ of length $|O_\infty|=1+\ell|O|+k|G|$ where $k \geq 1$ and $\ell \in \{0,1\}$, and the tame orbit $O_1$ of length $|O_1|=(1-\ell)|O|+k_1|G|$ where $k_1 \geq 0$.
The Hurwtiz genus formula 
applied to $\aut(\mathcal{Y}_{n,s})$ gives
$$\frac{(q^n+1)(q^2-1)}{s}-(q^3+1)=-2|\aut(\mathcal{Y}_{n,s})|+|O_1|\bigg( \frac{|\aut(\mathcal{Y}_{n,s})|}{|O_1|}-1\bigg)$$
$$+(\ell q^3+1+k|G|)\bigg(|G|-1+(q^3-1)\frac{m}{s}+(q-1)q\frac{m}{s}\bigg).$$
 We analyze the cases $\ell=1$ and $\ell=0$ separately.
\begin{itemize}
    \item \textbf{The case $\ell=0$.} In this case $O \subseteq O_1$. Then the stabilizer of any point in $O_1$ is conjugate to the stabilizer of a point in $O$, which contains $C$ and hence has order divisible by $(q^2-1)m/s$.
    Thus, there exists some $h\geq1$ such that $|\aut(\mathcal{Y}_{n,s})_{P_1}|=h(q^2-1)m/s$ for any $P_1\in O_1$.
    By the orbit-stabilizer theorem,
    $$(1+k|G|)|G|=|\aut(\mathcal{Y}_{n,s})|=(q^3+k_1|G|)h(q^2-1)m/s,$$
    and hence
    $$1+kq^3(q^2-1)m/s=(1+k_1(q^2-1)m/s)h.$$
    In particular, $h$ is congruent to $1$ modulo $(q^2-1)m/s$. 
 
 If $h=1$, then $|\aut(\mathcal{Y}_{n,s})_{P_1}|=(q^2-1)m/s$ and $k_1=kq^3$. The Hurwitz genus formula now gives
    $$\frac{(q^n+1)(q^2-1)}{s}-(q^3+1)=-(q^3+kq^3|G|)+(1+k|G|)\bigg(-1+(q^3-1)\frac{m}{s}+(q-1)q\frac{m}{s}\bigg),$$
    and so
  $$0=-kq^3|G|+k|G|\bigg(-1+(q^3-1)\frac{m}{s}+(q-1)q\frac{m}{s}\bigg)=k|G|\bigg(-q^3-1+(q^3+q^2-q-1)\frac{m}{s} \bigg)>0,$$  
  a contradiction.

Hence $h=t(q^2-1)m/s+1$ with $t \geq 1$.
This implies $kq^3=(1+k_1(q^2-1)m/s)t+k_1$. 
The orbit-stabilizer theorem now gives
$$|\aut(\mathcal{Y}_{n,s})|=(q^3+k_1|G|)h(q^2-1)m/s > (q^3+k_1|G|)\left((q^2-1)m/s\right)^2.$$
If $k_1> 0$ then 
$$|\aut(\mathcal{Y}_{n,s})| > (q^3+|G|)\left((q^2-1)m/s\right)^2>8g^3,$$
and a contradiction is obtained from Theorem \ref{Hurwitz}.
If $k_1=0$ then $t=kq^3$ and 
\begin{equation}\label{eq:fissP1}|\aut(\mathcal{Y}_{n,s})_{P_1}|=\left(kq^3(q^2-1)m/s+1\right)(q^2-1)m/s.\end{equation}
Since the orbit of $P_1$is tame, the stabilizer $\aut(\mathcal{Y}_{n,s})_{P_1}$ is cyclic of order prime-to-$p$. Then, by \cite[Theorem 11.79]{HKT}, $\aut(\mathcal{Y}_{n,s})_{P_1}$ has order at most $4g+4=2(q^2-1)(q+1)m/s-2q^3+6$, which is a contradiction to Equation \eqref{eq:fissP1}.

    \item \textbf{The case $\ell=1$.} In this case $O\subseteq O_\infty$. From the orbit-stabilizer theorem we have
     $$(1+q^3+k|G|)|G|=|\aut(\mathcal{Y}_{n,s})|=k_1|G||\aut(\mathcal{Y}_{n,s})_{P_1}|,$$
     and hence
\begin{equation}\label{eq:uan}|\aut(\mathcal{Y}_{n,s})_{P_1}|=\left(1+q^3+k|G|\right)/k_1.\end{equation}
     On the other hand, the Hurwitz genus formula gives
$$0=-k_1|G|-k|G|+(q^3+k|G|)(q^2-1)(q+1)m/s,$$
that is
\begin{equation}\label{eq:ciu}k_1=q+1+k\left((q^2-1)(q+1)m/s-1\right).\end{equation}
Using Equations \eqref{eq:uan} and \eqref{eq:ciu} together we get
$$|\aut(\mathcal{Y}_{n,s})_{P_1}|=\frac{1+q^3+k\frac{q^3(q^2-1)m}{s}}{q+1+k\bigg(\frac{m(q^2-1)(q+1)}{s}-1\bigg)}$$
$$=q^2-q+1+\frac{k\bigg(-\frac{m(q^2-1)}{s}+q^2-q+1\bigg)}{k\bigg(\frac{m(q^2-1)(q+1)}{s}-1\bigg)+q+1},$$
which is not an integer because $k\geq1$. This contradiction completes the proof.
\end{itemize}
\end{proof}

The proof of Theorem \ref{mainY} is now complete. Indeed, by Proposition \ref{normalizzano} $\aut(\mathcal{Y}_{n,s})$ acts on $O \cup \{P_\infty\}$. From Lemma \ref{normaction} either $\aut(\mathcal{Y}_{n,s})=G$ or $\aut(\mathcal{Y}_{n,s})/C_{m/s}$ is isomorphic to ${\rm PGU}(3,q)$. From Lemma \ref{3divnlift} the latter case happens if and only if $3$ divides $n$ and $m/s$ divides $q^2-q+1$.

\subsection{The automorphism group of $\mathcal{X}_{a,b,n,s}$}
Consider the curve
$$
\mathcal{Y}_{n,s}:\begin{cases}
W^{m/s}=V^{q^2}-V \\
V^{q+1}=U^q+U\\
\end{cases}
$$
with coordinate functions $u,v,w$, and the automorphism group
$$
E_{\bar{q}}=\left\{(u,v,w)\mapsto\left(u+\frac{\lambda}{c},v,w\right)\colon\lambda\in\mathbb{F}_{\bar{q}}\right\}\subset{\rm Aut}(\cY_{n,s}),
$$
which is elementary abelian of order $\bar{q}$ and is contained in $S_{q^3}$. The aim of this section is to prove the following theorem.
\begin{theorem} \label{mainX}
Assume that $b<a$ or $q^2\nmid(\frac{m}{s}-1)$.
Then the automorphism group of $\cX_{a,b,n,s}$ has order $\frac{q^3}{\bar{q}}(q+1)(\bar{q}-1)\frac{m}{s}$ and is isomorphic to $(S_{q^3}/E_{\bar{q}})\rtimes C_{(q+1)(\bar{q}-1)m/s}$. 
\end{theorem}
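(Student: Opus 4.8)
The plan is to realize $\cX_{a,b,n,s}$ as the quotient $\cY_{n,s}/E_{\bar{q}}$ and to transport the analysis of $\cY_{n,s}$ across this covering. First I would verify that, choosing coordinate functions with $y=v$, $z=w$ and $x$ an $E_{\bar{q}}$-invariant generator of the degree-$\bar{q}$ subextension, the defining equations of $\cX_{a,b,n,s}$ are precisely those cut out on the fixed field $\mathbb{F}_{q^{2n}}(u,v,w)^{E_{\bar{q}}}$; the relation $cy^{q+1}=\mathrm{Tr}_{q/\bar{q}}(x)$ descends from $v^{q+1}=u^q+u$ because $u^q+u$ is $E_{\bar{q}}$-invariant (as $(\lambda/c)^q+\lambda/c=0$ for $\lambda\in\mathbb{F}_{\bar{q}}$, the defining property of $c$). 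Since $E_{\bar{q}}$ is contained in $E_q$, the center of $S_{q^3}$, the group $S_{q^3}$ centralizes $E_{\bar{q}}$, so $\bar{S}:=S_{q^3}/E_{\bar{q}}$ (of order $q^3/\bar{q}$) acts on $\cX_{a,b,n,s}$, and $P_\infty$, the set $O$ and the semigroup data all descend to the quotient.

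For the lower bound I would exhibit the claimed group as a descent. A direct conjugation computation gives $\tau^{k}\epsilon_\lambda\tau^{-k}=\epsilon_{(a^{q+1})^k\lambda}$, where $\epsilon_\lambda\in E_{\bar{q}}$ is the map $u\mapsto u+\lambda/c$ and $\tau$ is the generator of the cyclic part of $\aut(\cY_{n,s})$; hence $\tau^k$ normalizes $E_{\bar{q}}$ exactly when $(a^{q+1})^k\in\mathbb{F}_{\bar{q}}^\times$, i.e. when $k_0:=(q-1)/(\bar{q}-1)$ divides $k$. Thus $S_{q^3}\rtimes\langle\tau^{k_0}\rangle$ normalizes $E_{\bar{q}}$ and descends to an automorphism group $\bar{G}=\bar{S}\rtimes\bar{C}$ of $\cX_{a,b,n,s}$, where $\bar{C}:=\langle\tau^{k_0}\rangle$ has order $(q+1)(\bar{q}-1)m/s$; so $\bar{G}$ has exactly the stated order and structure.

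The upper bound is the real work. Mirroring Lemmas \ref{contr1}, \ref{contr2}, \ref{normpsyl}, I would compute the contributions $i(\cdot)$ of the $p$-elements on $\cX_{a,b,n,s}$ --- most economically by descending those of $\cY_{n,s}$ through Riemann--Hurwitz for $\cY_{n,s}\to\cX_{a,b,n,s}$ --- and insert them into the Hurwitz genus formula for $\cX_{a,b,n,s}\to\cX_{a,b,n,s}/\tilde{S}$ to force the Sylow $p$-subgroup $\tilde{S}$ of $\aut(\cX_{a,b,n,s})_{P_\infty}$ to equal $\bar{S}$; this simultaneously identifies $\bar{E}_q:=E_q/E_{\bar{q}}$ as a higher ramification group, hence a normal subgroup of $\aut(\cX_{a,b,n,s})_{P_\infty}$. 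For the prime-to-$p$ part $\tilde{C}$ I would argue as in Lemma \ref{compfix}: passing to $\mathcal{H}^\ast:=\cX_{a,b,n,s}/C_{m/s}$, the curve $cy^{q+1}=\mathrm{Tr}_{q/\bar{q}}(x)$, a semisimple automorphism fixing the image of $P_\infty$ scales $x\mapsto\mu x$ and $y\mapsto\nu y$, and compatibility with the trace equation forces $\mu\in\mathbb{F}_{\bar{q}}^\times$ with $\mu=\nu^{q+1}$. This leaves only $(q+1)(\bar{q}-1)$ possibilities, so $|\tilde{C}|\le(q+1)(\bar{q}-1)m/s=|\bar{C}|$ and $\aut(\cX_{a,b,n,s})_{P_\infty}=\bar{G}$. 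The replacement of the naive factor $(q-1)$ by $(\bar{q}-1)$ coming from the trace structure is the key new ingredient here.

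It remains to prove that $\aut(\cX_{a,b,n,s})$ fixes $P_\infty$, for which I would follow Lemma \ref{CondB} through Proposition \ref{normalizzano}: a regular-differential computation (Lemmas \ref{exact}, \ref{gapsdiff}) separates the Weierstrass semigroup of a point off $O\cup\{P_\infty\}$ from those of $P_\infty$ and of $O$, so such a point shares no orbit with them; then, supposing the orbit $O_\infty$ of $P_\infty$ absorbed an extra long $\bar{G}$-orbit, the Hurwitz genus formula combined with the bounds of Theorem \ref{Hurwitz} is pushed to a numerical contradiction. The hypothesis $b<a$ or $q^2\nmid(\frac{m}{s}-1)$ enters exactly here, as the condition excluding the one configuration where this contradiction collapses: when $b=a$ the curve degenerates to $y^{q^2}-y=z^{m/s}$ and the case $q^2\mid(\frac{m}{s}-1)$ is precisely the exceptional case of \cite[Theorem 3.2]{BMZsep}, in which a strictly larger automorphism group moving $P_\infty$ appears. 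I expect this final step --- pinning down the numerical role of the hypothesis and ruling out the exceptional lift, in analogy with the ${\rm PGU}(3,q)$ alternative of Lemma \ref{3divnlift} --- to be the main obstacle. Once $P_\infty$ is fixed, the stabilizer computation yields $\aut(\cX_{a,b,n,s})=\aut(\cX_{a,b,n,s})_{P_\infty}=\bar{G}$, as claimed.
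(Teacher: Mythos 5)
Your overall architecture (realize $\cX_{a,b,n,s}=\cY_{n,s}/E_{\bar{q}}$, descend the group $(S_{q^3}/E_{\bar{q}})\rtimes C_{(q+1)(\bar{q}-1)m/s}$ via the normalizer of $E_{\bar{q}}$, bound the stabilizer of $P_\infty$, then show $P_\infty$ cannot move) matches the paper's, and your conjugation computation for the lower bound is correct. However, there is a genuine gap in your bound for the $p$-part of the stabilizer. You claim that inserting the contributions $i(\cdot)$ into the Hurwitz genus formula for $\cX_{a,b,n,s}\to\cX_{a,b,n,s}/\tilde{S}$ ``forces'' $\tilde{S}=\bar{S}$. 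It cannot: as in Lemma \ref{normpsyl}, the term $-2|\tilde{S}|$ coming from the rational quotient is exactly cancelled by the total contribution $2\left(|\tilde{S}|-q^3/\bar{q}\right)$ of the elements of $\tilde{S}\setminus(S_{q^3}/E_{\bar{q}})$, each of which satisfies $i(\gamma)\geq 2$; hence the formula holds with equality for \emph{every} conceivable order of $\tilde{S}$, and all it yields is $i(\gamma)=2$ for those elements (this is precisely the situation on the Hermitian curve, whose point stabilizer has $p$-part of order $q^3$ with all ``extra'' elements contributing $2$). This information does give, as you note, the ramification filtration and hence the normality of $E_q/E_{\bar{q}}$ in the stabilizer --- but you never use it. The missing step, which is the paper's Lemma \ref{lem:pX}, is to pass to the quotient by $E_q/E_{\bar{q}}$, whose fixed field is $\mathbb{F}_{q^{2n}}(y,z)$ with $z^{m/s}=y^{q^2}-y$, and to invoke the known automorphism group of that plane curve (\cite[Theorem 3.2]{BMZsep}) to get $|\tilde{S}/(E_q/E_{\bar{q}})|\leq q^2$, whence $|\tilde{S}|\leq q^3/\bar{q}$. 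Without an external upper bound of this kind, no Riemann--Hurwitz bookkeeping at $P_\infty$ can exclude a larger $\tilde{S}$. A milder instance of the same issue affects your $p'$-part: a semisimple automorphism of $cy^{q+1}=\mathrm{Tr}_{q/\bar{q}}(x)$ fixing the point at infinity is diagonalizable, not diagonal (it may involve translation terms), so your count of $(q+1)(\bar{q}-1)$ possibilities needs either a Riemann--Roch/conjugation argument or, as in the paper's Lemma \ref{lem:complX}, the citation of \cite[Theorem 3.3]{BMZsep}.

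A secondary discrepancy concerns where the hypothesis enters. In the paper it is consumed at the outset: when $b=a$ the curve \emph{is} the plane curve $z^{m/s}=y^{q^2}-y$ and the theorem is exactly case (ii) of \cite[Theorem 3.2]{BMZsep}, valid precisely when $q^2\nmid(\frac{m}{s}-1)$; for $b<a$ no condition is needed anywhere. Consequently your expectation that the ``main obstacle'' is ruling out a ${\rm PGU}$-type lift in the final orbit analysis is misplaced: for $b<a$ the quotient $\cX_{a,b,n,s}/C_{m/s}$ is the non-Hermitian curve $cy^{q+1}=\mathrm{Tr}_{q/\bar{q}}(x)$, and \cite[Theorem 12.11]{HKT} guarantees that its automorphisms fix the point at infinity, so once ${\rm Aut}(\cX_{a,b,n,s})$ is known to act on $O\cup\{P_\infty\}$ it must fix $P_\infty$ (the paper's Lemma \ref{lem:normX} and Corollary \ref{cor:actX}); no analogue of Lemma \ref{3divnlift} is required. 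Also, the paper does not use a Weierstrass/differential separation for $\cX_{a,b,n,s}$: that argument (Lemma \ref{CondB}) works only under a numerical restriction on $m/s$, and the paper instead runs the orbit dichotomy and the Hurwitz/orbit-stabilizer contradictions directly in all cases --- the part of your plan that does align with the paper's final step.
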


\begin{remark}
Suppose that $b=a$, that is $\bar{q}=q$. Then $\mathcal{X}_{a,a,n,s}$ is the plane curve $Z^{m/s}=Y^{q^2}-Y$. If $q^2\nmid(\frac{m}{s}-1)$, then we can apply \cite[Theorem 3.2]{BMZsep}, whose case (ii) holds. This immediately yields the claim of Theorem \ref{mainX}.
Therefore, we can assume from now on that $b<a$.
\end{remark}

We start by noting that $\mathcal{X}_{a,b,n,s}$ is the quotient curve of $\mathcal{Y}_{n,s}$ over $E_{\bar q}$.
Indeed, define $x=cu-(cu)^{\bar{q}}$, $y=v$, $z=w$.
Then the function field of $\mathcal{X}_{a,b,n,s}$ is exactly $\mathbb{F}_{q^{2n}}(x,y,z)$; see \cite[Definition 3.1]{TTT}.
By direct computation $E_{\bar q}$ fixes $x,y,z$, i.e. $\mathbb{K}(\cX_{a,b,n,s})\subseteq\mathbb{K}(\cY_{n,s})^{E_{\bar q}}$. Since $|E_{\bar q}|=[\KK(\cY_{n,s}):\KK(\cX_{a,b,n,s})]$, equality holds and $\cX_{a,b,n,s}=\mathcal{Y}_{n,s}/E_{\bar q}$.

Therefore, an automorphism group $G$ of $\mathcal{X}_{a,b,n,s}$ is given by
\[
G:=\frac{\mathrm{N}_{{\rm Aut}(\cY_{n,s})}(E_{\bar q})}{E_{\bar q}}=\frac{S_{q^3}}{E_{\bar q}}\rtimes C_{(q+1)(\bar{q}-1)\frac{m}{s}}.
\]
The group $G$ has exactly one fixed point, namely $P_{\infty}$. We want to prove that $G$ is the whole stabilizer of $P_{\infty}$ in ${\rm Aut}(\cX_{a,b,n,s})$.

To this aim, we compute in the next propositions the contribution of the $p$-elements $\alpha\in G$ to the covering $\cX_{a,b,n,s}\to\cX_{a,b,n,s}/H$, where $H$ is any subgroup of ${\rm Aut}(\cX_{a,b,n,s})$ containing $\alpha$.

\begin{proposition}\label{contr1X}
Let $\alpha\in E_{q}/E_{\bar q}$. Then $i(\alpha)=(q^n+1)/s+1$.
\end{proposition}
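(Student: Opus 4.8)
The plan is to follow the template of Lemma \ref{contr1}, but a genuine obstruction appears: unlike the situation on $\mathcal{Y}_{n,s}$, the nontrivial elements of $E_q/E_{\bar{q}}$ need \emph{not} be conjugate in $\aut(\mathcal{X}_{a,b,n,s})$. Indeed, the cyclic part of the normalizer acts on $E_q/E_{\bar{q}}$ by multiplication by $\mathbb{F}_{\bar{q}}^{\times}$, which is transitive on the nonzero elements only when $q=\bar{q}^{2}$ (i.e. when $E_q/E_{\bar{q}}$ is one-dimensional over $\mathbb{F}_{\bar{q}}$). Hence I cannot equate the contributions $i(\alpha)$ by a conjugacy argument, and instead I will compute the ramification filtration of $E_q/E_{\bar{q}}$ at $P_\infty$ \emph{directly}, by transporting the known filtration on $\mathcal{Y}_{n,s}$ through the quotient by $E_{\bar{q}}$.

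First I would observe that a nontrivial $\alpha\in E_q/E_{\bar{q}}$ has order $p$, that it acts on $\mathcal{X}_{a,b,n,s}$ as a translation $x\mapsto x+\text{const}$ fixing $y$ and $z$ (so the fixed field of $E_q/E_{\bar{q}}$ is $\mathbb{K}(y,z)=\mathbb{K}(\mathcal{Y}_{n,s})^{E_q}$), and in particular that it fixes $P_\infty$. Since $\mathcal{X}_{a,b,n,s}$ is $\mathbb{F}_{q^{2n}}$-maximal, Corollary \ref{1actionp2} forces $\langle\alpha\rangle$ to fix exactly one rational point, which must then be $P_\infty$; consequently
$$i(\alpha)=\#\{\,i\ge 0 : \alpha\in (E_q/E_{\bar{q}})_{P_\infty}^{(i)}\,\}.$$
So everything reduces to locating the single break of this filtration.

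The key input is the filtration of $E_q$ on $\mathcal{Y}_{n,s}$, which I read off from Lemma \ref{contr1}: each nontrivial element of $E_q$ fixes only $P_\infty$ and contributes $(q^n+1)/s+1$, so it lies in exactly the ramification groups of index $0,\dots,(q^n+1)/s$; hence $E_q^{(i)}=E_q$ for $0\le i\le (q^n+1)/s$ and $E_q^{(i)}=\{id\}$ afterwards. As this lower filtration is constant (the Herbrand function is the identity) up to its unique jump, the break in the upper numbering also sits at $(q^n+1)/s$. Herbrand's theorem \cite[Ch.\ IV]{serre1979} then gives $(E_q/E_{\bar{q}})^{v}_{\mathrm{upper}}=E_q^{v}_{\mathrm{upper}}E_{\bar{q}}/E_{\bar{q}}$, which equals the whole group for $v\le (q^n+1)/s$ and is trivial otherwise; converting back to the lower numbering (again the identity while the group is full) yields $(E_q/E_{\bar{q}})_{P_\infty}^{(i)}=E_q/E_{\bar{q}}$ for $0\le i\le (q^n+1)/s$ and $\{id\}$ for $i>(q^n+1)/s$. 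Therefore every nontrivial $\alpha$ lies in precisely the ramification groups of index $0,\dots,(q^n+1)/s$, giving $i(\alpha)=(q^n+1)/s+1$.

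The main obstacle is exactly the failure of the conjugacy shortcut; the Herbrand computation circumvents it by showing the filtration is a single ``full-then-trivial'' jump, which by itself forces all nontrivial elements to share the same contribution. As a cross-check one could instead combine the transitivity of the different in the tower $\mathcal{Y}_{n,s}\to\mathcal{X}_{a,b,n,s}\to\mathcal{Y}_{n,s}/E_q$ (all places totally ramified over $P_\infty$) with the differents $(q-1)\big((q^n+1)/s+1\big)$ of $\mathcal{Y}_{n,s}/E_q$ and $(\bar{q}-1)\big((q^n+1)/s+1\big)$ of $\mathcal{Y}_{n,s}\to\mathcal{X}_{a,b,n,s}$ coming from the same filtration, recovering $i(\alpha)=(q^n+1)/s+1$ after dividing by $\#(E_q/E_{\bar{q}})-1=q/\bar{q}-1$.
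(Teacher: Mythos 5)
Your proof is correct, and it takes a genuinely different route from the paper's. The paper opens with exactly your observation that the conjugacy argument of Lemma \ref{contr1} breaks down, but it repairs this without any ramification-filtration machinery: given nontrivial $\alpha_1,\alpha_2\in E_q/E_{\bar q}$ with lifts $\beta_1,\beta_2\in E_q\setminus E_{\bar q}$, it sets $H_i=\langle\beta_i\rangle\times E_{\bar q}$, observes that $\mathcal{X}_{a,b,n,s}/\langle\alpha_i\rangle\cong\mathcal{Y}_{n,s}/H_i$ and that $g(\mathcal{Y}_{n,s}/H_1)=g(\mathcal{Y}_{n,s}/H_2)$ by Lemma \ref{contr1}, and deduces $i(\alpha_1)=i(\alpha_2)$ since for a cyclic group of prime order the quotient genus determines the common contribution of its nontrivial elements; the common value is then extracted from a single application of the Hurwitz genus formula to $\mathbb{F}_{q^{2n}}(x,y,z)/\mathbb{F}_{q^{2n}}(y,z)$, using $g(\mathbb{F}_{q^{2n}}(y,z))=(m/s-1)(q^2-1)/2$. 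You instead transport the single-jump filtration of $E_q$ at $P_\infty$ on $\mathcal{Y}_{n,s}$ through the quotient by $E_{\bar q}$ via Herbrand's theorem and read $i(\alpha)$ off the resulting filtration of $E_q/E_{\bar q}$. What your route buys: it determines the entire ramification filtration (full up to index $(q^n+1)/s$, then trivial), which is strictly more than the proposition asserts, it makes the equality of all the contributions automatic rather than an extra step, and it never needs the genus of the fixed field; what it costs is the upper-numbering formalism of \cite{serre1979}, where the paper stays with elementary genus bookkeeping. One small repair you should make: Corollary \ref{1actionp2} as stated gives a unique fixed \emph{rational} point, whereas to identify $i(\alpha)$ with the count of ramification groups at $P_\infty$ alone (and likewise to read the single-jump $E_q$-filtration off Lemma \ref{contr1}, whose statement concerns the total contribution over all points) you need that a $p$-element fixes no point of the curve other than $P_\infty$; this is precisely \cite[Lemma 11.129]{HKT} for curves of zero $p$-rank, or alternatively follows because $\mathcal{Y}_{n,s}\to\mathcal{Y}_{n,s}/E_q$ ramifies only at $P_\infty$, so nothing below it can ramify elsewhere. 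Finally, your different-transitivity remark computes only the average contribution, so it is rightly labelled a cross-check rather than an independent proof.
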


\begin{proof}
Differently from the proof of Lemma \ref{contr1}, the non-trivial elements of $E_q/E_{\bar{q}}$ are not in a unique orbit under conjugation in $G$. However, they still give the same contribution. To see this, let $\alpha_1,\alpha_2\in (E_q/E_{\bar{q}})\setminus\{id\}$, and let $\beta_1,\beta_2\in E_q\setminus E_{\bar{q}}$ be such that $\alpha_1,\alpha_2$ are the cosets respectively of $\beta_1,\beta_2$ in $E_q/E_{\bar{q}}$.
Define $H_i=\langle \beta_i,E_{\bar q}\rangle=\langle \beta_i\rangle\times E_{\bar{q}}$, for $i=1,2$.
By Lemma \ref{contr1}, the curves $\cY_{n,s}/H_1$ and $\cY_{n,s}/H_2$ have the same genus. Clearly, $\cY_{n,s}/H_i\cong\cX_{a,b,n,s}/\langle\alpha_i\rangle$ for $i=1,2$.
Thus, $g(\cX_{a,b,n,s}/\langle\alpha_1\rangle)=g(\cX_{a,b,n,s}/\langle\alpha_2\rangle)$.
As $\langle\alpha_i\rangle$ is cyclic of prime order, this implies $i(\alpha_1)=i(\alpha_2)$.

Since $\alpha$ fixes $y$ and $z$, and the extension $\mathbb{F}_{q^{2n}}(x,y,z)/\mathbb{F}_{q^{2n}}(y,z)$ has degree $q/\bar{q}=|E_q/E_{\bar{q}}|$, the fixed field of $E_q/E_{\bar q}$ is exactly $\mathbb{F}_{q^{2n}}(y,z)$, whose genus is $g(\mathbb{F}_{q^{2n}}(y,z))=\frac{(m/s-1)(q^2-1)}{2}$.
Now the claim follows by applying the Hurwitz genus formula to $\mathbb{F}_{q^{2n}}(x,y,z)/\mathbb{F}_{q^{2n}}(y,z)$.
\end{proof}

\begin{proposition}\label{contr2X}
Let $\alpha\in (S_{q^3}/E_{\bar q})\setminus(E_q/E_{\bar q})$. Then $i(\alpha)=m/s+1$.
\end{proposition}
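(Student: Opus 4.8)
The plan is to rerun the argument of Lemma~\ref{contr2} for the quotient group $\bar S:=S_{q^3}/E_{\bar{q}}$ acting on $\cX_{a,b,n,s}=\cY_{n,s}/E_{\bar{q}}$, now using Proposition~\ref{contr1X} in the role played there by Lemma~\ref{contr1}. First I would identify the fixed field of $\bar S$ and show it is rational: since $z=w$ is fixed by $S_{q^3}$, it is fixed by $\bar S$, and by Equation~\eqref{div z} the pole divisor of $z$ is $(q^3/\bar{q})P_\infty$, so $[\mathbb{F}_{q^{2n}}(x,y,z):\mathbb{F}_{q^{2n}}(z)]=q^3/\bar{q}=|\bar S|$. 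Hence the fixed field of $\bar S$ is exactly $\mathbb{F}_{q^{2n}}(z)$ and $\cX_{a,b,n,s}/\bar S$ has genus $0$. Moreover, by Corollary~\ref{1actionp2} the $p$-group $\bar S$ fixes only $P_\infty$, so for $\alpha\in\bar S$ the quantity $i(\alpha)$ is just the number of ramification groups $\bar S^{(i)}$ at $P_\infty$ containing $\alpha$.

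The decisive step is to show that the only non-trivial ramification groups $\bar S^{(i)}$ at $P_\infty$ are $\bar S$ and $\bar E:=E_q/E_{\bar{q}}$. From Lemmas~\ref{contr1} and \ref{contr2}, the filtration of $S_{q^3}$ at $P_\infty$ on $\cY_{n,s}$ is two-step: $S_{q^3}^{(i)}=S_{q^3}$ for $0\le i\le m/s$, $S_{q^3}^{(i)}=E_q$ for $m/s+1\le i\le(q^n+1)/s$, and $S_{q^3}^{(i)}=\{id\}$ afterwards. Since $\cX_{a,b,n,s}=\cY_{n,s}/E_{\bar{q}}$ with $E_{\bar{q}}\subseteq E_q$, Herbrand's theorem (see \cite[Chapter~IV]{serre1979}) shows that the ramification groups of $\bar S=S_{q^3}/E_{\bar{q}}$ are the images of those of $S_{q^3}$, hence all lie in $\{S_{q^3}E_{\bar{q}}/E_{\bar{q}},\,E_qE_{\bar{q}}/E_{\bar{q}},\,\{id\}\}=\{\bar S,\bar E,\{id\}\}$. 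Thus the filtration of $\bar S$ is again two-step, and by Proposition~\ref{contr1X} each $\alpha\in\bar E\setminus\{id\}$ lies in exactly $N:=(q^n+1)/s+1$ ramification groups, so there are $N$ non-trivial ones in total.

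It remains to count how many of them equal $\bar S$. Writing $J$ for this number, the different of $\cX_{a,b,n,s}\to\cX_{a,b,n,s}/\bar S$ has degree $J(|\bar S|-1)+(N-J)(|\bar E|-1)$ with $|\bar S|=q^3/\bar{q}$ and $|\bar E|=q/\bar{q}$. Substituting this, $g(\cX_{a,b,n,s})$, and the base genus $0$ into the Hurwitz genus formula yields a linear equation in $J$ whose solution, after simplification, is $J=m/s+1$. Because the filtration is two-step, every $\alpha\in\bar S\setminus\bar E$ belongs precisely to $\bar S^{(0)},\dots,\bar S^{(J-1)}$, so $i(\alpha)=J=m/s+1$, as claimed.

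I expect the middle step to be the genuine obstacle. A naive transcription of Lemma~\ref{contr2} would assert that $\bar E$ is the unique proper normal subgroup of $\bar S$, but this is false: the preimages of the subgroups of $\bar S/\bar E\cong E_{q^2}$ are normal as well, which a priori leaves room for intermediate ramification groups between $\bar E$ and $\bar S$ and could make distinct elements of $\bar S\setminus\bar E$ contribute differently. Invoking Herbrand's theorem to inherit the two-step filtration already established on $\cY_{n,s}$ is exactly what excludes such intermediate groups and forces all elements of $\bar S\setminus\bar E$ to share the single value $m/s+1$.
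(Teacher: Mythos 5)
Your proof is correct, and the route you take at the decisive step is genuinely different from the paper's --- in a way that matters. The paper's proof has the same skeleton as yours (fixed field of $S_{q^3}/E_{\bar q}$ equal to $\mathbb{F}_{q^{2n}}(z)$, hence rational; only $S_{q^3}/E_{\bar q}$ and $E_q/E_{\bar q}$ can occur as nontrivial ramification groups; Hurwitz count with the number $(q^n+1)/s+1$ of nontrivial groups supplied by Proposition \ref{contr1X}), but it justifies the middle step by asserting exactly the claim you warn against: ``As $E_{\bar q}$ is central in $S_{q^3}$ and $E_q$ is the only proper normal subgroup of $S_{q^3}$ containing $E_{\bar q}$, we have that $E_q/E_{\bar q}$ is the only proper normal subgroup of $S_{q^3}/E_{\bar q}$.'' As you point out, this is false as stated: since $(S_{q^3}/E_{\bar q})/(E_q/E_{\bar q})\cong E_{q^2}$ is abelian, every subgroup lying between $E_q/E_{\bar q}$ and $S_{q^3}/E_{\bar q}$ is normal in $S_{q^3}/E_{\bar q}$, and there are many such subgroups (the same issue affects the paper's proof of Lemma \ref{contr2}). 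The paper's argument can be repaired --- by Lemma \ref{highnormal} the higher ramification groups at $P_\infty$ are normal in the full stabilizer of $P_\infty$, hence invariant also under conjugation by the cyclic complement, and no intermediate subgroup has this stronger invariance --- but that repair is not what is written. Your substitute, Herbrand's theorem applied to the quotient $\cX_{a,b,n,s}=\cY_{n,s}/E_{\bar q}$, cleanly imports the two-step filtration already available on $\cY_{n,s}$ from the \emph{statements} of Lemmas \ref{contr1} and \ref{contr2}: in lower numbering every ramification group of $S_{q^3}/E_{\bar q}$ is of the form $S_{q^3}^{(u)}E_{\bar q}/E_{\bar q}$ up to reindexing, hence lies in $\{S_{q^3}/E_{\bar q},\,E_q/E_{\bar q},\,\{id\}\}$, with no classification of normal subgroups needed. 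The rest of your argument (counting the nontrivial groups via Proposition \ref{contr1X}, solving the linear Hurwitz equation to get $J=m/s+1$, and reading off $i(\alpha)=J$ for $\alpha\notin E_q/E_{\bar q}$) agrees with the paper's computation and is correct. In short: the paper's approach stays entirely on $\cX_{a,b,n,s}$ but rests on a group-theoretic claim that needs patching, while yours spends Herbrand's theorem to bypass that claim and, as written, is the more watertight of the two.
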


\begin{proof}
    The proof generalizes the one of Lemma \ref{contr2}. Since $z$ is fixed by $S_{q^3}/E_{\bar q}$ and $(z)_{\infty}=q^3/{\bar q}P_{\infty}$ by Equation \eqref{div z}, we have that the fixed field of $S_{q^3}/E_{\bar q}$ is $\mathbb{F}_{q^{2n}}(z)$ and hence is rational.
    As $E_{\bar q}$ is central in $S_{q^3}$ and $E_q$ is the only proper normal subgroup of $S_{q^3}$ containing $E_{\bar q}$, we have that $E_q/E_{\bar q}$ is the only proper normal subgroup of $S_{q^3}/E_{\bar q}$. Therefore, as in the proof of Lemma \ref{contr2}, the only possible non-trivial higher ramification groups $(S_{q^3}/E_{\bar q})^{(i)}$ are $S_{q^3}/E_{\bar q}$ and $E_q/E_{\bar q}$.
    Then the degree of the different divisor in $\mathbb{F}_{q^{2n}}(x,y,z)/\mathbb{F}_{q^{2n}}(z)$ is $(n-j)(q/\bar{q}-1)+j(q^3/\bar{q}-1)$, where $j$ is the number of ramification groups coinciding with $S_{q^3}/E_{\bar q}$ and $n=(q^n+1)/s+1$ by Lemma \ref{contr1X}.
    From the Hurwitz genus formula applied to $\mathbb{F}_{q^{2n}}(x,y,z)/\mathbb{F}_{q^{2n}}(z)$, it follows that $j=m/s+1$. The claim follows.
\end{proof}

By Lemma \ref{highnormal}, we can write ${\rm Aut}(\cX_{a,b,n,s})_{P_{\infty}}=\tilde{S}\rtimes\tilde{C}$, where $\tilde{S}$ is the Sylow $p$-subgroup of ${\rm Aut}(\cX_{a,b,n,s})$ and $\tilde{C}$ is a cyclic $p^\prime$-group.
Clearly $S_{q^3}/E_{\bar q}\leq \tilde{S}$ and, up to conjugation, we can assume $C_{(q+1)(\bar{q}-1)m/s}\leq \tilde{C}$. 

\begin{lemma}\label{lem:complX}
The equality $\tilde{C}=C_{(q+1)(\bar{q}-1)m/s}$ holds.
\end{lemma}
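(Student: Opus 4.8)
The plan is to mirror the proof of Lemma \ref{compfix}. Up to conjugation assume $C_{(q+1)(\bar{q}-1)m/s}\subseteq\tilde{C}$. As $\tilde{C}$ is cyclic, its unique subgroup $C_{m/s}$ of order $m/s$ is characteristic, hence normalized by $\tilde{C}$; since $C_{m/s}={\rm Gal}(\mathbb{F}_{q^{2n}}(x,y,z)/\mathbb{F}_{q^{2n}}(x,y))$ is exactly the kernel of the action of $\tilde{C}$ on the fixed field of $C_{m/s}$, the quotient $\tilde{C}/C_{m/s}$ acts faithfully on $\cX_{a,b,n,s}/C_{m/s}$ and fixes the point $\bar{P}_{\infty}$ lying below $P_{\infty}$. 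First I would identify this quotient curve: its function field is $\mathbb{F}_{q^{2n}}(x,y)$ with $cy^{q+1}=\mathrm{Tr}_{q/\bar{q}}(x)$, which is precisely the quotient $\mathcal{H}_q/E_{\bar{q}}$ of the Hermitian curve by the central subgroup $E_{\bar{q}}$. Thus the whole statement reduces to the single inequality $|\tilde{C}/C_{m/s}|\leq(q+1)(\bar{q}-1)$ for the tame part of the stabilizer of $\bar{P}_{\infty}$ in $\aut(\mathcal{H}_q/E_{\bar{q}})$; together with the reverse inequality $(q+1)(\bar{q}-1)=|C_{(q+1)(\bar{q}-1)m/s}/C_{m/s}|\leq|\tilde{C}/C_{m/s}|$ this forces $\tilde{C}=C_{(q+1)(\bar{q}-1)m/s}$.

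Unlike in Lemma \ref{compfix}, where the bound was immediate from $\aut(\mathcal{H}_q)={\rm PGU}(3,q)$ being explicitly known, here $\aut(\mathcal{H}_q/E_{\bar{q}})$ is not given, so the quantitative heart is the computation of the relevant stabilizer through the cover $\mathcal{H}_q\to\mathcal{H}_q/E_{\bar{q}}$. This cover is Galois of $p$-power degree $\bar{q}$ and, since $E_{\bar{q}}$ is a $p$-group acting on the $p$-rank-zero curve $\mathcal{H}_q$, by Corollary \ref{1actionp2} it is totally ramified over $\bar{P}_{\infty}$ and unramified elsewhere. In ${\rm PGU}(3,q)$ the stabilizer of $P_{\infty}$ is the semidirect product of its Sylow $p$-subgroup, of order $q^3$, with a cyclic complement $\{(x,y)\mapsto(a^{q+1}x,ay)\colon a\in\mathbb{F}_{q^2}^\times\}$ of order $q^2-1$; conjugating the central translation $x\mapsto x+\lambda/c$ by such a map gives $x\mapsto x+a^{q+1}\lambda/c$, so the elements normalizing $E_{\bar{q}}$ are exactly those with $a^{q+1}\in\mathbb{F}_{\bar{q}}^\times$. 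As the norm $a\mapsto a^{q+1}$ maps $\mathbb{F}_{q^2}^\times$ onto $\mathbb{F}_q^\times$ with kernel of order $q+1$, these $a$ form a cyclic group of order $(q+1)(\bar{q}-1)$. Hence the stabilizer of $P_{\infty}$ in $\mathrm{N}_{{\rm PGU}(3,q)}(E_{\bar{q}})$ has tame part $C_{(q+1)(\bar{q}-1)}$, and its image in $\aut(\mathcal{H}_q/E_{\bar{q}})$ accounts for a tame group of order exactly $(q+1)(\bar{q}-1)$ fixing $\bar{P}_{\infty}$.

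The main obstacle is to prove that nothing larger can occur, i.e. that every tame automorphism of $\mathcal{H}_q/E_{\bar{q}}$ fixing $\bar{P}_{\infty}$ lifts to an automorphism of $\mathcal{H}_q$ normalizing $E_{\bar{q}}$, so that the computation above becomes an upper bound. I would try to secure this from the rigidity of the cover: $\bar{P}_{\infty}$ is its unique, totally and wildly ramified branch point, so a tame $\bar{\sigma}$ fixing $\bar{P}_{\infty}$ carries the $E_{\bar{q}}$-cover $\mathcal{H}_q$ to an $\mathbb{F}_{q^2}$-maximal $E_{\bar{q}}$-cover of the same genus branched only over $\bar{P}_{\infty}$, which one argues must be $\mathcal{H}_q$ itself, whence $\bar{\sigma}$ lifts. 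An equivalent formulation, which avoids computing $\aut(\mathcal{H}_q/E_{\bar{q}})$ and is perhaps more robust now that Theorem \ref{mainY} is available, is to lift $\tilde{C}$ one level up through $\cY_{n,s}\to\cX_{a,b,n,s}=\cY_{n,s}/E_{\bar{q}}$ and read the bound directly off $\mathrm{N}_{\aut(\cY_{n,s})}(E_{\bar{q}})=S_{q^3}\rtimes C_{(q+1)(\bar{q}-1)m/s}$, whose tame part is exactly $C_{(q+1)(\bar{q}-1)m/s}$. In both formulations the genuine difficulty is the same, namely justifying that the tame group descends from, or lifts past, the wildly ramified point $\bar{P}_{\infty}$; the remaining norm computation is routine.
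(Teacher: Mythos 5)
Your reduction matches the paper's first step: up to conjugation $C_{(q+1)(\bar{q}-1)m/s}\subseteq\tilde{C}$, the subgroup $C_{m/s}$ is normal in the cyclic group $\tilde{C}$, and $\tilde{C}/C_{m/s}$ acts faithfully on the fixed field $\mathbb{F}_{q^{2n}}(x,y)$ with $cy^{q+1}=\mathrm{Tr}_{q/\bar{q}}(x)$, which is indeed $\mathcal{H}_q/E_{\bar{q}}$; your normalizer computation producing the cyclic group of order $(q+1)(\bar{q}-1)$ is also correct. But the quantitative heart of the lemma is the \emph{upper} bound, and there your argument has a genuine gap that you yourself flag: the claim that every tame automorphism $\bar{\sigma}$ of $\mathcal{H}_q/E_{\bar{q}}$ fixing $\bar{P}_{\infty}$ lifts to $\mathcal{H}_q$ is asserted via ``rigidity of the cover'' but never proved, and the sketch is circular. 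Pulling the cover back along $\bar{\sigma}$ always yields an $E_{\bar{q}}$-cover with the same branch point, the same ramification data, the same genus, and total space abstractly isomorphic to $\mathcal{H}_q$ --- these invariants hold automatically for the twisted cover, so they cannot single out the original one. In Artin--Schreier terms, the elementary abelian subextensions of degree $\bar{q}$ correspond to $b$-dimensional $\mathbb{F}_p$-subspaces $\Delta$ of $F_0/\wp(F_0)$, where $F_0$ is the function field of $\mathcal{H}_q/E_{\bar{q}}$ and $\wp(t)=t^p-t$, and $\bar{\sigma}$ lifts precisely when $\bar{\sigma}(\Delta)=\Delta$; uniqueness of $\Delta$ among subspaces with the given ramification behaviour is exactly equivalent to the lifting statement, not a consequence of genus and branch-locus bookkeeping. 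Your alternative formulation (reading the bound off $\mathrm{N}_{\aut(\cY_{n,s})}(E_{\bar{q}})=S_{q^3}\rtimes C_{(q+1)(\bar{q}-1)m/s}$) has the identical defect: it presupposes that $\tilde{C}$ lifts through $\cY_{n,s}\to\cX_{a,b,n,s}$, which is again the unproven claim.

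The paper closes precisely this hole by an external input rather than by lifting: since $cy^{q+1}=\mathrm{Tr}_{q/\bar{q}}(x)$ is a plane curve given by separated polynomials, \cite[Theorem 3.3]{BMZsep} bounds the $p'$-part of the stabilizer of the point at infinity of $\mathbb{F}_{q^{2n}}(x,y)$ by $(q+1)(\bar{q}-1)$; combined with the containment $C_{(q+1)(\bar{q}-1)m/s}\subseteq\tilde{C}$, which gives $|\tilde{C}/C_{m/s}|\geq(q+1)(\bar{q}-1)$, this forces equality. So your proposal consists of a correct reduction and a correct lower bound, but it does not become a proof until the descent/lifting step is replaced by such a citation or by an actual argument for the uniqueness of the $E_{\bar{q}}$-subcover.
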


\begin{proof}
From $C_{m/s}\triangleleft \tilde{C}$ it follows that $\tilde{C}/C_{m/s}$ is an automorphism group of the fixed field $\mathbb{F}_{q^{2n}}(x,y)$ of $C_{m/s}$, of order at least $|C_{(q+1)(\bar{q}-1)\frac{m}{s}}/C_{m/s}|=(q+1)(\bar{q}-1)$.
By \cite[Theorem 3.3]{BMZsep}, the $p^\prime$-part of $|{\rm Aut}(\mathbb{F}_{q^{2n}}(x,y))_{P_{\infty}}|$ is at most $(q+1)(\bar{q}-1)$.
Then equality holds and the claim follows.
\end{proof}

\begin{lemma}\label{lem:pX}
The equality $\tilde{S}=S_{q^3}/E_{\bar q}$ holds.
\end{lemma}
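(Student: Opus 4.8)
The plan is to adapt the argument of Lemma \ref{normpsyl} and then combine it with a quotient argument in the spirit of Proposition \ref{stabY}. Since $S_{q^3}/E_{\bar{q}}\subseteq\tilde{S}$ and the fixed field of $S_{q^3}/E_{\bar{q}}$ is the rational field $\mathbb{F}_{q^{2n}}(z)$ (as shown in the proof of Proposition \ref{contr2X}), the fixed field of $\tilde{S}$ is rational as well, so $\cX_{a,b,n,s}/\tilde{S}$ has genus $0$. I would apply the Hurwitz genus formula in the form \eqref{eqi} to the covering $\cX_{a,b,n,s}\to\cX_{a,b,n,s}/\tilde{S}$, splitting the sum $\sum_{\gamma\in\tilde{S}\setminus\{id\}}i(\gamma)$ according to whether $\gamma$ lies in $E_q/E_{\bar{q}}$, in $(S_{q^3}/E_{\bar{q}})\setminus(E_q/E_{\bar{q}})$, or in $\tilde{S}\setminus(S_{q^3}/E_{\bar{q}})$. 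By Propositions \ref{contr1X} and \ref{contr2X} the first two types contribute exactly $(q^n+1)/s+1$ and $m/s+1$ respectively, while each element of the third type satisfies $i(\gamma)\geq 2$ because $\tilde{S}=\tilde{S}_{P_\infty}^{(0)}=\tilde{S}_{P_\infty}^{(1)}$.

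The first step is to check that, after cancelling the terms in $|\tilde{S}|$, the resulting lower bound for $2g(\cX_{a,b,n,s})-2$ coincides with the value supplied by the genus formula for $\cX_{a,b,n,s}$. This forces equality, hence $i(\gamma)=2$ for every $\gamma\in\tilde{S}\setminus(S_{q^3}/E_{\bar{q}})$, and pins down the ramification filtration at $P_\infty$: one finds $\tilde{S}=\tilde{S}_{P_\infty}^{(0)}=\tilde{S}_{P_\infty}^{(1)}$, after which $S_{q^3}/E_{\bar{q}}$ and $E_q/E_{\bar{q}}$ appear as the subsequent higher ramification groups. In particular, by item (3) of Lemma \ref{highnormal}, both $S_{q^3}/E_{\bar{q}}$ and $E_q/E_{\bar{q}}$ are normal in $\tilde{S}$. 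Unlike in Lemma \ref{contr1}, I cannot invoke a single conjugacy class of $p$-elements here, which is exactly why the normality is read off from the ramification groups, as already done in Proposition \ref{contr1X}.

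The second step is to pass to the quotient by $E_q/E_{\bar{q}}$. Its fixed field is $\mathbb{F}_{q^{2n}}(y,z)$ with $z^{m/s}=y^{q^2}-y$, and $E_q/E_{\bar{q}}$ is precisely the kernel of the action of $\tilde{S}$ on this field, so $\tilde{S}/(E_q/E_{\bar{q}})$ acts faithfully on $\mathbb{F}_{q^{2n}}(y,z)$ as a $p$-group. As recalled in the proof of Proposition \ref{stabY}, the equality $\gcd(q^2+1,q^n+1)=2$ for odd $n$ gives $|{\rm Aut}(\mathbb{F}_{q^{2n}}(y,z))|=\frac{m}{s}q^2(q^2-1)$ by \cite[Theorem 3.2]{BMZsep}, whose $p$-part is $q^2$. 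Since $(S_{q^3}/E_{\bar{q}})/(E_q/E_{\bar{q}})\cong S_{q^3}/E_q$ already has order $q^2$ and is contained in $\tilde{S}/(E_q/E_{\bar{q}})$, the $p$-group $\tilde{S}/(E_q/E_{\bar{q}})$ must equal $S_{q^3}/E_q$, whence $\tilde{S}=S_{q^3}/E_{\bar{q}}$. I expect the main obstacle to be the bookkeeping in the first step: one must verify that the combinatorial count of ramification-group memberships reproduces exactly $g(\cX_{a,b,n,s})$, so that the Hurwitz inequality collapses to an equality; once this is in place, the conclusion follows directly from the known order of ${\rm Aut}(\mathbb{F}_{q^{2n}}(y,z))$.
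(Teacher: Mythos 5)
Your proposal is correct and follows essentially the same route as the paper's own proof: the Hurwitz genus formula combined with Propositions \ref{contr1X} and \ref{contr2X} forces $i(\gamma)=2$ for all $\gamma\in\tilde{S}\setminus(S_{q^3}/E_{\bar q})$, the higher ramification filtration at $P_\infty$ together with Lemma \ref{highnormal} gives normality of $E_q/E_{\bar q}$ in $\tilde{S}$, and then $\tilde{S}/(E_q/E_{\bar q})$ is compared with the Sylow $p$-subgroup of ${\rm Aut}(\mathbb{F}_{q^{2n}}(y,z))$ via \cite[Theorem 3.2]{BMZsep}. The only cosmetic difference is that you quote the full order $\frac{m}{s}q^2(q^2-1)$ of ${\rm Aut}(\mathbb{F}_{q^{2n}}(y,z))$ (exactly as the paper itself does in Proposition \ref{stabY}), whereas the paper's proof of this lemma invokes only its $p$-part $q^2$; this is the same application of the same result.
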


\begin{proof}
The first step is to prove that $i(\sigma)=2$ for all $\sigma\in\tilde{S}\setminus( S_{q^3}/E_{\bar q})$.
Since $i(\sigma)\geq2$ for all $\sigma\in\tilde{S}$ and $i(\alpha)$ has been computed in Propositions \ref{contr1X}, \ref{contr2X} for all $\alpha\in S_{q^3}/E_{\bar{q}}$, the claim $i(\sigma)=2$ follows by direct computation from the Hurwitz genus formula, in analogy with the proof of Lemma \ref{normpsyl}.
Therefore $i(\alpha)\geq i(\beta)$ for any non-trivial $\alpha\in E_q/E_{\bar q}$ and $\beta\in {\rm Aut}(\cX_{a,b,n,s})_{P_{\infty}}$, so that $E_q/E_{\bar q}$ is the last non-trivial higher ramification group at $P_{\infty}$. By Lemma \ref{highnormal}, this implies that $E_q/E_{\bar q}$ is normal in $\tilde{S}$.
Therefore $\tilde{S}/(E_q/E_{\bar q})$ is an automorphism group of the fixed field of $E_q/E_{\bar q}$, which clearly coincides with $\mathbb{F}_{q^{2n}}(y,z)$ where $z^{m/s}=y^{q^2}-y$.
By \cite[Theorem 3.2]{BMZsep}, a Sylow $p$-subgroup of the automorphism group of $\mathbb{F}_{q^{2n}}(y,z)$ has order $q^2$. This is equal to the size of $(S_{q^3}/E_{\bar q})/(E_q/E_{\bar q})$, and the claim follows.
\end{proof}

Lemmas \ref{lem:complX} and \ref{lem:pX} complete the proof of the following result.

\begin{corollary}\label{cor:fixX}
The full stabilizer of $P_{\infty}$ in ${\rm Aut}(\cX_{a,b,n,s})$ is
$$G=(S_{q^3}/E_{\bar q})\rtimes C_{(q+1)(\bar{q}-1)m/s}.$$
\end{corollary}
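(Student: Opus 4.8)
The plan is to assemble the two structural lemmas just established into the single statement about the point stabilizer. First I would invoke Lemma \ref{highnormal}, which gives the semidirect decomposition ${\rm Aut}(\cX_{a,b,n,s})_{P_{\infty}}=\tilde{S}\rtimes\tilde{C}$, where $\tilde{S}$ is the unique (hence normal) Sylow $p$-subgroup of the stabilizer and $\tilde{C}$ is a cyclic group whose order is prime to $p$. This reduces the problem to identifying the two factors $\tilde{S}$ and $\tilde{C}$ separately, which is exactly what the preceding lemmas accomplish.

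Next I would pin down each factor. By Lemma \ref{lem:pX} the Sylow $p$-subgroup satisfies $\tilde{S}=S_{q^3}/E_{\bar{q}}$, and by Lemma \ref{lem:complX} the cyclic complement satisfies $\tilde{C}=C_{(q+1)(\bar{q}-1)m/s}$. In both cases the containments $S_{q^3}/E_{\bar{q}}\le\tilde{S}$ and, up to conjugation, $C_{(q+1)(\bar{q}-1)m/s}\le\tilde{C}$ were already available from the construction of $G$, so each lemma merely upgrades a containment to an equality. Since $G$ fixes $P_{\infty}$ by construction, we have $G\subseteq{\rm Aut}(\cX_{a,b,n,s})_{P_{\infty}}$; combining the two factor equalities with the decomposition of Lemma \ref{highnormal} then forces ${\rm Aut}(\cX_{a,b,n,s})_{P_{\infty}}=(S_{q^3}/E_{\bar{q}})\rtimes C_{(q+1)(\bar{q}-1)m/s}=G$.

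I do not expect a genuine obstacle at this stage: all the difficulty has already been absorbed into Lemmas \ref{lem:complX} and \ref{lem:pX} (resting in turn on the contribution computations of Propositions \ref{contr1X} and \ref{contr2X} and on the automorphism bounds of \cite{BMZsep}), and the corollary is only their synthesis. The one point I would be careful to record is that the semidirect product structure carries over faithfully: the complement $\tilde{C}$ normalizes $\tilde{S}$ automatically by Lemma \ref{highnormal}, and its conjugation action matches the one used to define $G$, so no separate verification of the group extension is required. Thus the argument is essentially a two-line deduction, and the substance of the theorem lies entirely in the two lemmas that precede it.
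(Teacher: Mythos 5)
Your proposal is correct and follows exactly the paper's route: the paper likewise obtains the corollary as the immediate synthesis of Lemma \ref{lem:pX} (giving $\tilde{S}=S_{q^3}/E_{\bar q}$) and Lemma \ref{lem:complX} (giving $\tilde{C}=C_{(q+1)(\bar{q}-1)m/s}$) with the decomposition ${\rm Aut}(\cX_{a,b,n,s})_{P_{\infty}}=\tilde{S}\rtimes\tilde{C}$ from Lemma \ref{highnormal}. The paper states this in one line ("Lemmas \ref{lem:complX} and \ref{lem:pX} complete the proof"), so your write-up is simply a more explicit rendering of the same argument.
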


Define $O=\left\{ 
P_{(\alpha,\beta,0)}\mid \beta\in\mathbb{F}_{q^2},\, c\beta^{q+1}=\mathrm{Tr}_{q/\bar{q}}(\alpha) \right\}$, and consider the set $O\cup\{P_{\infty}\}\subset\cX_{a,b,n,s}(\mathbb{F}_{q^{2n}})$, which is stabilized pointwise by $C_{m/s}$.

\begin{lemma}\label{lem:normX}
The pointwise stabilzer of $O\cup\{P_{\infty}\}$ in ${\rm Aut}(\cX_{a,b,n,s})$ is $C_{m/s}$.
\end{lemma}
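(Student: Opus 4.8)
The plan is to reduce everything to the explicitly known stabilizer $G$ and then read off its action on $O$. Since any automorphism fixing $O\cup\{P_{\infty}\}$ pointwise in particular fixes $P_{\infty}$, it lies in the full stabilizer of $P_{\infty}$, which is $G=(S_{q^3}/E_{\bar{q}})\rtimes C_{(q+1)(\bar{q}-1)m/s}$ by Corollary \ref{cor:fixX}. As $G$ already fixes $P_{\infty}$, the pointwise stabilizer in question is exactly $T:=\{\sigma\in G:\sigma \text{ fixes every point of }O\}$, and $C_{m/s}\subseteq T$ is clear because $C_{m/s}$ acts only on the coordinate $z$. It thus remains to show $T\subseteq C_{m/s}$.

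Two structural facts drive the argument. First, $T$ is a $p'$-group: since $|O\cup\{P_{\infty}\}|=q^3/\bar{q}+1\geq2$ while, by Corollary \ref{1actionp2}, every nontrivial $p$-element of $\aut(\cX_{a,b,n,s})$ fixes a single point, $T$ contains no nontrivial $p$-element; hence $T$ meets the normal Sylow $p$-subgroup $S_{q^3}/E_{\bar{q}}$ trivially and embeds into the cyclic complement. Second, $C_{m/s}$ is normal in $G$: the generators of $G$ act on the coordinates $x,y$, so $G$ preserves the subfield $\mathbb{F}_{q^{2n}}(x,y)$, and the kernel of the induced action of $G$ on $\cX_{a,b,n,s}/C_{m/s}$ is precisely the set of automorphisms fixing $x$ and $y$, i.e.\ the Galois group of the Kummer extension $\mathbb{F}_{q^{2n}}(x,y,z)/\mathbb{F}_{q^{2n}}(x,y)$, which is $C_{m/s}$; thus $C_{m/s}=\ker(G\to\aut(\cX_{a,b,n,s}/C_{m/s}))\triangleleft G$. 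I would also record that $G$ preserves $O$ setwise: the elements of $S_{q^3}/E_{\bar{q}}$ permute the affine points with $z=0$, and a generator of $C_{(q+1)(\bar{q}-1)m/s}$ sends $P_{(\alpha,\beta,0)}\mapsto P_{(e^{q+1}\alpha,e\beta,0)}$, which still satisfies the defining equation of $O$ because $e^{q+1}\in\mathbb{F}_{\bar{q}}$ commutes with $\mathrm{Tr}_{q/\bar{q}}$.

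With these in hand, take $\sigma\in T$. Being a $p'$-subgroup of the solvable group $G$, by Hall's theorem $\langle\sigma\rangle$ is contained in a conjugate $g\,C_{(q+1)(\bar{q}-1)m/s}\,g^{-1}$ of the Hall $p'$-subgroup, so $\sigma=g\sigma_0 g^{-1}$ with $\sigma_0\in C_{(q+1)(\bar{q}-1)m/s}$. Since $g^{-1}\in G$ preserves $O$ and $\sigma$ fixes $O$ pointwise, $\sigma_0$ fixes $g^{-1}(O)=O$ pointwise. Now I compute the pointwise stabilizer of $O$ inside $C_{(q+1)(\bar{q}-1)m/s}$: writing such an element as $(x,y,z)\mapsto(e^{q+1}x,ey,fz)$, it fixes $P_{(\alpha,\beta,0)}$ iff $e\beta=\beta$ and $e^{q+1}\alpha=\alpha$; choosing a point of $O$ with $\beta\neq0$ (which exists) forces $e=1$, so the element is $(x,y,z)\mapsto(x,y,fz)$ with $f^{m/s}=1$, i.e.\ it lies in $C_{m/s}$. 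Hence $\sigma_0\in C_{m/s}$, and since $C_{m/s}\triangleleft G$ we get $\sigma=g\sigma_0 g^{-1}\in C_{m/s}$. Therefore $T\subseteq C_{m/s}$, and combined with $C_{m/s}\subseteq T$ this yields $T=C_{m/s}$.

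I expect the main obstacle to be the semidirect-product subtlety in the last step: a priori a $p'$-automorphism fixing all of $O$ need not lie in the chosen complement $C_{(q+1)(\bar{q}-1)m/s}$, but only in a conjugate of it. The two ingredients that make this conjugation harmless are the normality of $C_{m/s}$ in $G$ and the $G$-invariance of $O$; establishing the former through the Kummer description of $\mathbb{F}_{q^{2n}}(x,y,z)/\mathbb{F}_{q^{2n}}(x,y)$ is the technically delicate point, whereas the action of the complement on $O$ and the resulting constraint $e=1$ is a routine coordinate computation.
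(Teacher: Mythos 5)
Your proof is correct, but it takes a genuinely different route from the paper's. The paper argues directly with divisors: since the supports of the principal divisors \eqref{div x}, \eqref{div y}, \eqref{div z} are contained in $O\cup\{P_{\infty}\}$, any automorphism fixing this set pointwise preserves the divisors of the coordinate functions $x,y,z$, hence acts diagonally as $(x,y,z)\mapsto(\lambda x,\mu y,\rho z)$; substituting into the defining equations of $\cX_{a,b,n,s}$ and using the fixed points then forces $\lambda=\mu=1$ and $\rho^{m/s}=1$. That argument is a few lines, needs neither Corollary \ref{cor:fixX} nor Corollary \ref{1actionp2}, and applies to an arbitrary automorphism without first locating it inside a known subgroup. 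You instead reduce to the known stabilizer $G$ via Corollary \ref{cor:fixX}, show the pointwise stabilizer is a $p'$-group via Corollary \ref{1actionp2}, invoke Hall's theorem in the solvable group $G$ to place a given element inside a conjugate of the complement $C_{(q+1)(\bar{q}-1)m/s}$, and neutralize the conjugation with two facts you establish along the way (normality of $C_{m/s}$ in $G$, and $G$-invariance of $O$); the closing coordinate computation in the complement, where a point of $O$ with $\beta\ne 0$ forces $e=1$, is essentially the same pinning-down of constants that the paper compresses into ``direct checking with the equations'' (indeed, the equations alone only give $\lambda=\mu^{q+1}\in\mathbb{F}_{\bar{q}}$, $\mu\in\mathbb{F}_{q^2}$, $\rho^{m/s}=\mu$, so the fixed points of $O$ are needed in both arguments). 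What your route buys is that all explicit computation is confined to the presented group $G$, with the divisor theory replaced by group theory; what it costs is dependence on heavier machinery (solvability, Hall conjugacy, semiregularity of $p$-elements) and on the previously proved structure results, whereas the paper's divisor argument is short and self-contained.
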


\begin{proof}
If $\alpha$ stabilizes $O\cup\{P_{\infty}\}$ pointwise, then $\alpha$ preserves the principal divisors of the coordinate functions $x,y,z$. Thus $\alpha:(x,y,z)\mapsto(\lambda x,\mu y, \rho z)$ for some $\lambda,\mu,\rho\in\mathbb{F}_{q^{2n}}$.
By direct checking with the equations of $\cX_{a,b,n,s}$, this implies $\lambda=\mu=1$ and $\rho^{m/s}=1$, that is $\alpha\in C_{m/s}$. 
\end{proof}

\begin{corollary}\label{cor:actX}
If ${\rm Aut}(\cX_{a,b,n,s})$ acts on $O\cup\{P_{\infty}\}$, then ${\rm Aut}(\cX_{a,b,n,s})={\rm Aut}(\cX_{a,b,n,s})_{P_{\infty}}$.
\end{corollary}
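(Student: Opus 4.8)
The plan is to reduce the statement to a property of the quotient curve $\cX_{a,b,n,s}/C_{m/s}$ and then invoke the known structure of its automorphism group. Throughout I assume $b<a$, as allowed by the Remark following Theorem \ref{mainX}. Write $A:={\rm Aut}(\cX_{a,b,n,s})$ and suppose that $A$ acts on $O\cup\{P_\infty\}$. This action gives a homomorphism $A\to{\rm Sym}(O\cup\{P_\infty\})$ whose kernel is precisely the pointwise stabilizer of $O\cup\{P_\infty\}$, which equals $C_{m/s}$ by Lemma \ref{lem:normX}. Hence $C_{m/s}$ is normal in $A$, and $A/C_{m/s}$ acts faithfully on the fixed field $\mathbb{F}_{q^{2n}}(x,y)$ of $C_{m/s}$; that is, $A/C_{m/s}$ is an automorphism group of the curve $\bar{\cX}:=\cX_{a,b,n,s}/C_{m/s}$ with affine equation $cy^{q+1}=\mathrm{Tr}_{q/\bar{q}}(x)$. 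By Corollary \ref{cor:fixX} it contains $G/C_{m/s}$, which fixes the place $\bar{P}_\infty$ of $\mathbb{F}_{q^{2n}}(x,y)$ lying below $P_\infty$.

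The heart of the argument is to show that $A/C_{m/s}$ fixes $\bar{P}_\infty$. Since $b<a$, the additive polynomial $\mathrm{Tr}_{q/\bar{q}}(x)$ has degree $q/\bar{q}>1$, so $\bar{\cX}$ is a proper quotient of the Hermitian curve $\mathcal{H}_q$ and is not itself Hermitian. The key input is that, by \cite[Theorem 3.3]{BMZsep}, the full automorphism group of $\bar{\cX}$ fixes $\bar{P}_\infty$; in particular $A/C_{m/s}$ fixes $\bar{P}_\infty$. Should one prefer to argue directly, one notes that $O$ is a single orbit of $G\le A$, so if $A$ moved $P_\infty$ then $A\cdot P_\infty=O\cup\{P_\infty\}$ and $A/C_{m/s}$ would be transitive on the $q^3/\bar{q}+1$ places $\bar{O}\cup\{\bar{P}_\infty\}$ with point stabilizer $G/C_{m/s}$ of order $\frac{q^3}{\bar{q}}(q+1)(\bar{q}-1)$; comparing $|A/C_{m/s}|$ with the order of ${\rm Aut}(\bar{\cX})$ given in \cite[Theorem 3.3]{BMZsep} then yields a contradiction.

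Finally, $C_{m/s}$ fixes $P_\infty$ and has order $m/s=[\mathbb{F}_{q^{2n}}(x,y,z):\mathbb{F}_{q^{2n}}(x,y)]$, so $P_\infty$ is totally ramified in the cover $\cX_{a,b,n,s}\to\bar{\cX}$ and is the unique place above $\bar{P}_\infty$. Since $A/C_{m/s}$ fixes $\bar{P}_\infty$, the group $A$ fixes its unique preimage $P_\infty$; that is, $A={\rm Aut}(\cX_{a,b,n,s})_{P_\infty}$, which by Corollary \ref{cor:fixX} equals $G$.

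The main obstacle is the step asserting that ${\rm Aut}(\bar{\cX})$ fixes $\bar{P}_\infty$. This is exactly the structural feature that separates $\cX_{a,b,n,s}$ from $\cY_{n,s}$: for $\cY_{n,s}$ the corresponding quotient is the full Hermitian curve, whose automorphism group ${\rm PGU}(3,q)$ is $2$-transitive and can move the infinite place (this is what produces the ${\rm PGU}(3,q)$-lift in Lemma \ref{3divnlift}), whereas for $\bar{\cX}$ with $b<a$ the trace relation $cy^{q+1}=\mathrm{Tr}_{q/\bar{q}}(x)$ breaks this symmetry and pins down $\bar{P}_\infty$.
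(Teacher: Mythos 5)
Your proof is correct and follows essentially the same route as the paper: the hypothesis together with Lemma \ref{lem:normX} exhibits $C_{m/s}$ as the kernel of the action on $O\cup\{P_\infty\}$, hence normal; the quotient group then acts on the fixed field $\mathbb{F}_{q^{2n}}(x,y)$ and is forced to fix $\bar{P}_\infty$ by a structural result on separated-polynomial curves; and total ramification of $P_\infty$ over $\bar{P}_\infty$ pulls this back to ${\rm Aut}(\cX_{a,b,n,s})$. The only difference is bibliographic: for the key fact that every automorphism of the quotient curve $cy^{q+1}=\mathrm{Tr}_{q/\bar{q}}(x)$ fixes its point at infinity, the paper cites \cite[Theorem 12.11]{HKT} (see also \cite[Lemma 2.3]{BMZsep}) rather than \cite[Theorem 3.3]{BMZsep}.
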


\begin{proof}
By Lemma \ref{lem:normX}, $C_{m/s}$ is normal in ${\rm Aut}(\cX_{a,b,n,s})$. Then ${\rm Aut}(\cX_{a,b,n,s})/C_{m/s}$ is an automorphism group of the fixed field $\mathbb{F}_{q^{2n}}(x,y)$ of $C_{m/s}$. By \cite[Theorem 12.11]{HKT} (see also \cite[Lemma 2.3]{BMZsep}), ${\rm Aut}(\cX_{a,b,n,s})/C_{m/s}$ stabilizes the unique point at infinity of $\mathbb{F}_{q^{2n}}(x,y)$, which is totally ramified under $P_{\infty}$. Since $C_{m/s}$ fixes $P_{\infty}$, this implies that ${\rm Aut}(\cX_{a,b,n,s})$ fixes $P_{\infty}$.
\end{proof}

By Corollaries \ref{cor:fixX} and \ref{cor:actX}, Theorem \ref{mainX} is proved once we show that ${\rm Aut}(\cX_{a,b,n,s})$ acts on $O\cup\{P_{\infty}\}$.
By contradiction, assume from now on that this is not the case.

Suppose first that no point in $\cX_{a,b,n,s}\setminus(O\cup\{P_{\infty}\})$ is in the same orbit $O_{\infty}$ of $P_{\infty}$ under ${\rm Aut}(\cX_{a,b,n,s})$.
Since ${\rm Aut}(\cX_{a,b,n,s})$ does not act on $O\cup\{P_{\infty}\}$, there exist two points $P\in\cX_{a,b,n,s}\setminus(O\cup\{P_{\infty}\})$ and $Q\in O$ lying in the same orbit $O_P$. Then $O\subseteq O_P$, because $O$ is an orbit under $G$. As $P_\infty\notin O_P$, this implies that $O_{\infty}=\{P_\infty\}$; in this case, the claim follows from Corollary \ref{cor:fixX}.

Therefore there exists $P\in\cX_{a,b,n,s}\setminus(O\cup\{P_{\infty}\})$ with $P\in O_{\infty}$.
Also, $P$ is $\mathbb{F}_{q^{2n}}$-rational, because the automorphism group of the $\mathbb{F}_{q^{2n}}$-maximal curve $\cX_{a,b,n,s}$ is defined over $\mathbb{F}_{q^{2n}}$, and $P$ lies in the same orbit of the $\mathbb{F}_{q^{2n}}$-rational point $P_{\infty}$.

\begin{lemma}
The short orbit $O_{\infty}$ is the only non-tame orbit of ${\rm Aut}(\cX_{a,b,n,s})$.
\end{lemma}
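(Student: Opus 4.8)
The plan is to mirror the argument of Lemma \ref{onenontame}, exploiting the fact that on an $\mathbb{F}_{q^{2n}}$-maximal curve every $p$-subgroup of the automorphism group has a single fixed point. Since $\cX_{a,b,n,s}$ is $\mathbb{F}_{q^{2n}}$-maximal it has zero $p$-rank, so Corollary \ref{1actionp2} applies: any $p$-subgroup of ${\rm Aut}(\cX_{a,b,n,s})$ fixes exactly one $\mathbb{F}_{q^{2n}}$-rational point and acts semiregularly on the remaining ones. As we are in the case $b<a$, the quotient $S_{q^3}/E_{\bar q}$ has order $q^3/\bar q>1$ and is a nontrivial $p$-group; hence $p$ divides $|G|$, and $O_{\infty}$ is indeed a non-tame short orbit.

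The first step is to show that $S_{q^3}/E_{\bar q}$ is a Sylow $p$-subgroup of the whole group ${\rm Aut}(\cX_{a,b,n,s})$. I would take a Sylow $p$-subgroup $S$ containing $S_{q^3}/E_{\bar q}$; by Corollary \ref{1actionp2} it fixes a unique point, which must be $P_{\infty}$ since $S_{q^3}/E_{\bar q}$ already fixes $P_{\infty}$. Then $S$ lies in the stabilizer of $P_{\infty}$, which equals $G$ by Corollary \ref{cor:fixX}, forcing $S=S_{q^3}/E_{\bar q}$.

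Next I would take an arbitrary non-tame short orbit $O'$, pick $P'\in O'$, and write the stabilizer as ${\rm Aut}(\cX_{a,b,n,s})_{P'}=S'\rtimes C'$ with $S'$ its Sylow $p$-subgroup (nontrivial, since $O'$ is non-tame), as in Lemma \ref{highnormal}. The same one-fixed-point reasoning shows $S'$ is a Sylow $p$-subgroup of the entire group: any Sylow $p$-subgroup $\bar{S}$ of ${\rm Aut}(\cX_{a,b,n,s})$ containing $S'$ fixes a unique point, which must be $P'$ because the unique fixed point of the $p$-group $S'$ is $P'$; hence $\bar{S}$ fixes $P'$, lies in ${\rm Aut}(\cX_{a,b,n,s})_{P'}$, and therefore $\bar{S}=S'$.

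Finally, since $S_{q^3}/E_{\bar q}$ and $S'$ are conjugate Sylow $p$-subgroups, say $\alpha^{-1}S'\alpha=S_{q^3}/E_{\bar q}$ with $\alpha\in{\rm Aut}(\cX_{a,b,n,s})$, conjugation carries the unique fixed point of $S'$ to that of $S_{q^3}/E_{\bar q}$, giving $\alpha(P_{\infty})=P'$ and hence $O'=O_{\infty}$. The only delicate point is the bookkeeping that each relevant $p$-subgroup is a \emph{full} Sylow $p$-subgroup of ${\rm Aut}(\cX_{a,b,n,s})$; this rests entirely on Corollary \ref{cor:fixX} (that $G$ is the full stabilizer of $P_{\infty}$) together with the single-fixed-point property of Corollary \ref{1actionp2}, and once these are in place the conclusion is immediate.
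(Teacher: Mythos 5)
Your proof is correct and follows exactly the strategy the paper intends: the paper's own proof of this lemma is just a one-line remark that the argument of Lemma \ref{onenontame} carries over, relying on the conjugacy of Sylow $p$-subgroups and the fact that each nontrivial $p$-subgroup of the automorphism group of a maximal (hence zero $p$-rank) curve fixes exactly one point. You have simply written out in full the details that the paper leaves implicit, including the key step that Corollary \ref{cor:fixX} forces $S_{q^3}/E_{\bar q}$ to be a full Sylow $p$-subgroup of ${\rm Aut}(\cX_{a,b,n,s})$.
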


\begin{proof}
The proof is analogous to the one of Lemma \ref{onenontame}. It relies on the fact that all Sylow $p$-subgroups are conjugate, together with the correspondence between a point in a non-tame orbit and the Sylow $p$-subgroup made by the $p$-elements fixing that point.
\end{proof}

Write
\begin{equation}\label{eq:sizeX}|O_{\infty}|=1+\ell|O|+k|{\rm Aut}(\cX_{a,b,n,s})_{P_{\infty}}|=1+i\frac{q^3}{\bar q}+k\frac{q^3}{\bar q}(q+1)(\bar{q}-1)\frac{m}{s},\end{equation}
where $\ell$ is $1$ or $0$ according to $O\subset\cO_{\infty}$ or $O\not\subset O_{\infty}$, and $k\geq1$ is the number of long orbits of ${\rm Aut}(\cX_{a,b,n,s})_{P_{\infty}}$ contained in $O_{\infty}$.
Since $k\geq1$, the orbit-stabilizer theorem yields
$$
|{\rm Aut}(\cX_{a,b,n,s})|=|O_{\infty}|\cdot|G|>|G|^2>84(g-1).
$$
Then, by Theorem \ref{Hurwitz}, one of the following cases holds for the short orbits of ${\rm Aut}(\cX_{a,b,n,s})$:
\begin{itemize}
    \item[(A)] exactly one short orbit $O_{\infty}$, non-tame, of length dividing $2g(\cX_{a,b,n,s})-2$;
    \item[(B)] exactly one non-tame orbit $O_{\infty}$ and two tame orbits, both of length ${|{\rm Aut}(\cX_{a,b,n,s})|}{2}$, with $p\geq3$;
    \item[(C)] exactly one non-tame orbit $O_{\infty}$ and one tame orbit.
\end{itemize}
The case (A) cannot occur, because $k\geq1$ implies $|O_{\infty}|>2g(\cX_{a,b,n,s})-2$.
In the next lemmas we find a contradiction also to the cases (B) and (C).

\begin{lemma}
The case (B) does not occur.
\end{lemma}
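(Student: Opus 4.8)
The plan is to mirror the proof of Lemma~\ref{numborbits}, replacing the data of $\cY_{n,s}$ by their $\cX_{a,b,n,s}$-analogues from Propositions~\ref{contr1X} and~\ref{contr2X}. Assume Case (B) holds: besides the non-tame orbit $O_\infty$ there are two tame orbits $O_1,O_2$, each of length $|{\rm Aut}(\cX_{a,b,n,s})|/2$, so that each point of $O_1\cup O_2$ has stabilizer of order $2$ and $p\geq 3$. Recall from Corollary~\ref{cor:fixX} that the stabilizer of $P_\infty$ is $G$, whence $|O_\infty|=|{\rm Aut}(\cX_{a,b,n,s})|/|G|$ by the orbit-stabilizer theorem.

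First I would show $O\subseteq O_\infty$. The set $O$ is a single $G$-orbit of length $q^3/\bar q$, and $G$ acts semiregularly outside $O\cup\{P_\infty\}$, so every $G$-orbit other than $\{P_\infty\}$ and $O$ has length $|G|$. If $O$ were contained in $O_1$, then $O_1$ would be a union of $G$-orbits consisting of $O$ together with long orbits, giving $|O_1|\equiv q^3/\bar q\pmod{|G|}$; on the other hand $O_2$ is disjoint from $O\cup\{P_\infty\}$, hence a union of long $G$-orbits, so $|O_2|\equiv 0\pmod{|G|}$. Since $0<q^3/\bar q<|G|$ (because $(q+1)(\bar q-1)m/s\geq3$), this contradicts $|O_1|=|O_2|$. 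The same argument excludes $O\subseteq O_2$, forcing $O\subseteq O_\infty$, i.e. $\ell=1$ in~\eqref{eq:sizeX}.

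Next I would compute the different at $P_\infty$ from the ramification filtration. By Lemma~\ref{highnormal} and Lemma~\ref{lem:pX} one has $G^{(0)}=G$ and $G^{(1)}=S_{q^3}/E_{\bar q}$, and the drops are dictated by the values $i(\alpha)=m/s+1$ and $i(\alpha)=(q^n+1)/s+1$ of Propositions~\ref{contr1X} and~\ref{contr2X}, so that $G^{(1)}=\cdots=G^{(m/s)}=S_{q^3}/E_{\bar q}$ and $G^{(m/s+1)}=\cdots=G^{((q^n+1)/s)}=E_q/E_{\bar q}$. Writing $D:=\sum_{j\geq1}(|G^{(j)}|-1)=\tfrac{m}{s}\bigl(\tfrac{q^3}{\bar q}-1\bigr)+\tfrac{qm}{s}\bigl(\tfrac{q}{\bar q}-1\bigr)=\tfrac{(q^n+1)(q^2-\bar q)}{s\bar q}$, we get $d_{P_\infty}=(|G|-1)+D$. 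Since $|{\rm Aut}(\cX_{a,b,n,s})|>84(g-1)$, the quotient by the full group is rational by Theorem~\ref{Hurwitz}; the two tame orbits contribute $|{\rm Aut}(\cX_{a,b,n,s})|$ in total, and using $|{\rm Aut}(\cX_{a,b,n,s})|=|O_\infty|\,|G|$ the Hurwitz genus formula collapses to
$$2g(\cX_{a,b,n,s})-2=|O_\infty|\,(d_{P_\infty}-|G|)=|O_\infty|\,(D-1).$$

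Finally I would compare this with the direct value of the genus: substituting the stated $g(\cX_{a,b,n,s})$ and simplifying gives $2g(\cX_{a,b,n,s})-2=(D-1)-q^3/\bar q$ (equivalently, this is the Hurwitz formula for the rational covering $\cX_{a,b,n,s}\to\cX_{a,b,n,s}/G$, whose only short orbits are $\{P_\infty\}$ and $O$). Plugging $|O_\infty|=1+q^3/\bar q+k|G|$ from~\eqref{eq:sizeX} into the previous display and cancelling the common factor $(D-1)$ leaves
$$-\,\frac{q^3}{\bar q}=\Big(\frac{q^3}{\bar q}+k|G|\Big)(D-1),$$
whose left-hand side is negative while the right-hand side is positive, since $D>1$ and $k\geq1$. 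This contradiction rules out Case (B). I expect the only delicate points to be the bookkeeping of the ramification filtration (forced, however, by the two propositions) and verifying that the $|G|$-terms cancel cleanly; once the closed form of $D$ and the identity $2g-2=(D-1)-q^3/\bar q$ are in hand, the contradiction is immediate.
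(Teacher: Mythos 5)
Your proof is correct and takes essentially the same route as the paper: apply the Hurwitz genus formula to the full automorphism group under case (B), insert the different at $P_\infty$ obtained from Propositions \ref{contr1X} and \ref{contr2X}, use the orbit-stabilizer theorem, and extract an arithmetic contradiction from $k\geq 1$. The only deviation is that you first pin down $\ell=1$ (i.e. $O\subseteq O_\infty$) by the congruence argument borrowed from Lemma \ref{numborbits}; the paper skips this step, keeping $\ell\in\{0,1\}$ arbitrary, since the same contradiction falls out regardless of $\ell$.
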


\begin{proof}
Suppose that the case (B) occurs. In analogy with Lemma \ref{numborbits}, we apply the Hurwitz genus formula to ${\rm Aut}(\cX_{a,b,n,s})$. By Lemmas \ref{contr1X} and \ref{contr2X}, we obtain
$$
2g(\cX_{a,b,n,s})-2=-2|{\rm Aut}(\cX_{a,b,n,s})|$$
$$+2\frac{|{\rm Aut}(\cX_{a,b,n,s})|}{2}(2-1)+|O_{\infty}|\left(|G|-1+\left(\frac{q}{\bar{q}}-1\right)(q+1)\frac{m}{s}+\left(\frac{q^3}{\bar{q}}-\frac{q}{\bar{q}}\right)\frac{m}{s}\right).
$$
Since $|O_{\infty}|\cdot|G|=|{\rm Aut}(\cX_{a,b,n,s})|$, we get
$$
\left(\frac{q^2}{\bar{q}}-1\right)(q+1)\frac{m}{s}-\left(\frac{q^3}{\bar{q}}+1\right)=\left(1+\ell\frac{q^3}{\bar{q}}+k|G|\right)\cdot\left(\left(\frac{q^2}{\bar{q}}-1\right)(q+1)\frac{m}{s}-1\right).
$$
Since $k\geq1$, this is a contradiction.
\end{proof}

\begin{lemma}
The case (C) does not occur.
\end{lemma}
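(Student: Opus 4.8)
The plan is to carry out, for $\cX_{a,b,n,s}$, the analysis of Proposition \ref{normalizzano}. Since $k\geq1$ forces $|{\rm Aut}(\cX_{a,b,n,s})|>|G|^2>84(g-1)$, Theorem \ref{Hurwitz} guarantees that $\cX_{a,b,n,s}/{\rm Aut}(\cX_{a,b,n,s})$ is rational, so the Hurwitz genus formula reads
$$2g(\cX_{a,b,n,s})-2=-2|{\rm Aut}(\cX_{a,b,n,s})|+|O_{\infty}|\,d_{P_{\infty}}+|O_1|\,d_{P_1}.$$
By Propositions \ref{contr1X} and \ref{contr2X} the different exponent at $P_{\infty}$ is $d_{P_{\infty}}=(|G|-1)+D$, where
$$D:=\left(\frac{q}{\bar q}-1\right)(q+1)\frac{m}{s}+\left(\frac{q^3}{\bar q}-\frac{q}{\bar q}\right)\frac{m}{s}=\frac{(q^n+1)(q^2-\bar q)}{s\bar q},$$
while $d_{P_1}=|{\rm Aut}(\cX_{a,b,n,s})_{P_1}|-1$ by tameness of $O_1$. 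Using the orbit--stabilizer identities $|O_{\infty}|\,|G|=|O_1|\,|{\rm Aut}(\cX_{a,b,n,s})_{P_1}|=|{\rm Aut}(\cX_{a,b,n,s})|$ (recall $G={\rm Aut}(\cX_{a,b,n,s})_{P_{\infty}}$ by Corollary \ref{cor:fixX}), this simplifies to the master relation $2g(\cX_{a,b,n,s})-2=|O_{\infty}|(D-1)-|O_1|$; a direct check gives $2g(\cX_{a,b,n,s})-2=D-(q^3/\bar q+1)$, so in particular $D-1>q^3/\bar q$. I would now substitute \eqref{eq:sizeX} and treat $\ell=0$ and $\ell=1$ separately, writing $L:=(q+1)(\bar q-1)m/s$, so that $|G|=(q^3/\bar q)L$.

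In the case $\ell=0$ one has $O\subseteq O_1$, so the stabilizer in ${\rm Aut}(\cX_{a,b,n,s})$ of a point of $O_1$ is conjugate to that of a point $Q\in O$, which contains the order-$L$ cyclic group $C_{(q+1)(\bar q-1)m/s}$ fixing $Q$; hence $|{\rm Aut}(\cX_{a,b,n,s})_{P_1}|\geq L$. Writing $|O_{\infty}|=1+k|G|$ and $|O_1|=q^3/\bar q+k_1|G|$ and feeding these into the master relation forces $k_1=k(D-1)$. The orbit--stabilizer theorem then yields
$$|{\rm Aut}(\cX_{a,b,n,s})_{P_1}|=\frac{(1+k|G|)|G|}{q^3/\bar q+k(D-1)|G|},$$
and since $Lq^3/\bar q=|G|$ and $D-1>q^3/\bar q$ one checks at once that this quantity is strictly smaller than $L$, contradicting $|{\rm Aut}(\cX_{a,b,n,s})_{P_1}|\geq L$. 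I would prefer this argument to the crude $|{\rm Aut}|>8g^3$ comparison used for $\cY_{n,s}$, which is delicate here because of the extra parameter $\bar q$.

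In the case $\ell=1$ one has $O\subseteq O_{\infty}$, so $O_1$ is a union of $k_1\geq1$ long orbits, $|O_1|=k_1|G|$, and $|{\rm Aut}(\cX_{a,b,n,s})_{P_1}|=|O_{\infty}|/k_1$. The master relation now gives $k_1=R+k(D-1)$ with $R:=\frac{q^2-\bar q}{\bar q(\bar q-1)}$, which is an integer since $b\mid a$ makes $q$ a power of $\bar q$. Hence
$$|{\rm Aut}(\cX_{a,b,n,s})_{P_1}|=\frac{1+q^3/\bar q+k|G|}{R+k(D-1)},$$
and the task is to show that this is never an integer for $k\geq1$. As in the final step of Proposition \ref{normalizzano}, I would determine its integer part $Q_0$ by polynomial division in $k$ and prove that the remaining fraction is nonzero and of absolute value less than $1$, so that $|{\rm Aut}(\cX_{a,b,n,s})_{P_1}|$ lies strictly between two consecutive integers. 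I expect this non-integrality check to be the main obstacle: unlike the $\ell=0$ case, there is no a priori lower bound $L$ on $|{\rm Aut}(\cX_{a,b,n,s})_{P_1}|$ to exploit, and since numerator and denominator are both linear in $k$ with coefficients mixing $q$, $\bar q=p^{b}$, $m$ and $s$, controlling the sign and size of the remainder is computationally heavier than for the one-parameter curve $\cY_{n,s}$ and uses the hypothesis $b<a$. Once both $\ell=0$ and $\ell=1$ yield contradictions, case (C) is excluded; together with the exclusion of (A) and (B) this shows that ${\rm Aut}(\cX_{a,b,n,s})$ acts on $O\cup\{P_{\infty}\}$, whence Theorem \ref{mainX} follows by Corollaries \ref{cor:fixX} and \ref{cor:actX}.
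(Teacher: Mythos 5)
Your reduction to the master relation $2g(\cX_{a,b,n,s})-2=|O_{\infty}|(D-1)-|O_1|$, the identity $2g(\cX_{a,b,n,s})-2=D-(q^3/\bar{q}+1)$, and the resulting value $k_1=k(D-1)+\ell R$ with $R=\frac{q^2/\bar{q}-1}{\bar{q}-1}$ all agree with the paper's Equations \eqref{eq:k1X} and \eqref{eq:OSTX}. Your treatment of $\ell=0$ is complete and is a genuine improvement on the paper's: instead of writing $|{\rm Aut}(\cX_{a,b,n,s})_{P_1}|=hL$ with $L:=(q+1)(\bar{q}-1)\frac{m}{s}$, extracting a congruence on $h$, and disposing of $h>1$ via the $8g^3$ bound, you observe that orbit--stabilizer together with $D-1-q^3/\bar{q}=2g-2>0$ forces $|{\rm Aut}(\cX_{a,b,n,s})_{P_1}|<L$, contradicting the lower bound $|{\rm Aut}(\cX_{a,b,n,s})_{P_1}|\geq L$ coming from $O\subseteq O_1$. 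That argument is correct and shorter than the paper's.

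The gap is the case $\ell=1$. What you wrote there is a plan, not a proof: you derive the correct expression $|{\rm Aut}(\cX_{a,b,n,s})_{P_1}|=\frac{1+q^3/\bar{q}+k|G|}{R+k(D-1)}$ and then say you \emph{would} compute its integer part and bound the remainder, explicitly flagging this as the main obstacle. That computation is precisely the substance of the paper's proof in this case, and nothing you established implies it. To close it: with $|G|=\frac{q^3}{\bar{q}}L$ and $D=RL$, division in $k$ gives
$$\frac{1+q^3/\bar{q}+k|G|}{R+k(D-1)}=q\bar{q}-q+\frac{k\left(qL+q\bar{q}-q\right)+q+1}{k(RL-1)+R},$$
and one must show the last fraction lies strictly in $(0,1)$. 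Positivity is clear; the upper bound amounts to $k\left[(R-q)L-(q\bar{q}-q)-1\right]>q+1-R$. This is where the hypothesis $b<a$ does real work, as you suspected: $b\mid a$ and $b<a$ give $\bar{q}^2\leq q$, hence $R-q=\frac{q(q-\bar{q}^2)+\bar{q}(q-1)}{\bar{q}(\bar{q}-1)}>0$, so the integer $R$ satisfies $R\geq q+1$ and the right-hand side is $\leq 0$; meanwhile $(R-q)L\geq\frac{q-1}{\bar{q}-1}L=(q^2-1)\frac{m}{s}>q\bar{q}-q+1$ makes the left-hand side positive. Hence $|{\rm Aut}(\cX_{a,b,n,s})_{P_1}|$ is not an integer, the desired contradiction. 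One caution if you carry this out by comparing with the paper: the remainder printed in the paper's Equation \eqref{eq:noninteroX} does not match what the division actually produces (rederive it as above); but the paper's argument pattern --- a positive denominator dominating a nonzero numerator congruent to $1$ modulo $q$ --- is exactly the right one, and it is this verification, absent from your proposal, that excludes case (C).
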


\begin{proof}
    The proof is analogous to the one of Proposition \ref{normalizzano}.
    Suppose that the case (C) occurs, and let $O_1$ be the tame short orbit. Then $O$ is contained in $O_{\infty}$ or $\cO_1$ according to $\ell=1$ or $\ell=0$. Thus, $O_1$ has size $(1-\ell)|O|+k_1|G|$, where $k_1\geq0$ is the number of long orbits of $G$ contained in $O_1$.
    By the Hurwitz genus formula we obtain
    $$
    2g(\cX_{a,b,n,s})-2=$$
    $$-2|{\rm Aut}(\cX_{a,b,n,s})|+|O_{\infty}|\left(|G|-1+\left(\frac{q^2}{\bar{q}}-1\right)(q+1)\frac{m}{s}\right)+|O_1|\left(\frac{|{\rm Aut}(\cX_{a,b,n,s})|}{|O_1|}-1\right),
    $$
    which yields
    \begin{equation}\label{eq:k1X}
    k_1=k\left(\left(\frac{q^2}{\bar{q}}-1\right)(q+1)\frac{m}{s}-1\right)+\ell\left(\frac{q^2/\bar{q}-1}{\bar{q}-1}\right).
    \end{equation}
    By the orbit-stabilizer theorem,
    \begin{equation}\label{eq:OSTX}
    |{\rm Aut}(\cX_{a,b,n,s})_{P_1}|\cdot|O_1|=|G|\cdot|O_{\infty}|,
    \end{equation}
    where $P_1$ is a point of $O_1$. We analyze the cases $\ell=0$ and $\ell=1$ separately.
    \begin{itemize}
        \item Let $\ell=0$. Then $O\subseteq O_1$ and ${\rm Aut}(\cX_{a,b,n,s})_{P_1}$ has a subgroup conjugate to $C_{(q+1)(\bar{q}-1)m/s}$. We can then write $|{\rm Aut}(\cX_{a,b,n,s})_{P_1}|=h(q+1)(\bar{q}-1)m/s$ for some $h\geq1$.
        By direct computation with Equations \eqref{eq:k1X} and \eqref{eq:OSTX}, we get
        \begin{equation}\label{eq:accaX}
        h\left(1+k(q+1)(\bar{q}-1)\frac{m}{s}\left(\left(\frac{q^2}{\bar{q}}-1\right)(q+1)\frac{m}{s}-1\right)\right)=1+k(q+1)(\bar{q}-1)\frac{m}{s}.
        \end{equation}
        In particular, $h\equiv1\pmod{(q+1)(\bar{q}-1)\frac{m}{s}}$.

        Suppose that $h>1$. Then $h>(q+1)(\bar{q}-1)\frac{m}{s}$, so that
        $$
        |{\rm Aut}(\cX_{a,b,n,s})_{P_1}|>\left((q+1)(\bar{q}-1)\frac{m}{s}\right)^2
        $$
        and hence
        $$
        |{\rm Aut}(\cX_{a,b,n,s})|=|{\rm Aut}(\cX_{a,b,n,s})_{P_1}|\cdot|O_1|>$$
        $$\left((q+1)(\bar{q}-1)\frac{m}{s}\right)^2\cdot\left(\frac{q^3}{\bar{q}}+k\left(\left(\frac{q^2}{\bar{q}}-1\right)(q+1)\frac{m}{s}-1\right)\frac{q^3}{\bar{q}}(q+1)(\bar{q}-1)\frac{m}{s}\right).
        $$
        Since $k>0$, this implies
        $$
        |{\rm Aut}(\cX_{a,b,n,s})|>8g^3,
        $$
        a contradiction to Theorem \ref{Hurwitz}. Therefore $h=1$. Now Equation \eqref{eq:accaX}, together with $h=1$ and $k\geq1$, provides a contradiction.
        \item Let $\ell=1$. By direct computation using Equations \eqref{eq:k1X} and \eqref{eq:OSTX}, one gets
        \begin{equation}\label{eq:noninteroX}
        |{\rm Aut}(\cX_{a,b,n,s})_{P_1}|=q\bar{q}-q+\frac{ kq(q+1)(\bar{q}-1)\frac{m}{s}-\left(\frac{q^3}{\bar{q}}-q\bar{q}-1\right) }{ k\left((q+1)\left(\frac{q^2}{\bar{q}}-1\right)\frac{m}{s}-1\right)+\frac{q^2/\bar{q}-1}{\bar{q}-1} }.
        \end{equation}
        Consider the fraction on the right-hand side of Equation \eqref{eq:noninteroX}, and recall $k\geq1$. The denominator is positive and greater than the absolute value of the numerator. Also, the numerator is non-zero, being congruent to $1$ modulo $q$. Thus, the right-hand side of Equation \eqref{eq:noninteroX} is not an integer, a contradiction. 
    \end{itemize}
\end{proof}
The proof of Theorem \ref{mainX} is now complete.

\section*{Acknowledgments}
The first author would like to acknowledge the support from The Danish Council for Independent Research (DFF-FNU) for the project \emph{Correcting on a Curve}, Grant No.~8021-00030B.
The third author was partially supported by the Italian National Group for Algebraic and Geometric Structures and their Applications (GNSAGA - INdAM).

    \end{document}